\colorlet{siaminlinkcolor}{green!50!black}
\colorlet{siamexlinkcolor}{blue!50!black}
\colorlet{siamreviewcolor}{black!50}
\begin{document}

\def\N{\mathbb{N}}
\def\R{\mathbb{R}}
\def\bc{{\rm{c}}}
\def\ba{{\rm a}}
\def\bb{{\rm b}}
\def\be{{\rm{e}}}
\def\bd{{\rm d}}
\def\bh{{\rm h}}
\def\bw{{\rm{w}}}
\def\bx{{\rm x}} 
\def\by{{\rm y}}
\def\bz{{\rm z}}
\def\bq{{\rm q}}
\def\ts{$\tau$-stationary point}
\def\P{\mathrm{Prox}_{\tau\lambda\|\cdot\|_0}}
\def\nzno{\rm{NL0R}}
\def\supp{\mathrm{supp}}

\newcommand{\qed}{\hphantom{.}\hfill $\Box$\medbreak}
\newcommand{\proof}{\noindent{\bf Proof \ }}
\newcommand{\sign}{{\mathrm{sgn}}}
\newcommand{\tzk}{$\mathbb{T}_{\tau}(\bfz^k;s)$}
\newcommand{\beq}{\begin{eqnarray}}
\newcommand{\eeq}{\end{eqnarray}}
\newcommand{\erhao}{\fontsize{8pt}{\baselineskip}\selectfont}

\numberwithin{equation}{section}
\newtheorem{theorem}{Theorem}[section]
\newtheorem{lemma}{Lemma}[section]
\newtheorem{proposition}{Proposition}[section]
\newtheorem{corollary}{Corollary}[section]
\newtheorem{definition}{Definition}[section]
\newtheorem{remark}{Remark}[section]
\newtheorem{example}{Example}[section]
\newtheorem{assumption}{Assumption}[section]
\newenvironment{cproof}
{\begin{proof}
 [Proof.]
 \vspace{-3.2\parsep}}
{\renewcommand{\qed}{\hfill $\Diamond$} \end{proof}}

\renewcommand{\theequation}{%\thesection.
\arabic{equation}}
\renewcommand{\thefootnote}{\fnsymbol{footnote}}

%%%%%%%%%%%%%%%%%%%%%%%%%%%%%%%%%%%%%%%%%%%%%%%%%%%%%%%%%%%%%%%%%%%%%%%%
%\pagestyle{fancy}
%%\fancyhf{}
%\fancyhead[LE,RO]{\leftmark}
%\fancyhead[RE,LO]{\rightmark}
%%\fancyfoot[CE,CO]{zhou}
%\fancyfoot[LE,RO]{Page \thepage}
%\rhead{Subspace Newton Method for Sample Reduction in SVM}
%\chead{}
%\pagestyle{headings}
%\rfoot{ \thepage}

\thispagestyle{empty}

\begin{center}
\topskip10mm
\LARGE{\bf Newton Method for $\ell_0$-Regularized Optimization}
\end{center}
\begin{center}
\setcounter{footnote}{0}
Shenglong Zhou\\ shenglong.zhou@soton.ac.uk\\
School of Mathematics, University of Southampton, UK\\%Southampton SO17 1BJ, UK\\
Lili Pan\\ panlili1979@163.com\\
Department of Mathematics, Shandong University of Technology,  China\\%Zibo 255049, \\
Naihua Xiu\\ nhxiu@bjtu.edu.cn\\
 Department of Applied Mathematics, Beijing Jiaotong University,  China\\%Beijing 100044, \\
%\renewcommand{\thefootnote}{\fnsymbol{footnote}}
%{\erhao
%\centering
%\begin{tabular}{ccc} 
%Shenglong Zhou& Lili Pan&Naihua Xiu\\
%School of Mathematics&Department of Mathematics&Department of Applied Mathematics\\
% University of Southampton&Shandong University of Technology&Beijing Jiaotong University\\
%\url{shenglong.zhou@soton.ac.uk}&\url{panlili1979@163.com}&\url{nhxiu@bjtu.edu.cn}
%\end{tabular}}
%\footnote{School of Mathematics, University of Southampton, Southampton SO17 1BJ, United Kingdom. \\Email: \url{shenglong.zhou@soton.ac.uk}\label{1}},
%Lili Pan\footnote{Department of Mathematics, Shandong University of Technology, Zibo 255049, People's Republic of  China. \\ Email: \url{panlili1979@163.com}} and
% Naihua Xiu\footnote{Department of Applied Mathematics, Beijing Jiaotong University, Beijing 100044, People's Republic of  China.  Email: \url{nhxiu@bjtu.edu.cn}}           
%, Lili Pan\footnote{Department of Mathematics, Shandong University of Technology, Zibo 255049, P. R. China},
%Naihua Xiu\footnote{Department of Applied Mathematics Beijing Jiaotong University, Beijing 100044, P. R. China.}%, \footref{1}
% \footnote{S. Zhou(), L. Pan(panlili1979@163.com), N. Xiu (nhxiu@bjtu.edu.cn).}\\
{\small
}
%e-mail: lxqin.echo@163.com, konglchen@126.com, nhxiu@bjtu.edu.cn
%Lili Pan (panlili1979@163.com), Houduo qi (hdqi@soton.ac.uk), Naihua Xiu (nhxiu@bjtu.edu.cn), Shenglong Zhou ( longnan\_zsl@163.com). Time: July 16, 2015.
\end{center}

\vskip12pt

\begin{abstract}\noindent  As a tractable approach, regularization is frequently adopted in sparse optimization. This gives rise to the regularized optimization, aiming at minimizing the $\ell_0$ norm or its continuous surrogates that characterize the sparsity.  From the continuity of surrogates to the discreteness of $\ell_0$ norm, the most challenging model is the  $\ell_0$-regularized optimization. To conquer this hardness,  there is a vast body of work on developing numerically effective methods. However, most of them only enjoy that either the (sub)sequence converges to a stationary point from the deterministic optimization perspective or the distance between each iterate and any given sparse reference point is bounded by an error bound in the sense of probability. In this paper, we develop a Newton-type method for the $\ell_0$-regularized optimization and prove that the generated sequence converges to a stationary point globally and quadratically under the standard assumptions, theoretically explaining that our method is able to perform surprisingly well.

\vspace{3mm}
\noindent{\bf \textbf{Keywords}:}
$\ell_0$-regularized optimization,  \ts,  Newton method, Global and quadratic convergence
 
\vspace{3mm}
\noindent{\bf \textbf{Mathematical Subject Classification}:}  65K05$\cdot$ 90C46$\cdot$ 90C06$\cdot$ 90C27 
\end{abstract}
{}
%\renewcommand{\contentsname}{\centering Contents} 
%\tableofcontents
\numberwithin{equation}{section}

\section{Introduction}
\noindent Over the last decade, sparsity has been thoroughly investigated due to its {extensive}
applications ranging from compressed sensing \cite{donoho2006compressed, candes2006robust, candes2005decoding}, signal and image processing \cite{elad2010role, elad2010sparse, chen2012non, bian2015linearly}, machine learning \cite{wright2010sparse, yuan2012visual}  to neural networks \cite{bian2012smoothing, lin2019toward, din2020sparsity} lately.
Sparsity is frequently characterized by $\ell_0$ norm  and its penalized problem is commonly phrased as $\ell_0$-regularized optimization, taking the form of
\begin{equation}\label{L0O}
\min_{\bx\in \mathbb{R}^n }~~ f(\bx) + \lambda\|\bx\|_0,
\end{equation}
where $f:\mathbb{R}^n\rightarrow\mathbb{R}$ is twice continuously differentiable and bounded from below, $\lambda>0$ is the penalty parameter and $\|\bx\|_0$ is $\ell_0$  norm of $\bx$, counting the number of non-zero elements of $\bx$.
%In order to solve the  non-convex  and non-smoothn problem \eqref{L0O}, a common approach is to take advantage of its sparse approximation which shares the form as
%\begin{equation}\label{app-L0O}
%\min_{\bx\in \mathbb{R}^n }~~ f(\bx) + \lambda p(\bx),
%\end{equation}
%where $p:\mathbb{R}^n\rightarrow\mathbb{R}$ is a continuous function, aiming at guaranteeing a sparse  solution.
%We say a solution is  sparse  if its has many zero elements.
Differing from the regularized optimization, another category of sparsity involved  {problems} that have been well studied is the so-called sparsity constrained optimization:
\begin{equation}\label{SCO}
\min_{\bx\in \mathbb{R}^n }~ f(\bx), ~~~{\rm s.t.}~ \|\bx\|_0\leq s,
\end{equation}
where $s\leq n$ is a given positive integer. Based on the two optimizations, large numbers of state-of-the-art methods  have been proposed  {in the last decade}. In particular, many of them are designed for a special application,  {compressed sensing} (CS), where the least squares are taken into account, namely
\begin{eqnarray}\label{cs}
 f(\bx) := f_{cs} (\bx) \equiv  \|A\bx-\by\|^2.
\end{eqnarray}
Here, $A\in\R^{m\times n}$ is the  sensing matrix and $\by\in\R^{m}$ is the measurement.
\subsection{Selective Literature Review}\label{subsec:lit-rev} 
\noindent Since there is a vast body of work developing numerical methods to solve the \eqref{SCO} or \eqref{L0O},  we present a brief overview of work that is able to clarify our motivations of this paper.

{\bf
(a) Methods for \eqref{SCO}} are known as greedy ones.  For the case of CS, one can refer to
 orthogonal matching \cite[OMP]{pati1993orthogonal,tropp2007signal}, gradient pursuit \cite[GP]{blumensath2008gradient},
compressive sample matching pursuit  \cite[CoSaMP]{needell2009cosamp}, subspace pursuit \cite[SP]{dai2009subspace}, normalized iterative hard-thresholding \cite[NIHT]{blumensath2010normalized}, hard-thresholding pursuit \cite[HTP]{foucart2011hard} and accelerated iterative hard-thresholding \cite[AIHT]{blumensath2012accelerated}.
Methods for the general model   \eqref{SCO}  include the gradient support pursuit \cite[GraSP]{bahmani2013greedy}, iterative hard-thresholding \cite[IHT]{beck2013sparsity}, Newton gradient pursuit \cite[NTGP]{yuan2014newton},  conjugate gradient iterative hard-thresholding \cite[CGIHT]{blanchard2015cgiht},
gradient hard-thresholding pursuit \cite[GraHTP]{yuan2017gradient}, improved iterate hard-thresholding  \cite[IIHT]{pan2017convergent} and  Newton hard-thresholding pursuit \cite[NHTP]{zhou2019global}.

To derive the convergence results, most methods enjoy the theory that the distance between each iterate to any given reference (sparse) point is bounded by an error through statistic analysis. By contrast, methods like IHT, IIHT and NHTP have been proved to converge to a stationary point globally in the sense of the deterministic way. Moreover, if Newton directions are interpolated into some methods, for example, CoSaMP, SP, GraSP, NTGP and GraHTP, then their demonstrated empirical  {performances are} extraordinary in terms of super-fast computational speed and high order of accuracy, but without  deterministic theoretical guarantees for a long time. Until recently,  authors in \cite{zhou2019global} first proved that their proposed NHTP has global and quadratic convergence properties, which unravel the reason  why these methods behave exceptionally well.

{\bf (b) Methods for \eqref{L0O}} aiming at addressing CS  {problem} via the model \eqref{L0O} include iterative hard-thresholding algorithm \cite[IHT]{blumensath2008iterative}, continuous exact $\ell_0$ penalty \cite[CEL0]{soubies2015continuous}, two methods: continuation single best replacement and
$\ell_0$-{regularization} path descent  in \cite[CSBR, L0BD]{soussen2015homotopy},
forward-backward splitting \cite[FBS]{attouch2013convergence},
 extrapolated proximal  {iterative hard-thresholding} algorithm \cite[EPIHT]{bao2016image} and
mixed integer optimization method \cite[MIO]{bertsimas2016best},
to name just a few.
While for the general problem   \eqref{L0O}, one can see penalty decomposition \cite[PD]{lu2013sparse} where equality and inequality constraints  {are} also considered,
iterative hard-thresholding \cite[see]{lu2014iterative} where the box and convex cone  {are} taken into account,
proximal gradient  method and coordinate-wise support optimality method \cite[PG, CowS]{beck2018proximal} where sparse solutions  {are} sought from a symmetric set, random proximal alternating minimization method \cite[RPA]{Patrascu2015A}, active set Barzilar-Borwein \cite[ABB]{Wanyou2019An} and {{a very recently smoothing proximal
gradient method  \cite[SPG]{bian2020smoothing}.}}
 Note that these methods can be regarded as the first-order methods since they only benefit from the first-order information such as  gradients or function values.  Then second-order methods have attracted much attention lately, including primal dual active set \cite[PDAS]{ito2013variational},
primal dual active set with continuation \cite[PDASC]{jiao2015primal} and
  support detection and root finding \cite[SDAR]{Huang2018Constructive}.

As for convergence results,  either error bounds are achieved for methods such as IHT, EPIHT, PDASC and SDAR, or a subsequence converges to a stationary point (which is a local convergence property) for methods like PD, PG and  ABB. It is worth mentioning that authors in \cite{attouch2013convergence}  {prove} that FBS converges to a critical point globally and {{authors \cite[SPG]{bian2020smoothing} also  {show} the global convergence to a relaxation problem of \eqref{L0O}}}. Apart from that, no better deterministic theoretical guarantees (like quadratic convergence) have been established on algorithms for solving \eqref{L0O}. Therefore, a natural question is: can we develop an algorithm based on $\ell_0$-regularized optimization that enjoys the global and quadratic convergence?
\subsection{Contributions} 
\noindent To answer the above question, we first introduce a \ts, an optimality condition of \eqref{L0O}, and then reveal its relationship with local/global minimizers by  { \cref{relation1}}.  It is known that a \ts\ is a necessary optimality condition by  \cite[Theorem 4.10]{beck2018proximal}. However, we show that it is also a sufficient condition under the assumption of strong convexity.

The \ts\ can be expressed as a stationary equation system \eqref{station}, and allows us to employ the Newton-type method dubbed as \nzno, an abbreviation for  Newton method for $\ell_0$-regularized optimization \eqref{L0O}. Differing from the classical Newton methods that are usually employed on continuous equation systems, the stationery equation system turns out to be discontinuous. Despite that, we succeed in establishing the global and quadratic convergence properties for \nzno\ under standard assumptions, see \cref{converge-rate}.    As far as we know, it is the first paper that establishes both properties for an algorithm aiming at solving the $\ell_0$-regularized optimization problem.

Finally, extensive numerical experiments are conducted in this article and demonstrate that \nzno\ is very competitive when benchmarked against a number of leading solvers for solving the compressed sensing and sparse complementarity problems. In a nutshell, it is capable of delivering relatively accurate sparse solutions with fast computational speed.

It is worth mentioning that, PDASC, SDAR and NHTP also adopt the idea of the \ts. The former two always set $\tau=1$, while similar to NHTP, \nzno\ benefits from more choices of $\tau$. In addition,  the gradient direction and Amijio-type rule of updating the step size are integrated. Those strategies are alternatives if the Newton direction does not guarantee a sufficient decline of the objective function values during the process. By contrast, PDASC and SDAR only take advantage of the Newton directions with unit step sizes. Therefore, they are hard to establish the global convergence results.  Now, for the method NHTP  aiming at tackling  \eqref{SCO},   the sparsity level $s$ is required, but is usually unknown and somehow decides the quality of the final solutions. In \eqref{L0O}, the parameter $\lambda$ also plays an important role in pursuing sparse solutions. We will show that $\lambda$ is able to be set up in a proper range and the proposed method  \nzno\ could effectively tune it adaptively in numerical experiments.

\subsection{Organization and Notation}
\noindent The rest of the paper is organized as follows. Next section establishes the optimality conditions of \eqref{L0O}  {with the help of} the \ts\ whose relationship with the local/global minimizers of \eqref{L0O} by  \Cref{relation1} is also given. In \Cref{Sect:Newton-Method-BLS}, we design the  { Newton-type method} for the $\ell_0$-regularized optimization (\nzno), followed by the main convergence results including the support set identification, global and quadratic convergence properties under some standard assumptions. Extensive numerical experiments are presented in  \Cref{Sec: numerical exp}, where the implementation of \nzno\ as well as its comparisons with some other excellent solvers for solving problems, such as compressed sensing and sparse complementarity problems, are provided. Concluding remarks are made in the last section.

We end this section with some notation to be employed throughout the paper. Let $\N_n:=\{ 1,2,\cdots,n\}$. Given a vector $\bx$, let $|\bx|:=(|x_1,|x_2|,\cdots,|x_n|)^\top,$   $\|\bx\|^2:= \sum_ix_i^2$  be its  $\ell_2$ norm. 
The support set of $\bx$ is $\supp(\bx)$ consisting of indices of its non-zero elements.  Given a set $T\subseteq  \N_n$,  $|T|$ and $\overline{T}$ are the cardinality and the complementary set. The sub-vector of $\bx$  containing elements  indexed on $T$ is denoted by $\bx_T\in\mathbb{R}^{|T|}$. Next, $\lceil a\rceil$ stands for the smallest integer that is no less than $a$. Now, for a matrix $A\in\mathbb{R}^{m\times n}$, let $\|A\|_2$ represent  its spectral norm, i.e., its maximum singular value. Write $A_{T,J}$ is the sub-matrix containing rows indexed on ${T}$ and columns indexed on $J$. In particular, denote the sub-gradient and  sub-Hessians by
\begin{eqnarray*}
\nabla_T f(\bx)&:=&(\nabla f(\bx))_{T},~~~~~~~\nabla_T^2f(\bx)~:=(\nabla^2f(\bx))_{T,T},\\
\nabla_{T,J}^2f(\bx)&:=&(\nabla^2f(\bx))_{T,J},~~~~\nabla_{T:}^2f(\bx):=(\nabla^2f(\bx))_{T,\N_n}.
\end{eqnarray*}
\section{Optimality} \label{Sect:Optimality}
\noindent Some necessary optimality conditions of  \eqref{L0O}  have been studied. These include ones in \cite[Theorem 2.1]{lu2013sparse} and \cite[Theorem 4.10]{beck2018proximal}. Here, inspired by the latter, we introduce a $\tau$-stationary point (this is the same as the $L$-stationarity in \cite{beck2018proximal}).
\subsection{\ts}
%\begin{definition}\label{tau-sta}
\noindent  A vector $\bx \in \R^n $ is called a \ts\ of  \eqref{L0O}
if there is a $\tau> 0$ such that
\begin{eqnarray}
\label{tau-eq}
 \bx &\in &\P\left(\bx-\tau \nabla  f (\bx)\right):=\underset{\bz\in\R^n}{\rm argmin} ~\frac{1}{2} \|\bz-(\bx-\tau \nabla  f (\bx))\|^2 + \tau\lambda\|\bz\|_0. 
\end{eqnarray}
%\end{definition}
It follows from \cite{attouch2013convergence} that the operator $\P(\bz)$ takes a closed form as
\begin{eqnarray}\label{tau-sta-exp}
\Big[\P\left(\bz\right)\Big]_i =\left\{\begin{array}{ll}
z_i,&~~ |z_i|>\sqrt{2\tau\lambda},\\
\{z_i,0\},&~~ |z_i|=\sqrt{2\tau\lambda},\\
0,&~~ |z_i|<\sqrt{2\tau\lambda}.
\end{array}\right.
\end{eqnarray}
This allows us to characterize a \ts\ by conditions below equivalently, see \cite[ Theorem 24]{tropp2006just}  and \cite[Lemma 2]{blumensath2008iterative}.
\begin{lemma}\label{sts}A point $\bx$ is a \ts\ with $ \tau >0$ of  \eqref{L0O} if and only if
\begin{eqnarray}\label{tau-sta-cond}
 \begin{cases}
\nabla_i f(\bx) = 0~{\rm and}~|x_i|\geq\sqrt{2\tau\lambda},&  i\in\supp(\bx),\\
|\nabla_i f(\bx)| \leq\sqrt{2\lambda/\tau},&  i\notin\supp(\bx).
\end{cases} 
\end{eqnarray}
\end{lemma}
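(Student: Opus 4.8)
The plan is to exploit the componentwise closed form \eqref{tau-sta-exp} of the proximal operator and to read off the defining inclusion \eqref{tau-eq} coordinate by coordinate. Writing $\bz := \bx-\tau\nabla f(\bx)$ so that $z_i = x_i-\tau\nabla_i f(\bx)$, the inclusion $\bx\in\P(\bz)$ is equivalent to requiring $x_i\in[\P(\bz)]_i$ for every $i\in\N_n$. I would then split the analysis according to whether $i\in\supp(\bx)$ (i.e. $x_i\neq 0$) or $i\notin\supp(\bx)$ (i.e. $x_i=0$), and establish both implications within each case.

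For $i\in\supp(\bx)$: since $x_i\neq 0$, the last branch of \eqref{tau-sta-exp} is excluded, so the inclusion forces $x_i=z_i$ together with $|z_i|\geq\sqrt{2\tau\lambda}$. Substituting $z_i=x_i-\tau\nabla_i f(\bx)$ into $x_i=z_i$ yields $\tau\nabla_i f(\bx)=0$, hence $\nabla_i f(\bx)=0$ because $\tau>0$, and then $|x_i|=|z_i|\geq\sqrt{2\tau\lambda}$, which is exactly the first line of \eqref{tau-sta-cond}. Conversely, if $\nabla_i f(\bx)=0$ and $|x_i|\geq\sqrt{2\tau\lambda}$, then $z_i=x_i$ with $|z_i|\geq\sqrt{2\tau\lambda}$, and \eqref{tau-sta-exp} gives $[\P(\bz)]_i=z_i=x_i$ (with $x_i$ an admissible value when $|z_i|=\sqrt{2\tau\lambda}$), so the inclusion holds at $i$.

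For $i\notin\supp(\bx)$: here $x_i=0$ and $z_i=-\tau\nabla_i f(\bx)$. The inclusion $0\in[\P(\bz)]_i$ excludes the first branch of \eqref{tau-sta-exp}, hence forces $|z_i|\leq\sqrt{2\tau\lambda}$, that is $\tau|\nabla_i f(\bx)|\leq\sqrt{2\tau\lambda}$. Dividing by $\tau$ and using the identity $\sqrt{2\tau\lambda}/\tau=\sqrt{2\lambda/\tau}$ produces the second line of \eqref{tau-sta-cond}; the converse runs in reverse, since $|\nabla_i f(\bx)|\leq\sqrt{2\lambda/\tau}$ gives $|z_i|\leq\sqrt{2\tau\lambda}$ and therefore $0\in[\P(\bz)]_i$ by the last two branches. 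Assembling the two cases over all coordinates yields the claimed equivalence.

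The argument is essentially bookkeeping, so I do not anticipate a serious obstacle; the one point that needs care is the boundary case $|z_i|=\sqrt{2\tau\lambda}$, where $\P$ is set-valued with $[\P(\bz)]_i=\{z_i,0\}$. In the forward direction one must use that $x_i\neq 0$ (resp. $x_i=0$) to select the correct element of this two-point set, and in the converse direction one must verify that the prescribed value of $x_i$ is indeed one of the two admissible outcomes. Keeping the inequalities non-strict ($\geq$ and $\leq$) throughout is precisely what makes both inclusions go through at this boundary.
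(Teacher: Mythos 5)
Your proof is correct and is exactly the argument the paper intends: the paper states \cref{sts} without proof, attributing it to the cited references, but its justification is precisely the componentwise reading of the closed form \eqref{tau-sta-exp} that you carry out, including the careful handling of the set-valued boundary case $|z_i|=\sqrt{2\tau\lambda}$ that accounts for the non-strict inequalities in \eqref{tau-sta-cond}. Nothing is missing; the separability of the proximal problem in \eqref{tau-eq}, which you invoke to reduce the inclusion to coordinates, is the only implicit ingredient and it is immediate.
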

From \Cref{sts}, for any $0<\tau_1\leq \tau$,  a \ts\ $\bx$   is also a $\tau_1$-stationary point due to  $ 2\tau\lambda \geq 2\tau_1\lambda $ and $ 2\lambda/\tau \leq 2\lambda/\tau_1 $. Our next major result needs the strong smoothness and convexity of $f$.
\begin{definition}A function $f$ is  strongly smooth with a constant $L>0$ if
\begin{equation}\label{strong-smooth}
f(\bz)\leq f(\bx)+\langle \nabla f(\bx), \bz-\bx\rangle+({L}/{2})\|\bz-\bx\|^2, ~\forall~ \bx,\bz\in\R^n.\end{equation}
A function $f$ is strongly convex with a constant $\ell >0$ if
\begin{equation}\label{strong-convexity}
f(\bz)\geq f(\bx)+\langle \nabla f(\bx), \bz-\bx\rangle+({\ell }/{2})\|\bz-\bx\|^2,~ \forall~ \bx,\bz\in\R^n.\end{equation}
We say  a function $f$ is locally strongly convex with a constant $\ell >0$ around $\bx$ if \eqref{strong-convexity} holds for any point  $\bz$ in the neighbourhood of $\bx$.\end{definition} Something needs emphasize here is that when the function is locally strongly convex, the constant $\ell $ depends on the point $\bx$. We drop the dependence for simplicity since it would not cause confusion in the context. The strong convexity and  smoothness respectively indicate that, for any $\bx,\bz\in\R^n$
\begin{eqnarray}
\label{strong-convexity-grad}
%\|\nabla f(\bz) - \nabla f(\bx) \| \geq \ell \|\bz-\bx\|,~~~~~~
\ell \|\bz-\bx\|\leq\|\nabla f(\bz) - \nabla f(\bx)\|   \leq L\|\bz-\bx\|.
\end{eqnarray}
\subsection{First order optimality conditions}
\noindent Our next major result is to establish the relationships between a \ts\  and a local/global  minimizer of \eqref{L0O}.
\begin{theorem}\label{relation1}
For problem \eqref{L0O}, the following results hold.
\begin{itemize}
\item[{\rm 1)} ] {\bf (Necessity)} A global minimizer $\bx^*$ is also a \ts\ for any $0<\tau< 1/L$ if $f$ is strongly smooth with $L>0$. Moreover,
\begin{equation}\label{strong-sta-eq}
\bx^*=\P\left(\bx^*-\tau \nabla  f (\bx^*)\right).
\end{equation}
\item[{\rm 2)}] {\bf(Sufficiency)}
A \ts\ with $\tau >0 $ is a local minimizer if  $f$ is convex. Furthermore,
a \ts\ with $\tau(>)\geq1/\ell  $ is also a (unique) global minimizer if  $f$ is strongly convex  with $\ell >0$.
\end{itemize}
\end{theorem}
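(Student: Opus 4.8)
The plan is to treat the three assertions of \Cref{relation1} in turn, in each case translating optimality into the coordinatewise description of a \ts\ furnished by \Cref{sts} and then exploiting the quadratic estimates coming from strong smoothness \eqref{strong-smooth} and strong convexity \eqref{strong-convexity}.

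For the necessity in part~1) I would majorize the objective at the global minimizer $\bx^*$ by $g(\bz):=f(\bx^*)+\langle\nabla f(\bx^*),\bz-\bx^*\rangle+\tfrac{1}{2\tau}\|\bz-\bx^*\|^2+\lambda\|\bz\|_0$. Because $\tau<1/L$, the bound \eqref{strong-smooth} gives $f(\bz)+\lambda\|\bz\|_0<g(\bz)$ for every $\bz\neq\bx^*$, with equality at $\bz=\bx^*$; completing the square identifies the minimizer of $g$ with $\P(\bx^*-\tau\nabla f(\bx^*))$. Were some minimizer $\bz^+$ of $g$ distinct from $\bx^*$, then $f(\bz^+)+\lambda\|\bz^+\|_0<g(\bz^+)\le g(\bx^*)=f(\bx^*)+\lambda\|\bx^*\|_0$ would contradict global minimality; hence the minimizer is the singleton $\bx^*$, which is exactly \eqref{strong-sta-eq} and therefore certifies that $\bx^*$ is a \ts.

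For the local minimizer statement in part~2) I would set $\bx=\bx^*+\bd$, $T:=\supp(\bx^*)$ and $S:=\{i\notin T:\ d_i\neq0\}$. For $\|\bd\|_\infty$ small every coordinate of $T$ stays nonzero, so $\|\bx\|_0=\|\bx^*\|_0+|S|$. Convexity gives $f(\bx)-f(\bx^*)\ge\langle\nabla f(\bx^*),\bd\rangle$, whose $T$-block vanishes since $\nabla_if(\bx^*)=0$ on $T$ by \Cref{sts}, leaving only $\sum_{i\in S}\nabla_if(\bx^*)d_i\ge-\sqrt{2\lambda/\tau}\sum_{i\in S}|d_i|$. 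The objective gap is then at least $\sum_{i\in S}(\lambda-\sqrt{2\lambda/\tau}\,|d_i|)$, which is nonnegative as soon as $|d_i|<\sqrt{\lambda\tau/2}$; choosing the neighbourhood radius below $\min\{\sqrt{\lambda\tau/2},\ \min_{i\in T}|x^*_i|\}$ (the second bound also preserving $\supp$) completes this part.

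For the global minimizer claim I would compare $\bx^*$ with an arbitrary $\bx$, write $\bd=\bx-\bx^*$, $T'=\supp(\bx)$, and apply \eqref{strong-convexity} to get $f(\bx)-f(\bx^*)\ge\langle\nabla f(\bx^*),\bd\rangle+\tfrac{\ell}{2}\|\bd\|^2$. After discarding the $T$-block of the inner product, I would regroup the whole objective gap coordinatewise into the three sums $\sum_{i\in T'\setminus T}(\nabla_if(\bx^*)x_i+\tfrac{\ell}{2}x_i^2+\lambda)$, $\sum_{i\in T\setminus T'}(\tfrac{\ell}{2}(x^*_i)^2-\lambda)$ and $\sum_{i\in T\cap T'}\tfrac{\ell}{2}d_i^2$. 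The last sum is trivially nonnegative; the middle one is nonnegative because $|x^*_i|\ge\sqrt{2\tau\lambda}$ together with $\ell\tau\ge1$; and each summand of the first, read as a quadratic in $|x_i|$ with leading coefficient $\ell/2$, is nonnegative precisely when its discriminant $2\lambda(1/\tau-\ell)$ is nonpositive, i.e. when $\tau\ge1/\ell$. Adding the three sums yields $f(\bx)+\lambda\|\bx\|_0\ge f(\bx^*)+\lambda\|\bx^*\|_0$, and taking $\tau>1/\ell$ makes every summand strictly positive unless $\bx=\bx^*$, which gives uniqueness. I expect the main obstacle to be exactly this bookkeeping over a support $T'$ that may differ arbitrarily from $T$, together with extracting the sharp threshold $\tau\ge1/\ell$ (resp.\ $\tau>1/\ell$) from the discriminant rather than from a cruder per-coordinate estimate.
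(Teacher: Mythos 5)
Your proposal is correct, and while parts of it track the paper's proof, the global-minimizer argument takes a genuinely different route. For part 1) your majorization function $g(\bz)=f(\bx^*)+\langle\nabla f(\bx^*),\bz-\bx^*\rangle+\tfrac{1}{2\tau}\|\bz-\bx^*\|^2+\lambda\|\bz\|_0$ is exactly the paper's chain of inequalities repackaged: the paper takes an arbitrary $\bz\in\P\left(\bx^*-\tau\nabla f(\bx^*)\right)$, completes the square, invokes prox-optimality and global minimality, and the slack $\mu=L-1/\tau<0$ forces $\bz=\bx^*$; your strict majorization plays the role of $\mu<0$, and both arguments yield the singleton claim \eqref{strong-sta-eq}. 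For the local-minimizer claim your argument is the same in spirit (convexity plus the bound $|\nabla_i f(\bx^*)|\leq\sqrt{2\lambda/\tau}$ off the support from \cref{sts}), but your per-coordinate estimate gives the radius $\sqrt{\lambda\tau/2}$ in $\ell_\infty$, whereas the paper applies Cauchy--Schwarz to the whole complement and settles for the $\sqrt{n}$-degraded radius $\sqrt{\tau\lambda/(2n)}$ in $\ell_2$; your bound is sharper, though the paper's explicit case split at $\bx^*=0$ is worth mirroring (your convention $\min_{i\in\emptyset}|x_i^*|=+\infty$ handles it). The real divergence is the global claim: the paper never decomposes by support. It re-derives the variational inequality \eqref{projected-1} directly from the prox definition of a \ts\ and adds it to strong convexity, so the single term $\tfrac{\ell-1/\tau}{2}\|\bx-\bx^*\|^2$ settles nonnegativity (and strictness for $\tau>1/\ell$) in three lines. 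Your alternative splits the gap over $T'\setminus T$, $T\setminus T'$ and $T\cap T'$ and extracts the threshold $\tau\geq 1/\ell$ twice --- from $|x_i^*|\geq\sqrt{2\tau\lambda}$ in the middle sum and from the discriminant $2\lambda(1/\tau-\ell)\leq 0$ in the first --- using only the coordinatewise characterization of \cref{sts} rather than the prox inequality. I verified your decomposition is an exact accounting of $\langle\nabla f(\bx^*),\bd\rangle+\tfrac{\ell}{2}\|\bd\|^2+\lambda(\|\bx\|_0-\|\bx^*\|_0)$, and your uniqueness logic is sound (if $\bx\neq\bx^*$ with $\tau>1/\ell$, whichever of the three sums is nonempty contributes strictly, the $T\cap T'$ case needing no threshold at all). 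What the paper's route buys is brevity and the fact that it uses the \ts\ property as a single global inequality; what yours buys is transparency about where the threshold $1/\ell$ comes from coordinatewise, at the cost of the support bookkeeping you anticipated.
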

\begin{proof}
1) Denote $\mathbb{P}:=\P\left(\bx^*-\tau \nabla  f (\bx^*)\right)$ and $\mu:=L-1/\tau<0$ due to $0<\tau< 1/L$.  Let $\bx^*$ be a  global minimizer and consider any point $\bz\in\mathbb{P}$. Then we have   
\begin{eqnarray*}
&& 2f (\bz) +2\lambda\|\bz\|_0\\
 & {\leq} &  2f (\bx^*)+2\langle \nabla  f (\bx^*),\bz-\bx^* \rangle +  L \|\bz-\bx^*\|^2 +2\lambda\|\bz\|_0\\
&=&  2f (\bx^*)+2\langle \nabla  f (\bx^*),\bz-\bx^* \rangle +  ({1}/{\tau})  \|\bz-\bx^*\|^2 +   {\mu} \|\bz-\bx^*\|^2+ 2\lambda\|\bz\|_0\\
&=&  2f (\bx^*)+ ({1}/{ \tau})  \|\bz-(\bx^*-\tau \nabla  f (\bx^*))\|^2  -  {\tau}  \|\nabla  f (\bx^*)\|^2+ 2\lambda\|\bz\|_0+   {\mu}  \|\bz-\bx^*\|^2\\
& {\leq} &  2f (\bx^*)+  ({1}/{\tau})  \|\bx^*-(\bx^*-\tau \nabla  f (\bx^*))\|^2 + 2\lambda\|\bx^*\|_0-  {\tau}  \|\nabla  f (\bx^*)\|^2+   {\mu}  \|\bz-\bx^*\|^2\\
&=&  2f (\bx^*)+2\lambda\|\bx^*\|_0+   {\mu} \|\bz-\bx^*\|^2\\
 & {\leq}&   2f (\bz)+2\lambda\|\bz\|_0 +  {\mu}  \|\bz-\bx^*\|^2,
\end{eqnarray*}
where the first, second and third inequalities hold respectively from the facts that $f$ being strongly smooth,
$\bz\in\mathbb{P}$
and $\bx^*$ being the global minimizer of \eqref{L0O}.
This together  with $\mu<0$ leads to
$0\leq (\mu/2) \|\bz-\bx^*\|^2<0,$
which yields $\bz=\bx^*$. Therefore, $\bx^*$ is a \ts\ of \eqref{L0O}. Since $\bz$ is arbitrary in $\mathbb{P}$ and $\bz=\bx^*$,  $\mathbb{P}$ is a singleton only containing  $\bx^*$.

2) Let $\bx^*$ be a \ts\ with $\tau>0$ with $T_*:=\supp(\bx^*)$. Consider a neighbour region of $\bx^*$ as $N(\bx^*)=\left\{\bx\in\R^n: \|\bx-\bx^*\|< \epsilon_*\right\}$, where
\begin{eqnarray*}\epsilon_*:= 
\begin{cases}
\min\left\{\min_{i\in T_*} |x^*_i|,\sqrt{\tau\lambda/(2n)}\right\},& \bx^*\neq 0,\\
\sqrt{\tau\lambda/(2n)},& \bx^*=0.
\end{cases}  \end{eqnarray*}
For any point $\bx\in N(\bx^*)$, we conclude $T_* \subseteq \supp(\bx)$. In fact, this is true when $\bx^*=0$. When $\bx^*\neq0$, if there is a $j$ such that $j\in T_*$ but $j\notin \supp(\bx) $, then we derive a contradiction:
 $$\epsilon_* \leq \min_{i\in T_*} |x^*_i| \leq |x_j^*|=|x_j^*- x_j|\leq \|\bx-\bx^*\| < \epsilon_*. $$
Therefore, we have  $T_* \subseteq \supp(\bx)$. The convexity of $f$ suffices to
\begin{eqnarray}
 f (\bx)-f (\bx^*) &\geq &  \langle \nabla  f (\bx^*),\bx-\bx^* \rangle \nonumber\\
 &= &   \langle \nabla_{T_*}  f (\bx^*),(\bx-\bx^*)_{T_*}  \rangle +\langle \nabla_{\overline T_*}  f (\bx^*),(\bx-\bx^*)_{\overline T_*}  \rangle\nonumber\\
\label{ffp}&\overset{\eqref{tau-sta-cond}}{=}&
 \langle \nabla_{\overline T_*}  f (\bx^*),\bx_{\overline T_*}  \rangle =:  \phi.
\end{eqnarray}
If  $T_* =\supp(\bx)$, then $\phi=0$ due to $\bx_{\overline T_*}=0$ and $\|\bx^*\|_0=\|\bx\|_0$. These allow us to derive that
\begin{eqnarray*}
 f (\bx) +\lambda\|\bx\|_0  \overset{\eqref{ffp}}{\geq}     f (\bx^*) + \phi +\lambda\|\bx\|_0 = f (\bx^*) + \lambda\|\bx^*\|_0.
\end{eqnarray*}
If  $T_* \subseteq (\neq) \supp(\bx)$, then $\|\bx\|_0-1\geq\|\bx^*\|_0$. In addition,
\begin{eqnarray*}\phi&= &\langle \nabla_{\overline T_*}  f (\bx^*),\bx_{\overline T_*}  \rangle  \geq -\|\nabla_{\overline T_*}  f (\bx^*)\|\|\bx_{\overline T_*} \|\\
 &\overset{\eqref{tau-sta-cond}}{\geq}& - \sqrt{|\overline T_*|2\lambda/\tau} \|\bx_{\overline T_*} -\bx_{\overline T_*}^*\| \geq  -  \sqrt{n2\lambda/\tau} \epsilon_*>-\lambda.
\end{eqnarray*}
These facts enable us to derive that
\begin{eqnarray*}
 f (\bx) +\lambda\|\bx\|_0 &\overset{\eqref{ffp}}{\geq} &    f (\bx^*) + \phi +\lambda\|\bx\|_0 \\
 &>& f (\bx^*) +\lambda \|\bx\|_0-  \lambda\\
 &\geq& f (\bx^*) +\lambda\|\bx^*\|_0.
\end{eqnarray*}
Both cases show  the local optimality of $\bx^*$ in the region $N(\bx^*)$. Again, it follows from $\bx^*$ being a \ts\ with $\tau>0$ that
$$({1}/{2})\|\bx-(\bx^*-\tau \nabla  f (\bx^*))\|^2 +\tau\lambda\|\bx\|_0 \geq({1}/{2})\|\bx^*-(\bx^*-\tau \nabla  f (\bx^*))\|^2 +\tau\lambda\|\bx^*\|_0,$$
for any $\bx\in \R^n$, which suffices to
\begin{equation}\label{projected-1}
\langle \nabla  f (\bx^*), \ \bx-\bx^* \rangle +\lambda\|\bx\|_0\geq -({1}/({2\tau}))\|\bx-\bx^*\|^2+\lambda\|\bx^*\|_0 .
\end{equation}
Since $f$ is strongly convex, for any $\bx \neq\bx^*$, we have
 \begin{eqnarray*}
 f (\bx)+\lambda\|\bx\|_0&\overset{\eqref{strong-convexity}}{\geq} &  f (\bx^*)+\langle \nabla  f (\bx^*),\bx-\bx^* \rangle+({\ell }/{2})\|\bx-\bx^*\|^2+\lambda\|\bx\|_0\\
&\overset{\eqref{projected-1}}{\geq}&
f (\bx^*)+(({\ell  -1/\tau})/{2})\|\bx-\bx^*\|^2+\lambda\|\bx^*\|_0\\  
 &\geq&   f (\bx^*)+\lambda\|\bx^*\|_0,
\end{eqnarray*}
where the last inequality is from  $\tau\geq1/\ell $. Clearly, if $\tau>1/\ell $, then the last inequality holds strictly, which means $\bx^*$ is a unique global minimizer. \qed \end{proof}
\noindent Let us consider an example to illustrate the above theorem.
\begin{example}Let $\ba=(t~1~1)^\top$, $\lambda>                                                                                                                                                                                                                                                                                                                                                                                                                                                                                                                                                                                                                                                                                                                                                                                                                                                                                                                                                                                                                                                                                                                                                                                                                                                                                                                                                                                                                                                                                                                                                                                                                                                                                                                                                                                                                                                                                                                                                                                                                                                                                                                                                                                                                                                                                                                                                                                                                                                                                                                                                                                                                                                                                                                                                                                                                                                                                                                                                                                                                                                                                                                                                                                                                                                                                                                                                                                                                                                                                                                                                                                                                                                                                                                                                                                                                                                                                                                                                                                                                                                                                                                                                                                                      8$ and $f$ be given by
\begin{equation}\label{ex-2}
f(\bx):= \frac{1}{2}(\bx-\ba)^\top\left[\begin{array}{rrr}
2&0&0\\
0&3&1\\
0&1&3\\
\end{array}\right](\bx-\ba).
\end{equation}
\end{example}
It is easy to verify that $f$ is strongly smooth with $L=2$ and also strongly convex with $\ell =1$. Consider a point $\bx^*=(t~0~0)^\top$ with $t \geq\lambda/2$. We can conclude that $\bx^*$ is a global minimizer of \eqref{L0O}. In fact, $\nabla f(\bx^*)=(0~-4~-4)^\top$ and  $\bx^*-\tau \nabla f(\bx^*)=(t~4\tau~4\tau)^\top$. This and \eqref{tau-sta-cond} show that $\bx^*$ is a $\tau$-stationary point for some $\tau\in(1,\lambda/8]$ due to
\begin{eqnarray*}
\nabla_1 f(\bx^*) = 0~{\rm and}~|x_1|= t\geq\lambda/2=\sqrt{2\lambda \lambda/8}\geq\sqrt{2\lambda\tau},\\
|\nabla_2 f(\bx^*)|=|\nabla_3 f(\bx^*)| =4=\sqrt{2\times8}\leq\sqrt{2\lambda/\tau}.
\end{eqnarray*}
Then it follows from \Cref{relation1} 2) and $\tau>1=1/\ell $ that $\bx^*$ is a unique global minimizer of the problem \eqref{L0O}. Moreover, \Cref{relation1} 1) concludes that a global minimizer (which is $\bx^*$) is also a  $\tau_1$-stationary point with $\tau_1\in(0,1/L)=(0,1/2)$. This is not conflicted with $\bx^*$  being a  $\tau$-stationary point with some $\tau\in(1,\lambda/8]$.

\subsection{Stationary Equation}% \label{Sect:Stationary-Equation}
\noindent To well express the solution of  \eqref{tau-eq}, define
\begin{eqnarray}\label{T}
 T:=T_{\tau}(\bx,\lambda) &:=& \{i\in \N_n:|x_{i}-\tau \nabla_{i}  f (\bx)|\geq\sqrt{2\tau\lambda} \}.
 \end{eqnarray}
Based on above set, we introduce the following \textit{stationary equation}
\begin{eqnarray}\label{station}
F_{\tau}(\bx;T):=
 \left[\begin{array}{c}
                          \nabla_{ T}  f (\bx)\\
                          \bx_{\overline{ T }}\\
                         \end{array}\right]=
                         0.
\end{eqnarray}
 The relationship between \eqref{tau-eq}  and \eqref{station} is revealed by the following theorem.
\begin{theorem}\label{tau-F} For any $\bx \in \R^n $, by letting $\bz:=\bx-\tau \nabla  f (\bx)$,
we have
$$\bx=\P\left(\bz\right) ~~~\Longrightarrow~~~ F_{\tau}(\bx;T)=0 ~~~\Longrightarrow~~~ \bx\in\P\left(\bz\right).$$
\end{theorem}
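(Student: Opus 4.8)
The plan is to argue componentwise, relying entirely on the closed-form expression \eqref{tau-sta-exp} for $\P$ together with the definition \eqref{T} of the index set $T$. Writing $\bz=\bx-\tau\nabla f(\bx)$ as in the statement, the point to exploit is that membership of $i$ in $T$ is governed by exactly the same trichotomy $|z_i|>\sqrt{2\tau\lambda}$, $|z_i|=\sqrt{2\tau\lambda}$, $|z_i|<\sqrt{2\tau\lambda}$ that drives \eqref{tau-sta-exp}. So each implication collapses to a case analysis over these three alternatives at every coordinate, and no genuine computation is needed beyond reading off \eqref{tau-sta-exp}.

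For the first implication I would read $\bx=\P(\bz)$ as the set $\P(\bz)$ being the singleton $\{\bx\}$, which is the convention already used in the proof of \Cref{relation1} 1) (``$\mathbb{P}$ is a singleton''). The first step is to rule out boundary indices: if some $i$ had $|z_i|=\sqrt{2\tau\lambda}$, then \eqref{tau-sta-exp} gives $[\P(\bz)]_i=\{z_i,0\}$ with $z_i\neq0$ (since $\tau,\lambda>0$), so $\P(\bz)$ could not be a singleton, a contradiction. Hence every $i$ falls into a strict case. For $i\in T$ one has $|z_i|>\sqrt{2\tau\lambda}$, so \eqref{tau-sta-exp} forces $x_i=z_i=x_i-\tau\nabla_i f(\bx)$, giving $\nabla_i f(\bx)=0$; thus $\nabla_T f(\bx)=0$. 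For $i\in\overline{T}$ one has $|z_i|<\sqrt{2\tau\lambda}$, so \eqref{tau-sta-exp} forces $x_i=0$, giving $\bx_{\overline{T}}=0$. Together these are precisely $F_{\tau}(\bx;T)=0$ as in \eqref{station}.

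For the second implication I would assume $F_{\tau}(\bx;T)=0$, i.e. $\nabla_T f(\bx)=0$ and $\bx_{\overline{T}}=0$, and verify $x_i\in[\P(\bz)]_i$ at each $i$. For $i\in T$, $\nabla_i f(\bx)=0$ yields $z_i=x_i$; if $|z_i|>\sqrt{2\tau\lambda}$ then $[\P(\bz)]_i=z_i=x_i$, and if $|z_i|=\sqrt{2\tau\lambda}$ then $[\P(\bz)]_i=\{z_i,0\}\ni z_i=x_i$. For $i\in\overline{T}$, $x_i=0$ together with $|z_i|<\sqrt{2\tau\lambda}$ gives $[\P(\bz)]_i=0=x_i$. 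Hence $\bx\in\P(\bz)$.

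The main obstacle is conceptual rather than computational: it is the boundary case $|z_i|=\sqrt{2\tau\lambda}$, where $\P$ is genuinely set-valued. This is exactly why the two implications cannot be merged into an equivalence. The strict equality $\bx=\P(\bz)$ must exclude boundary indices (otherwise $\P(\bz)$ is not a singleton), whereas $F_{\tau}(\bx;T)=0$ tolerates a boundary index $i\in T$ with $x_i=z_i$ and therefore only guarantees the membership $\bx\in\P(\bz)$, not set equality. Keeping the distinction between $=$ and $\in$ consistent through the case analysis is the only delicate part of the argument.
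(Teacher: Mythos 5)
Your proof is correct and takes essentially the same route as the paper: your first implication is exactly the paper's argument (singleton forces no boundary index with $|z_i|=\sqrt{2\tau\lambda}$, hence $\bx_T=\bz_T$ and $\bx_{\overline T}=0$). For the second implication you verify $x_i\in\big[\P(\bz)\big]_i$ coordinatewise directly from \eqref{tau-sta-exp}, whereas the paper routes the identical case analysis through \Cref{sts}; the two are interchangeable, and your closing remark on why the boundary case blocks an equivalence is accurate.
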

 \begin{proof} If we have $\bx=\P\left(\bz\right)$, namely, $\P\left(\bz\right)$ is a singleton, then there is no index $i\in T$ such that $|z_i|=\sqrt{2\tau\lambda}$ by \eqref{tau-sta-exp}. This and \eqref{tau-sta-exp} give  rise to $(\P\left(\bz\right))_{T}=\bz_T$. As  a consequence,
 $$0=\bx-\P(\bz)\overset{\eqref{tau-sta-exp}}{=} \left[\begin{array}{c}
\bx_{ T}\\
\bx_{ \overline{T}}
\end{array}\right]
-\left[\begin{array}{c}
\bz_{ T}\\
0
\end{array}\right]
=
\left[\begin{array}{c}
\tau  \nabla_{ T}  f (\bx)  \\
\bx_{ \overline{T}}
\end{array}\right],$$
which suffices to $F_{\tau}(\bx;T)=0$. We now prove the second claim.
For any $ i\in T$, we have $\nabla_{ i}  f (\bx) =0$ from  \eqref{station} and thus $|x_i|\geq \sqrt{2\tau\lambda}$
  from \eqref{T}. For any $i\in \overline T$,
 we have $x_i=0$ from  \eqref{station}  and
 $|\tau \nabla_{i}  f (\bx)|=|x_{i}-\tau \nabla_{i}  f (\bx)|<\sqrt{2\tau\lambda}$  from \eqref{T}. Those together with \Cref{sts} claim the conclusion immediately.\qed
\end{proof}
\begin{remark}\label{lambda-upper-lower}  Note that if $\nabla f(0)=0$, then $0$ is a \ts\ of  the problem \eqref{L0O}, and even a global minimizer if $f$ is convex.  This case is trivial. However, we are more interested in the non-trivial case. Therefore from now on, we always suppose $\nabla f(0)\neq0$ and  denote \begin{eqnarray}
\label{lamda_upper}\underline\lambda:=\min_i\left\{\frac{\tau}{2}|\nabla_i f(0)|^2:\nabla_i f(0)\neq0\right\},~~~~\overline\lambda:=\max_i \frac{\tau}{2}|\nabla_i f(0)|^2.
\end{eqnarray}  
One can check that if $\lambda$ is chosen to satisfy $0<\lambda\leq\underline \lambda$, then $|0-\tau \nabla_{i}  f (0)|\geq\sqrt{2\tau\lambda}$ for any $i\in J:=\{i\in\N_n:\nabla_i f(0)\neq0\}$, which results in $T_{\tau}(\bx,\lambda)=J$ in \eqref{T} and consequently, $F_{\tau}(0;J)\neq0$ due to $\nabla_J f(0)\neq0$. Namely, $0$ is not a \ts\ of the problem \eqref{L0O}. Hence,  the trivial solution $0$ is excluded. 

On the other hand,  
if $\lambda$ is chosen to satisfy $ \lambda>\overline \lambda$, then $T_{\tau}(\bx,\lambda)=\emptyset$ in \eqref{T}. Because of this $F_{\tau}(0;\emptyset)=0$, namely, $0$ is a \ts\ of the problem \eqref{L0O}. Therefore, when it comes to numerical experiments, this $ \overline \lambda$ provides an upper bound to set up a proper $\lambda$.\end{remark}
\section{Newton Method}\label{Sect:Newton-Method-BLS}
\noindent \cref{tau-F} states that a point satisfying the stationary equation is a stronger condition than being a \ts. The advantage of this equation allows us to design an efficient Newton-type algorithm based on its simple form. Based on the stationary equation \eqref{station}, this section casts a Newton-type method. 

\subsection{Algorithm Design}
\noindent To find a solution to the equation \eqref{station}, we first need to locate the index set $T$ which is unknown in general and then solve the equation. Therefore, we employ an adaptively updating rule as follows. For a computed point $\bx^k$, we first calculate an approximation $T_{k}$.
Then with such a fixed set $T_k$, we apply the Newton method on $F_{\tau}(\bx;T_k)$ once into obtaining a direction $\bd^k$. That is, $\bd^k$ is a solution to the following equation system
\begin{equation} \label{snewton-equ}\nabla F_{\tau}(\bx^{k};T_{k})\bd=-F_{\tau}(\bx^{k};T_{k}).\end{equation}
The explicit formula of $F_{\tau}(\bx^{k};T_{k})$ from \eqref{station} implies that  $\bd^k$ satisfies
 \begin{eqnarray}\label{sequation-k-0}
\nabla_{T_{k}}^{2} f (\bx^k)  \bd^k_{T_{k}} &=& \nabla_{   T_{k}, \overline T_{k} }^{2} f (\bx^k)\bx^{k}_{\overline T_{k}}-\nabla_{ T_{k}}f(\bx^k), \\
\bd^{k}_{\overline{ T }_{k}}&=&-\bx^{k}_{\overline{ T }_{k}}.\nonumber
\end{eqnarray}
Now let us take a look at the above formulas. The second part of $\bd^k$ can be derived directly without any difficulties.  To find $\bd^k$,  one needs to solve a linear equation with $|T_k|$ equations and $|T_k|$ variables. If a full Newton direction is taken, then next iterate $\bx^{k+1}=\bx^{k}+\bd^{k}=[(
\bx^{k}_{T_{k}}+ \bd^k_{T_{k}})^\top~0]^\top.$
This means the support set of $\bx^{k+1}$ will be located within  $T_k$. Namely,
\begin{eqnarray}\label{supp-T}
\supp(\bx^{k+1})\subseteq T_k.
\end{eqnarray}
Based on this idea, we modify the standard rule associated with Amijio line search $\bx^{k+1}=\bx^{k}+\alpha\bd^{k}$ as  $\bx^{k+1}=\bx^{k}(\alpha)$, where
 \begin{eqnarray}\label{xk-alpha}
\bx^{k}(\alpha):=\left[\begin{array}{cc}
\bx^{k}_{T_{k}}+\alpha \bd^k_{T_{k}}\\
\bx^{k}_{\overline T_{k}}+\bd^k_{\overline T_{k}}
\end{array}\right]=\left[\begin{array}{cc}
\bx^{k}_{T_{k}}+\alpha \bd^k_{T_{k}}\\
0
\end{array}\right].
\end{eqnarray}
For notational convenience, let 
 \begin{eqnarray}\label{HTT}
 J_{k} &:=& T_{k-1}\backslash T_{k},~~~S_k:=\widetilde T_{k}\backslash T_{k-1},\\
 g^k&:=&\nabla f(\bx^k),~~~~H_k:=\nabla_{{T_{k}}}^{2} f (\bx^k),~~~~
   G_k :=\nabla_{{T_{k}},{J_k}}^{2} f (\bx^k).
\end{eqnarray}
We summarize the framework of the algorithm in  \Cref{framework-SNSCO}. %entitled with Newton method for the $\ell_0$-regularized (NL0R) optimization.
 \begin{algorithm}[H]
\caption{Newton-type method for the $\ell_0$-regularized optimization (\nzno)}
\label{framework-SNSCO}
\begin{algorithmic}
\STATE{If $\nabla f(0)=0$, then return the solution $0$ and terminate the algorithm. Otherwise, perform the following steps. Give parameters $\tau>0,\delta>0, \lambda\in(0,\underline \lambda), \sigma\in(0,1/2),\beta\in(0,1)$. Initialize $\bx^0$. Set $T_{-1}=\emptyset$ and $k\Leftarrow0$ }
\WHILE{The halting conditions are violated}
\STATE{{\bf Step 1.} Set  $T_k= \widetilde T_k$  if $S_k\neq\emptyset$, and $T_k=T_{k-1}$ otherwise, where $\widetilde T_k$ is computed by\\
 \parbox{.92\textwidth}{\begin{eqnarray}\label{Tk}
\widetilde T_k= \{i\in \N_n:|x_{i}^k-\tau g^k_{i} |\geq\sqrt{2\tau\lambda} \}.
\end{eqnarray}}}
\STATE{{\bf Step 2.} If \eqref{sequation-k-0} is solvable and its solution $\bd^k$ satisfies\\
\parbox{.92\textwidth}{\begin{eqnarray}\label{condition}
\langle g^k_{T_{k} } ,  \bd^{k}_{T_{k} }\rangle\leq-\delta \|\bd^{k}\|^2+ ({1}/{4\tau})\|\bx^{k}_{\overline T_{k} }\|^2,
\end{eqnarray}}\\
\hspace{13mm} then update  $\bd^k$ by  solving \eqref{sequation-k-0}, namely by Newton~direction,\\
\parbox{.92\textwidth}{\begin{eqnarray}\label{d-k-nonsingular}
H_k \bd^k_{T_{k}} = G_k  \bx^{k}_{ J_{k} }-g^k_{ T_{k}},\hspace{1cm}
\bd^{k}_{\overline{ T }_{k}}=-\bx^{k}_{\overline{ T }_{k}}.
\end{eqnarray}}\\
\hspace{13mm} Otherwise,  update  $\bd^k$ by  Gradient~direction\\
 \parbox{.92\textwidth}{\begin{eqnarray}\label{d-k-singular}
\hspace{.3cm} \bd^k_{T_{k}}  = -g^k_{ T_{k}},\hspace{1cm}
\bd^{k}_{\overline{ T }_{k}}=-\bx^{k}_{\overline{ T }_{k}}.
\end{eqnarray}}
}
\STATE{{\bf Step 3.} Find the smallest non-negative integer $m_k$ such that\\
\parbox{0.8\textwidth}{\begin{eqnarray}\label{armijio}
f(\bx^{k}({\beta}^{m_k}))\leq f(\bx^k)+\sigma\beta^{m_k} \langle g^k,  \bd^{k} \rangle.
\end{eqnarray}}
}
\STATE{{\bf Step 4.}  Set $\alpha_k=\beta^{m_k}$, $\bx^{k+1}=\bx^{k}( \alpha_k)$ and $k\Leftarrow k+1$.}
\ENDWHILE
\RETURN $\bx^k$
\end{algorithmic}
\end{algorithm}
\noindent From  \Cref{framework-SNSCO}, the following facts are easy to be achieved:
\begin{eqnarray}\label{clarify}~~~~~~~~~~\left\{
\begin{array}{l}
-\bd^{k}_{\overline{ T }_{k}}=\bx^{k}_{\overline{ T }_{k}}=\left[
\begin{array}{c}
\bx^{k}_{T_{k-1}\cap \overline{ T }_{k}}\\
0
\end{array}
\right]=\left[
\begin{array}{c}
\bx^{k}_{T_{k-1} \setminus T _{k}  }\\
0
\end{array}
\right]\overset{\eqref{HTT}}{=}\left[
\begin{array}{c}
\bx^{k}_{J_k}\\
0
\end{array}
\right],\\
\nabla^2_{T_{k}\cup J_{k}  } f(\bx^k)=
\left[
 \begin{array}{c c}
   H_k & G_k  \\
  G_k ^\top& \nabla^2_{  J_{k} }f(\bx^k)\\
 \end{array}
\right].
\end{array} \right.
\end{eqnarray}
We emphasize that $J_k$ captures all nonzero elements in
$\bx^{k}_{\overline{ T }_{k}}$. This and \eqref{clarify} also allow  us to explain that \eqref{sequation-k-0} is rewritten as \eqref{d-k-nonsingular}. Therefore, we will see more $J_k$ instead of  $\overline{ T }_{k}$ being used in convergence analysis.
\begin{lemma}\label{lemma-0} If  $\bd^k$ is from $\eqref{d-k-nonsingular}$, then we have
\begin{eqnarray}
\label{bounded-H} \hspace{1.5cm}\langle g^k_{T_{k} }  ,  \bd^{k}_{T_{k} }\rangle + \langle \bd^{k}_{ T_{k}}, H_k \bd^{k}_{  T_{k}}\rangle =- \langle \bd^{k}_{T_{k}\cup J_{k} },  \nabla^2_{T_{k}\cup J_{k}  } f(\bx^k)\bd^{k}_{T_{k}\cup J_{k} } \rangle+
\langle \bd^{k}_{ J_{k} },   \nabla^2_{ J_{k} }f(\bx^k )\bd^{k}_{ J_{k} }\rangle. 
\end{eqnarray}
\end{lemma}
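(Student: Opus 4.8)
The statement is a pure linear-algebra identity once the Newton direction from \eqref{d-k-nonsingular} is written in its two natural pieces, so the plan is to eliminate $g^k_{T_k}$ and then expand the quadratic form on $T_k\cup J_k$ via the block decomposition recorded in \eqref{clarify}. First I would use \eqref{clarify}, which gives $\bx^k_{J_k}=-\bd^k_{J_k}$, to rewrite the first relation in \eqref{d-k-nonsingular} as $H_k\bd^k_{T_k}=G_k\bx^k_{J_k}-g^k_{T_k}=-G_k\bd^k_{J_k}-g^k_{T_k}$, i.e.
\[
g^k_{T_k}=-H_k\bd^k_{T_k}-G_k\bd^k_{J_k}.
\]
This expresses the gradient block entirely through the two sub-vectors $\bd^k_{T_k}$ and $\bd^k_{J_k}$ of the Newton direction, which are precisely the blocks that appear on the right-hand side of \eqref{bounded-H}.

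Second, I would expand the combined quadratic form using the $2\times2$ partition of $\nabla^2_{T_k\cup J_k}f(\bx^k)$ displayed in \eqref{clarify}, namely the blocks $H_k$, $G_k$, $G_k^\top$ and $\nabla^2_{J_k}f(\bx^k)$. Writing $\bd^k_{T_k\cup J_k}$ as the concatenation of $\bd^k_{T_k}$ and $\bd^k_{J_k}$ yields three groups of terms: the diagonal contribution $\langle\bd^k_{T_k},H_k\bd^k_{T_k}\rangle$, a cross contribution in $G_k$ (which comes with a factor two because the off-diagonal blocks are transposes of one another), and the pure-$J_k$ contribution $\langle\bd^k_{J_k},\nabla^2_{J_k}f(\bx^k)\bd^k_{J_k}\rangle$. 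Subtracting the last term, exactly as it appears on the right of \eqref{bounded-H}, cancels the pure-$J_k$ piece, so the right-hand side reduces to a combination of only the $H_k$-diagonal and the $G_k$-cross terms.

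Third, I would substitute the expression for $g^k_{T_k}$ into $\langle g^k_{T_k},\bd^k_{T_k}\rangle$ on the left-hand side and use the symmetry of $H_k$ (so that $\langle H_k\bd^k_{T_k},\bd^k_{T_k}\rangle=\langle\bd^k_{T_k},H_k\bd^k_{T_k}\rangle$) to collect the resulting $H_k$- and $G_k$-terms; matching these against the reduced right-hand side from the previous step closes the argument. The reasoning is conceptually routine, so the real care lies in bookkeeping: one must track the minus sign coming from $\bx^k_{J_k}=-\bd^k_{J_k}$ and, above all, the coefficient on the $G_k$ cross term produced by the symmetric off-diagonal blocks. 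That coefficient is exactly the delicate point on which the identity hinges, so I expect the main obstacle to be verifying that the cross-term coefficients on the two sides agree, rather than anything structural.
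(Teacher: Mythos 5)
Your route is the same as the paper's, just run in the opposite direction: the paper starts from the quadratic form $\langle \bd^{k}_{T_{k}\cup J_{k}},\nabla^2_{T_{k}\cup J_{k}} f(\bx^k)\bd^{k}_{T_{k}\cup J_{k}}\rangle$, expands it via the block partition in \eqref{clarify}, and substitutes \eqref{d-k-nonsingular} together with $\bx^k_{J_k}=-\bd^k_{J_k}$, whereas you solve \eqref{d-k-nonsingular} for $g^k_{T_k}$ and substitute into the left-hand side. Same two ingredients, same algebra.

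However, the cross-term coefficient you flag as the delicate point is exactly where the statement as printed breaks, and your assertion that the matching ``closes the argument'' does not hold for \eqref{bounded-H} as displayed. Carrying out your own plan: with $g^k_{T_k}=-H_k\bd^k_{T_k}-G_k\bd^k_{J_k}$, the printed left side collapses to $-\langle\bd^k_{T_k},G_k\bd^k_{J_k}\rangle$, while the printed right side equals $-\langle\bd^k_{T_k},H_k\bd^k_{T_k}\rangle-2\langle\bd^k_{T_k},G_k\bd^k_{J_k}\rangle$; these differ by exactly $\langle g^k_{T_k},\bd^k_{T_k}\rangle$, so \eqref{bounded-H} as written holds only when $\langle g^k_{T_k},\bd^k_{T_k}\rangle=0$. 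The correct identity --- the one the paper's own proof chain actually derives (its final line reads $\cdots=-2\langle g^k_{T_{k}},\bd^{k}_{T_{k}}\rangle-\langle\bd^{k}_{T_{k}},H_k\bd^{k}_{T_{k}}\rangle+\langle\bd^{k}_{J_{k}},\nabla^2_{J_{k}}f(\bx^k)\bd^{k}_{J_{k}}\rangle$) and the one invoked later in part 2) of the proof of \cref{converge-rate} --- carries a factor $2$:
\begin{equation*}
2\langle g^k_{T_{k}},\bd^{k}_{T_{k}}\rangle+\langle \bd^{k}_{T_{k}},H_k\bd^{k}_{T_{k}}\rangle=-\langle \bd^{k}_{T_{k}\cup J_{k}},\nabla^2_{T_{k}\cup J_{k}}f(\bx^k)\bd^{k}_{T_{k}\cup J_{k}}\rangle+\langle \bd^{k}_{J_{k}},\nabla^2_{J_{k}}f(\bx^k)\bd^{k}_{J_{k}}\rangle.
\end{equation*}
So your plan is sound and substantively identical to the paper's, but executing it faithfully proves this corrected identity rather than the displayed one: \eqref{bounded-H} contains a typo (a missing factor of $2$ on the gradient term), and you should expect your coefficient check to expose that mismatch, not to close cleanly against the printed equation.
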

\begin{proof}
If  $\bd^k$ is from \eqref{d-k-nonsingular}, then we have the following chain of equations,
       \begin{eqnarray*}
       &&\langle \bd^{k}_{T_{k}\cup J_{k} },  \nabla^2_{T_{k}\cup J_{k}  } f(\bx^k)\bd^{k}_{T_{k}\cup J_{k} } \rangle\\
&\overset{\eqref{clarify}}{=}&
%    \left[
% \begin{array}{cc}
%   \bd^{k}_{T_{k}  }  \\
%  \bd^{k}_{ J_{k} }
% \end{array}
%\right]^\top \left[
% \begin{array}{cc}
%   H_k & G_k  \\
%  G_k ^\top& \nabla^2_{  J_{k} }f(\bx^k)
% \end{array}
%\right]\left[
% \begin{array}{cc}
%   \bd^{k}_{T_{k}  }  \\
%  \bd^{k}_{ J_{k} }
% \end{array}
%\right]\\
%&=&
\left[
 \begin{array}{cc}
   \bd^{k}_{T_{k}  }  \\
  \bd^{k}_{ J_{k} }
 \end{array}
\right]^\top \left[
 \begin{array}{c}
   H_k\bd^{k}_{T_{k} }+ G_k \bd^{k}_{  J_{k} } \\
   G_k ^\top\bd^{k}_{T_{k} }+\nabla^2_{ J_{k}  }f(\bx^k)\bd^{k}_{  J_{k} }
    \end{array}
\right] \nonumber\\
&\overset{\eqref{clarify}}{=}& \langle  \bd^{k}_{T_{k} }, H_k\bd^{k}_{T_{k} }- G_k \bx^{k}_{  J_{k} } \rangle - \langle \bx^{k}_{  J_{k} },  G_k^\top\bd^{k}_{  T_{k}}\rangle+
\langle \bd^{k}_{ J_{k} },   \nabla^2_{ J_{k} }f(\bx^k )\bd^{k}_{ J_{k} }\rangle\\
&=& 2\langle  \bd^{k}_{T_{k} }, H_k\bd^{k}_{T_{k} }- G_k \bx^{k}_{  J_{k} } \rangle - \langle H_k\bd^{k}_{T_{k} },   \bd^{k}_{  T_{k}}\rangle+
\langle \bd^{k}_{ J_{k} },   \nabla^2_{ J_{k} }f(\bx^k )\bd^{k}_{ J_{k} }\rangle\\
  &\overset{\eqref{d-k-nonsingular}}{=}&-2\langle g^k_{T_{k} }  ,  \bd^{k}_{T_{k} }\rangle-
 \langle \bd^{k}_{ T_{k}}, H_k \bd^{k}_{  T_{k}}\rangle+
\langle \bd^{k}_{ J_{k} },   \nabla^2_{ J_{k} }f(\bx^k )\bd^{k}_{ J_{k} }\rangle,\nonumber
\end{eqnarray*}
which concludes our claim immediately.\qed
\end{proof}
\Cref{lemma-0} indicates that  if  $\nabla^2_{T_{k}\cup J_{k} } f(\bx^k) $ has a positive lower and upper bound, so is $H_k$   bounded from below and $\nabla^2_{ J_{k} }f(\bx^k )$  bounded from above, then \eqref{condition} is satisfied in each step under some properly chosen $\delta$ and $\tau$. This allows the Newton direction to be always imposed. Apparently, $\nabla^2_{T_{k}\cup J_{k} } f(\bx^k) $ being bounded from below can be guaranteed by some assumptions, such as the strong  convexity of $f$, which, however, is a strong assumption. To overcome this, the gradient direction compensates the case when the condition \eqref{condition} is violated.

%\begin{remark}To end this section, we would like to see the relationship between \nzno\ and  {NHTP}, a recently proposed algorithm in \cite{zhou2019global}. The latter was designed for solving the sparsity constrained optimization \eqref{SCO} with extraordinary numerical performance in terms of high order of accuracy and super-fast computational speed.  It turns out that if we replace \eqref{s-k} by $s_k=s$ for any $k\geq 1$ in Table \eqref{framework-psd}, then \nzno\ reduces to  {NHTP}. However, the latter requires that $s$ must be given in advance, which is not always the case in practice, while \nzno\ could start with a proper sparsity level (e.g., $s_{-1}=\lceil 0.05n\ln(n)\rceil$ and adaptively learn a good one through the scheme \texttt{PSS}. Numerical experiments will show that this scheme allows our proposed algorithm to function very well.
%\end{remark}

\subsection{Global and quadratic convergence} 
\noindent As mentioned in \cref{lambda-upper-lower}, if $\nabla f(0)=0$, then $0$ is a \ts\ of  the problem \eqref{L0O}, and even a global minimizer if $f$ is convex.  But this case is trivial.  Therefore, we focus on the case of $\nabla f(0)\neq0$ in \Cref{framework-SNSCO}.  Before our main results, we  define some parameters by
\allowdisplaybreaks\begin{eqnarray}\label{alpha-r}
~~~~\overline{\alpha}&:=&\min\left\{\frac{ 1-2\sigma }{L/\delta -\sigma}
 ,\frac{2(1-\sigma)\delta}{L},~~1\right\},\nonumber\\
\overline{\tau}&:=& \min\left\{\frac{ 2\overline{\alpha} \delta  \beta}{ nL^2},
~~  \frac{{\overline{\alpha}\beta}}{n} ,~~ \frac{1}{4L}\right\}, \\
\rho&:= &\min\left\{\frac{ 2\delta -  n\tau L^2}{2},~~\frac{2-  n\tau}{2}  \right\}.\nonumber
\end{eqnarray}
Our first result shows that the direction in each step of \nzno\ is a descent one with a decent declining rate,  no matter it is taken from the Newton or the gradient direction.
\begin{lemma}[Descent property] \label{descent-direction}
Let $f$ be  strongly smooth with $L>0$ and $\overline{\tau}, \rho$ be defined as \eqref{alpha-r}. Then for any $\tau\in(0,\overline{\tau})$, it holds $\rho>0$ and
\begin{eqnarray}\label{decreasing-direction}
 \langle g^k, \bd^{k}\rangle  \leq -  \rho\| \bd^{k}\|^2-\frac{\tau}{2}  \| g^k_{T_{k-1}} \|^2.
 \end{eqnarray}
\end{lemma}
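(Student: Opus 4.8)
The plan is to estimate the directional derivative $\langle g^k,\bd^k\rangle$ directly, splitting $\N_n$ into $T_{k-1}\cap T_k$, $J_k=T_{k-1}\setminus T_k$ and the remaining indices, and to treat the two branches of Step~2 (Newton direction \eqref{d-k-nonsingular} and gradient direction \eqref{d-k-singular}) separately. First I would dispose of $\rho>0$: by \eqref{alpha-r}, $\tau<\overline{\tau}$ forces $\tau<2\delta/(nL^2)$ and $\tau<2/n$, so both entries defining $\rho$ are positive. Next, since $\bd^k_{\overline T_k}=-\bx^k_{\overline T_k}$ and this vector is supported on $J_k$ by \eqref{clarify}, in either branch
$$\langle g^k,\bd^k\rangle=\langle g^k_{T_k},\bd^k_{T_k}\rangle-\langle g^k_{J_k},\bx^k_{J_k}\rangle,$$
so the only dangerous quantity is the cross term $-\langle g^k_{J_k},\bx^k_{J_k}\rangle$, which may be positive. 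The trivial case $S_k=\emptyset$ (so $T_k=T_{k-1}$, $J_k=\emptyset$, $\bx^k_{\overline T_k}=0$) kills the cross term outright; the real work is the case $S_k\neq\emptyset$, $T_k=\widetilde T_k$.

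The device I would use is to pair the cross term with the piece $\tfrac{\tau}{2}\|g^k_{J_k}\|^2$ extracted from the target, via $\tfrac{\tau}{2}\|g^k_{T_{k-1}}\|^2=\tfrac{\tau}{2}\|g^k_{T_{k-1}\cap T_k}\|^2+\tfrac{\tau}{2}\|g^k_{J_k}\|^2$. Every $i\in J_k$ lies outside $\widetilde T_k$, so $|x^k_i-\tau g^k_i|^2<2\tau\lambda$ by \eqref{Tk}; expanding the square, dividing by $2\tau$ and summing over $J_k$ gives the clean bound
$$-\langle g^k_{J_k},\bx^k_{J_k}\rangle+\tfrac{\tau}{2}\|g^k_{J_k}\|^2<|J_k|\lambda-\tfrac{1}{2\tau}\|\bx^k_{J_k}\|^2,$$
which trades the troublesome cross term for a harmless $-\tfrac{1}{2\tau}\|\bx^k_{J_k}\|^2$ plus the scalar $|J_k|\lambda$. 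To absorb $|J_k|\lambda$ I would exploit $S_k\neq\emptyset$: any $j\in S_k$ has $x^k_j=0$ and $|g^k_j|\geq\sqrt{2\lambda/\tau}$, so $\lambda\leq\tfrac{\tau}{2}|g^k_j|^2$. In the gradient branch $d^k_j=-g^k_j$ yields $\lambda\leq\tfrac{\tau}{2}\|\bd^k\|^2$; in the Newton branch the identity $g^k_{T_k}=-[\nabla^2_{T_k\cup J_k}f(\bx^k)\,\bd^k_{T_k\cup J_k}]_{T_k}$ (from \eqref{d-k-nonsingular} and \eqref{clarify}) with $\|\nabla^2 f\|_2\le L$ gives $|g^k_j|\le\|g^k_{T_k}\|\le L\|\bd^k\|$, hence $\lambda\leq\tfrac{\tau L^2}{2}\|\bd^k\|^2$. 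Combined with the counting bound $|J_k|\leq n-1$ (the disjoint sets $T_{k-1}\cap T_k$, $J_k$, $S_k$ fill $\N_n$ and $|S_k|\ge1$), the term $|J_k|\lambda$ becomes $\tfrac{(n-1)\tau}{2}\|\bd^k\|^2$, respectively $\tfrac{(n-1)\tau L^2}{2}\|\bd^k\|^2$.

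Finally I would assemble each branch. In the gradient branch $\langle g^k_{T_k},\bd^k_{T_k}\rangle=-\|g^k_{T_k}\|^2$, $\|\bd^k\|^2=\|g^k_{T_k}\|^2+\|\bx^k_{J_k}\|^2$, and $\|g^k_{T_{k-1}\cap T_k}\|^2\le\|g^k_{T_k}\|^2$; feeding in the three estimates above collapses everything to
$$\langle g^k,\bd^k\rangle+\tfrac{\tau}{2}\|g^k_{T_{k-1}}\|^2<-\big(1-\tfrac{n\tau}{2}\big)\|g^k_{T_k}\|^2+\big(\tfrac{(n-1)\tau}{2}-\tfrac{1}{2\tau}\big)\|\bx^k_{J_k}\|^2,$$
and both coefficients are $\le-\rho$, since $\rho\le1-\tfrac{n\tau}{2}$ and since $1-\tfrac{n\tau}{2}\le\tfrac{1}{2\tau}-\tfrac{(n-1)\tau}{2}$ reduces to $(1-\tau)^2\ge0$. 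In the Newton branch I would instead insert \eqref{condition} for $\langle g^k_{T_k},\bd^k_{T_k}\rangle$, discard the spare $-\tfrac{1}{4\tau}\|\bx^k_{J_k}\|^2\le0$, and add $\tfrac{\tau}{2}\|g^k_{T_{k-1}\cap T_k}\|^2\le\tfrac{\tau L^2}{2}\|\bd^k\|^2$ to the $\tfrac{(n-1)\tau L^2}{2}\|\bd^k\|^2$ coming from $|J_k|\lambda$, arriving at $-(\delta-\tfrac{n\tau L^2}{2})\|\bd^k\|^2\le-\rho\|\bd^k\|^2$. The main obstacle, and the step I would pin down first, is exactly this cross-term/threshold bookkeeping: a naive Cauchy--Schwarz estimate of $-\langle g^k_{J_k},\bx^k_{J_k}\rangle$ leaves an uncontrolled $\|\bx^k_{J_k}\|^2$ and a stray $|J_k|\lambda$, and only the combination of (i)~pairing with $\tfrac{\tau}{2}\|g^k_{J_k}\|^2$, (ii)~bounding $\lambda$ through the activated coordinate in $S_k$, and (iii)~the cardinality bound $|J_k|\le n-1$ makes the constants coincide with $\overline{\tau}$ and $\rho$ in \eqref{alpha-r}; because the thresholding operator is discontinuous, no smoothness shortcut is available here.
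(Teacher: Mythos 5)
Your proposal is correct and follows essentially the same route as the paper's proof of \cref{descent-direction}: the same case split on $S_k=\emptyset$ versus $S_k\neq\emptyset$ and on the Newton/gradient branches, the same bound $\|g^k_{T_k}\|\leq L\|\bd^k\|$ in the Newton branch, and the same key device of expanding the thresholding inequality $\|\bx^k_{J_k}-\tau g^k_{J_k}\|^2< 2\tau\lambda|J_k|$ and absorbing $|J_k|\lambda$ through the activated coordinates of $S_k$, where $|\tau g^k_j|\geq\sqrt{2\tau\lambda}$. Your bookkeeping differs only cosmetically --- you use $|J_k|\leq n-1$ together with a single coordinate of $S_k$ where the paper uses $|J_k|/|S_k|\leq n$ with $\|g^k_{S_k}\|^2$, and your $(1-\tau)^2\geq 0$ step is the paper's inequality $-1/\tau\leq \tau-2$ in disguise --- so the constants align with \eqref{alpha-r} in the same way.
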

\begin{proof}It follows from \eqref{alpha-r}
that  $\overline{\alpha}\leq1$ and thus $\overline{\alpha} \beta <1$ due to $\beta\in(0,1)$.  Hence
 $\overline{\tau}\leq \min\left\{ 2\delta/(nL^2),2/n\right\},$ which immediately shows $\rho>0$ if $\tau\in(0,\overline{\tau})$.
In addition, if $\bd^k$ is updated by  \eqref{d-k-nonsingular}, then\begin{eqnarray}\label{nabla-T-f-0}
\|g^k_{T_{k}}\|
 \overset{\eqref{d-k-nonsingular}}{=} 
\|H_k  \bd^k_{T_{k}}-G_k  \bx^{k}_{ J_{k} } \| 
 \overset{\eqref{clarify}}{=}    \|[H_k~G_k ] \bd^{k}_{T_{k}\cup J_{k} } \|%\leq L \| \bd^{k}_{T_{k}\cup J_{k} } \|
\overset{\eqref{clarify}}{\leq}L\| \bd^k \|,
 \end{eqnarray}
where the inequality holds because of
$\|[H_k~G_k ]\|_2\leq \|\nabla^2_{T_{k}\cup J_{k} } f(\bx^k)\|_2\leq L$
due to strong smoothness of $f$ with the constant $L$. We now prove the conclusion by two cases. % \textbf{Case i)} $S_k=\widetilde T_{k}\setminus T_{k-1} = \emptyset$ and \textbf{Case ii)} $S_k \neq \emptyset$ into consideration.

\textbf{Case i)} {$S_k= \emptyset$}.  Step 1 in  \Cref{framework-SNSCO} sets $T_{k} = T_{k-1}$. Consequently, $J_k=T_{k-1}\setminus T_{k}=\emptyset$ and  $\bd^{k}_{\overline T_{k} } =-\bx^{k}_{\overline T_{k} } =0$ from \eqref{clarify}. If $\bd^k$ is updated by  \eqref{d-k-nonsingular}, then it holds
\begin{eqnarray} \hspace{5mm} 2\langle g^{k}, \bd^{k} \rangle
&=&  2\langle g^{k}_{T_{k} },  \bd^{k}_{T_{k} }\rangle- 2\langle g^{k}_{\overline T_{k} } ,  \bx^{k}_{\overline T_{k} }\rangle=  2\langle g^{k}_{T_{k} },
\bd^{k}_{T_{k} }\rangle\nonumber\\
&\overset{\eqref{condition}}{\leq}&-2\delta \|\bd^{k} \|^2+ \|\bx^{k}_{\overline T_{k} }\|^2/(2\tau)=-2\delta \|\bd^{k} \|^2\nonumber\\
&\leq&-2\delta \|\bd^{k} \|^2+n\tau \| g^k_{T_{k}} \|^2- \tau   \| g^k_{T_{k}} \|^2\nonumber\\
&\overset{\eqref{nabla-T-f-0}}{\leq}&-[2\delta-\tau L^2] \|\bd^{k} \|^2- \tau   \| g^k_{T_{k}} \|^2\nonumber\\
\label{fd-TT-0}&\overset{\eqref{alpha-r}}{\leq}&-2\rho \| \bd^k\|^2-\tau  \|  g^k_{T_{k-1}} \|^2,
\end{eqnarray}
where the last inequality holds due to $T_{k} = T_{k-1}$. If $\bd^k$ is updated by  \eqref{d-k-singular}, then it follows from $\bd^{k}_{T_{k} }=-g^k_{T_{k}}=-g^k_{T_{k-1}}$ that
\begin{eqnarray}2\langle g^{k}, \bd^{k} \rangle
&=&   2\langle g^{k}_{T_{k} },  \bd^{k}_{T_{k} }\rangle- 2\langle g^{k}_{\overline T_{k} } ,  \bx^{k}_{\overline T_{k} }\rangle=  -2  \|\bd^{k}_{T_{k}} \|^2\nonumber\\
&\leq& - 2  \|\bd^{k}_{T_{k}} \|^2+n\tau   \|\bd^{k}_{T_{k}} \|^2-\tau \|\bd^{k}_{T_{k}} \|^2\nonumber\\
&=& -(2-n\tau)  \|\bd^{k}\|^2-\tau \|g^k_{T_{k-1}} \|^2\nonumber\\
&\overset{\eqref{alpha-r}}{\leq}&  -2\rho \| \bd^k\|^2-\tau  \|  g^k_{T_{k-1}} \|^2.
\end{eqnarray}

\textbf{Case ii)} $S_{k} \neq \emptyset$. For any $i\in S_k=\widetilde  T_{k} \setminus T_{k-1}= T_{k} \setminus T_{k-1}$,  we have  $x_i^k=0$
because of $\supp(\bx^{k}) \subseteq T_{k-1}$ by \eqref{supp-T}.
Then the definition of $T_k=\widetilde  T_{k}$ in \eqref{Tk} gives rise to
 \begin{eqnarray}\label{existence-alpha-facts-21}
\forall i\in S_k,~~  |\tau g^k_i |^2=|x_i^k-\tau g^k_i|^2 \geq 2\tau\lambda  >  |x_j^k-\tau g^k_j|^2,~~ \forall j\in  J_{k}.
 \end{eqnarray}
This   suffices to the following chain of inequalities
\begin{eqnarray*}
&&(|J_k|/|S_k|) \tau^2\left[  \| g^k_{T_{k}}  \|^2 -\| g^k_{T_{k} \cap T_{k-1}}  \|^2 \right]\\
&=&(|J_k|/|S_k|)\tau^2 \| g^k_{S_k} \|^2\\
& \overset{\eqref{existence-alpha-facts-21}}{\geq} & |J_k|  2\tau\lambda    \overset{\eqref{existence-alpha-facts-21}} {>}  
 \|\bx^k_{J_{k}}-\tau g^k_{J_{k}}  \|^2\\
 &=&  \|\bx^k_{J_{k}}\|^2-2\tau  \langle\bx^k_{J_{k}},g^k_{J_{k}}  \rangle+ \tau^2\| g^k_{J_{k}} \|^2 \\
 & \overset{\eqref{clarify}}{=}&   \|\bx^k_{\overline T_{k}}\|^2-2\tau  \langle\bx^k_{J_{k}},g^k_{J_{k}} \rangle+ \tau^2\| g^k_{J_{k}}  \|^2\\
 &=&  \|\bx^k_{\overline T_{k}}\|^2-2\tau  \langle\bx^k_{J_{k}},g^k_{J_{k}} \rangle+ \tau^2\left[ \| g^k_{T_{k-1}}  \|^2 -~\| g^k_{T_{k} \cap T_{k-1}}  \|^2 \right]
 \end{eqnarray*}
Since $|J_k|/|S_k|\leq n$, the above inequalities result  in our first fact
 \begin{eqnarray}
\label{existence-alpha-facts-3} - 2\langle \bx^k_{J_{k}},g^k_{J_{k}} \rangle &\leq& n\tau  \| g^k_{T_{k}} \|^2-\tau  \| g^k_{T_{k-1}} \|^2-  \|\bx^k_{\overline T_{k}}\|^2/\tau\\
\label{xk-T1-T1} &\overset{\eqref{nabla-T-f-0}}{\leq}&   n\tau  L^2 \| \bd^k\|^2-\tau  \| g^k_{T_{k-1}}  \|^2-  \|\bx^k_{\overline T_{k}}\|^2/\tau.
 \end{eqnarray}
%This together with  derives
% \begin{eqnarray}
% - 2\langle \bx^k_{J_{k}},g^k_{J_{k}} \rangle
% \end{eqnarray}
Now we are ready to establish our claim.  If $\bd^k$ is updated by  \eqref{d-k-nonsingular}, then
\begin{eqnarray}\label{fd-TT} 2\langle  g^k_{T_{k} }  ,
\bd^{k}_{T_{k} }\rangle \overset{\eqref{condition}}{\leq} -2\delta \|\bd^{k} \|^2+ \|\bx^{k}_{\overline T_{k} }\|^2/(2\tau).
%&\leq&-2\delta \|\bd^{k} \|^2+ \|\bx^{k}_{\overline T_{k} }\|^2/(2\tau).\nonumber
\end{eqnarray}
The direct calculation yields the following chain of inequalities,
\begin{eqnarray*}
   2\langle g^{k} , \bd^{k} \rangle
&=&  2\langle g^k_{T_{k} }  ,  \bd^{k}_{T_{k} }\rangle- 2\langle g^{k}_{\overline T_{k} }  ,  \bx^{k}_{\overline T_{k} }\rangle
 \overset{\eqref{clarify}}{=}   2 \langle g^{k}_{T_{k} }  ,  \bd^{k}_{T_{k} }\rangle- 2\langle g^{k}_{J_{k} }  ,  \bx^{k}_{ J_{k} }\rangle\\
&\overset{\eqref{fd-TT}, \eqref{xk-T1-T1}}{\leq}&{ {- ( 2\delta - n\tau L^2  ) }} \| \bd^k \|^2-\|\bx^k_{\overline T_{k} }\|^2/(2\tau) -\tau  \| g^{k}_{T_{k-1}} \|^2\\
&\overset{\eqref{alpha-r}}{\leq}& -2\rho \| \bd^k\|^2-\tau  \| g^{k}_{T_{k-1}}  \|^2.
 \end{eqnarray*}
If $\bd^k$ is updated by  \eqref{d-k-singular}, then $\bd^{k}_{T_{k} }=-g^k_{T_{k} }$ yields that
 \begin{eqnarray*}
 2\langle  g^{k}, \bd^{k} \rangle&=& 2\langle g^{k}_{T_{k} }  ,  \bd^{k}_{T_{k} }\rangle- 2\langle g^{k}_{\overline T_{k} }  ,  \bx^{k}_{\overline T_{k} }\rangle= -2 \| \bd^k_{T_{k}}\|^2  - 2\langle g^{k}_{J_{k} }  ,  \bx^{k}_{J_{k} }\rangle\\
&\overset{\eqref{existence-alpha-facts-3}}{\leq}& -2 \| \bd^k_{T_{k}}\|^2  + n\tau\|g^{k}_{T_{k} }  \|^2-\|\bx^k_{\overline T_{k} }\|^2/ \tau  -\tau  \| g^{k}_{T_{k-1}}  \|^2\\
&\overset{\eqref{clarify}}{=}& { {-(2-n\tau)}} \| \bd^k_{T_{k}}\|^2- \|\bd^k_{\overline T_{k} }\|^2/ \tau  -\tau  \| g^{k}_{T_{k-1}}  \|^2\\
&\leq & {{-(2- n\tau)}} (\| \bd^k_{T_{k}}\|^2+ \|\bd^k_{\overline T_{k} }\|^2)  -\tau  \| g^{k}_{T_{k-1}} \|^2\\
&\overset{\eqref{alpha-r}}{\leq}& -2\rho \| \bd^k\|^2-\tau  \|g^{k}_{T_{k-1}}\|^2,
 \end{eqnarray*}
where the second inequality is from $-1/\tau\leq\tau-2\leq n\tau-2$ for any $\tau>0$.\qed
\end{proof}
 Our next result shows that $\alpha_k$ exists and is bound away from zero. This means the step length to update next point is well defined and would not be too small, which is expected to  {speed up the convergence.}
\begin{lemma} [Existence and boundedness of $\alpha_k$] \label{existence-alpha}
Let $f$ be  strongly smooth with $L>0$ and $\overline{\alpha},\overline{\tau}$ be defined as \eqref{alpha-r}. Then
  \begin{eqnarray}\label{alpha-decreasing-property}
 f(\bx^{k}(\alpha))\leq f(\bx^k)+\sigma \alpha \langle g^{k},  \bd^{k} \rangle
  %\leq -\sigma \alpha\overline{\rho}\| \bd^{k}\|^2.
 \end{eqnarray}
holds for any $k\geq 0$ and any parameters \begin{eqnarray*}\label{decreasing-property}
 0<\alpha\leq\overline{\alpha},~~~0<\delta\leq\min \{1,2L\},~~~
 { {0<\tau\leq\min\left\{  \alpha \delta /(nL^2),~\alpha/n,~ 1/({4L}) \right\}}}.\end{eqnarray*}
Moreover, for any $\tau\in(0,\overline{\tau})$,  we have $\inf_{k\geq 0} \{\alpha_k\} \geq \beta \overline \alpha >0.$
\end{lemma}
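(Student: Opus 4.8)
The plan is to split the argument into two essentially independent parts. First I would establish the sufficient-decrease inequality \eqref{alpha-decreasing-property} for every \emph{fixed} step length $\alpha$ in the admissible range and the stated ranges of $\delta$ and $\tau$; second, I would deduce the existence of the backtracking exponent $m_k$ and the uniform lower bound $\inf_k\alpha_k\ge\beta\overline\alpha$ from part one by a routine monotonicity argument. All the analytic content sits in the first part.

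For the first part I would start from strong smoothness \eqref{strong-smooth} applied to the pair $\bx^k(\alpha)$ and $\bx^k$. Using the explicit form \eqref{xk-alpha} together with $\bd^k_{\overline T_k}=-\bx^k_{\overline T_k}$ from \eqref{clarify}, the increment decomposes as $\bx^k(\alpha)-\bx^k=(\alpha\bd^k_{T_k},\,\bd^k_{\overline T_k})$, whence
\begin{eqnarray*}
f(\bx^k(\alpha))-f(\bx^k)\le \alpha\langle g^k_{T_k},\bd^k_{T_k}\rangle+\langle g^k_{\overline T_k},\bd^k_{\overline T_k}\rangle+\tfrac{L}{2}\big(\alpha^2\|\bd^k_{T_k}\|^2+\|\bd^k_{\overline T_k}\|^2\big).
\end{eqnarray*}
Rewriting $\langle g^k_{\overline T_k},\bd^k_{\overline T_k}\rangle=-\langle g^k_{J_k},\bx^k_{J_k}\rangle$ and $\|\bd^k_{\overline T_k}\|^2=\|\bx^k_{J_k}\|^2$, the target \eqref{alpha-decreasing-property} reduces to showing that the residual
\begin{eqnarray*}
(1-\sigma)\alpha\langle g^k_{T_k},\bd^k_{T_k}\rangle-(1-\sigma\alpha)\langle g^k_{J_k},\bx^k_{J_k}\rangle+\tfrac{L}{2}\alpha^2\|\bd^k_{T_k}\|^2+\tfrac{L}{2}\|\bx^k_{J_k}\|^2
\end{eqnarray*}
is nonpositive. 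I would bound the three dangerous terms exactly as in the proof of \Cref{descent-direction}: the cross term $\langle g^k_{T_k},\bd^k_{T_k}\rangle$ via \eqref{condition} in the Newton case \eqref{d-k-nonsingular} and via $-\|\bd^k_{T_k}\|^2$ in the gradient case \eqref{d-k-singular}; the inner product $\langle g^k_{J_k},\bx^k_{J_k}\rangle$ via the key estimate \eqref{existence-alpha-facts-3} (which is vacuous when $S_k=\emptyset$, since then $J_k=\emptyset$ and $\bx^k_{\overline T_k}=0$); and $\|g^k_{T_k}\|$ via \eqref{nabla-T-f-0}. Substituting and collecting coefficients, nonpositivity follows provided $\alpha\le\overline\alpha$ and $\tau$ lies in the stated interval, which is precisely how $\overline\alpha,\overline\tau$ in \eqref{alpha-r} are engineered.

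For the second part, once \eqref{alpha-decreasing-property} is known to hold for every $\alpha\in(0,\overline\alpha]$, let $m^*$ be the smallest nonnegative integer with $\beta^{m^*}\le\overline\alpha$. The Armijo test \eqref{armijio} is then met at $m=m^*$, so the backtracking terminates with $m_k\le m^*$, hence $\alpha_k=\beta^{m_k}\ge\beta^{m^*}$. Since $\beta^{m^*-1}>\overline\alpha$ by minimality, one has $\beta^{m^*}=\beta\cdot\beta^{m^*-1}>\beta\overline\alpha$, so $\alpha_k>\beta\overline\alpha$ for every $k$ and $\inf_{k\ge0}\alpha_k\ge\beta\overline\alpha>0$. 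This simultaneously certifies that $m_k$ is finite, so $\alpha_k$ is well defined.

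The main obstacle is entirely in the first part. The second-order remainder splits into an $O(\alpha^2)\|\bd^k_{T_k}\|^2$ piece, which is tamed simply by taking $\alpha$ small, and a more delicate $\tfrac{L}{2}\|\bx^k_{\overline T_k}\|^2$ piece that does \emph{not} shrink with $\alpha$ and must instead be absorbed into the first-order descent. The term $-\|\bx^k_{\overline T_k}\|^2/\tau$ produced by \eqref{existence-alpha-facts-3} is what must cancel both this piece and the positive slack $\tfrac{1}{4\tau}\|\bx^k_{\overline T_k}\|^2$ that \eqref{condition} permits, and this cancellation succeeds only when $\tau$ is small enough (the constraint $\tau\le 1/(4L)$). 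Keeping the bookkeeping consistent across the four sub-cases (Newton/gradient direction combined with $S_k=\emptyset$/$S_k\ne\emptyset$), so that all coefficients come out nonpositive under a single choice of $(\overline\alpha,\overline\tau)$, is the delicate point; everything else is routine.
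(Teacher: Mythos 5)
Your proposal is correct and follows essentially the same route as the paper's proof: the same strong-smoothness expansion of $f(\bx^{k}(\alpha))$ around $\bx^k$, the same residual to be shown nonpositive, the same four-way case analysis (Newton/gradient direction crossed with $S_k=\emptyset$/$S_k\neq\emptyset$) driven by \eqref{condition}, \eqref{existence-alpha-facts-3} and \eqref{nabla-T-f-0}, and the same Armijo backtracking argument yielding $\alpha_k\geq\beta\overline{\alpha}$. One small imprecision worth noting: because the parameter range couples $\tau$ and $\alpha$ (one needs $\tau\leq\min\{\alpha\delta/(nL^2),\alpha/n\}$, i.e.\ $\alpha\geq\max\{n\tau L^{2}/\delta,\,n\tau\}$), the decrease inequality is \emph{not} available "for every $\alpha\in(0,\overline{\alpha}]$" at fixed $\tau$ as asserted at the start of your second part; however, $\tau\in(0,\overline{\tau})$ makes it valid on $[\beta\overline{\alpha},\overline{\alpha}]$ (exactly as in the paper), and since your argument only invokes it at $\alpha=\beta^{m^*}\in(\beta\overline{\alpha},\overline{\alpha}]$, the conclusion stands.
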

\begin{proof} If $0<\alpha\leq\overline{\alpha}$ and $0<\delta\leq\min \{1,2L\}$, we have
 \begin{eqnarray}
\label{alpha-L}
\alpha\leq \frac{2(1-\sigma)\delta}{L},~~~~\alpha \leq \frac{ 1-2\sigma }{{L}/ \delta  -\sigma}  \leq \frac{ 1-2\sigma }{\max\{0, {L}  -\sigma\}}. \end{eqnarray}
Since $f$ is  strongly smooth, we obtain that
 \begin{eqnarray*}
\label{2f-2f}  &&2f(\bx^{k}(\alpha))- 2f(\bx^{k}) -  2\alpha\sigma\langle g^{k}, \bd^{k}\rangle  \\
 &\overset{(\ref{strong-smooth})}{\leq}&
 2\langle g^{k}, \bx^{k}(\alpha)-  \bx^{k}\rangle+  L  \|\bx^{k}(\alpha)-  \bx^{k}\|^2-  2\alpha\sigma\langle g^{k}, \bd^{k}\rangle \nonumber\\
%  &=&2\langle g^{k}, \bx^{k}(\alpha)-  \bx^{k}\rangle+  L  \|\bx^{k}(\alpha)-  \bx^{k}\|^2-2\alpha\sigma\langle g^{k}, \bd^{k}\rangle+2\alpha\sigma\langle g^{k}, \bd^{k}\rangle\nonumber\\
 &\overset{(\ref{xk-alpha})}{=}&  \alpha(1-\sigma)2\langle g^{k}_{T_{k} } ,  \bd^{k}_{T_{k} }\rangle-(1-\alpha\sigma)2\langle g^{k}_{\overline T_{k} }  ,  \bx^{k}_{\overline T_{k} }\rangle+  L \left[\alpha^2\|\bd^{k}_{T_{k} }\|^2+ \|\bx^{k}_{\overline T_{k} }\|^2\right]\nonumber\\
 &\overset{(\ref{clarify})}{=} &  \alpha(1-\sigma)2\langle g^k_{T_{k} }  ,  \bd^{k}_{T_{k} }\rangle-(1-\alpha\sigma)2\langle g^{k}_{ J_{k} },  \bx^{k}_{J_{k}}\rangle +L \left[\alpha^2\|\bd^{k}_{T_{k} }\|^2+ \|\bx^{k}_{\overline T_{k} }\|^2\right]=:\psi.\nonumber
 \end{eqnarray*}
To prove \eqref{alpha-decreasing-property}, one  needs to show $\psi\leq0$. Similar to the proof of \cref{descent-direction}, we consider two cases. \textbf{Case i)}  {$S_k= \emptyset$}. Step 1 in  \Cref{framework-SNSCO} sets $T_{k} = T_{k-1}$, and thus $J_k=T_{k-1}\setminus T_{k}=\emptyset$. Then we obtain
\begin{eqnarray}%\label{rem-tua-delta}
\psi&=&\alpha(1-\sigma)2\langle g^k_{T_{k} }  ,  \bd^{k}_{T_{k} }\rangle  +L  \alpha^2\|\bd^{k}_{T_{k} }\|^2\nonumber\\
&&
\begin{cases}
\overset{(\ref{condition} )}{\leq} -2\alpha(1-\sigma) \delta \|\bd^{k}  \|^2  + L  \alpha^2\|\bd^{k}_{T_{k} }\|^2 ,&\text{if}~\bd^k~ \text{is from}~ (\ref{d-k-nonsingular})\\
 \overset{(\ref{d-k-singular} )}{=} -2\alpha(1-\sigma) \|\bd^{k}_{T_{k} }\|^2  +L  \alpha^2\|\bd^{k}_{T_{k} }\|^2,&\text{if}~\bd^k~ \text{is from}~ (\ref{d-k-singular})
\end{cases}
\nonumber \\
&\leq& -2\alpha(1-\sigma) \delta \|\bd^{k}  \|^2  + L  \alpha^2\|\bd^{k}\|^2\nonumber \\
\label{rem-tua-delta}&=&\alpha(L  \alpha-2(1-\sigma) \delta) \|\bd^{k}  \|^2\overset{(\ref{alpha-L} )}{\leq} 0,
 \end{eqnarray}
 where the third inequality is due to $\delta\leq 1$, $\|\bd^{k}  \|^2 =\|\bd^{k} _{T_{k} }\|^2 $.

\textbf{Case ii)}  {$S_k\neq \emptyset$}. If   $\bd^k$ is from (\ref{d-k-nonsingular}), then we have
 \begin{eqnarray*}
\psi&\overset{\eqref{fd-TT}}  {\leq}& \alpha(1-\sigma) \left[- 2\delta \|\bd^{k}  \|^2 +  ({1}/{2\tau})\|\bx^{k}_{\overline T_{k} }\|^2\right]   + L  \alpha^2\|\bd^{k}_{T_{k} }\|^2 \\
&\overset{\eqref{xk-T1-T1}}  {+}& (1-\alpha\sigma)\left[ n\tau L^2 \| \bd^k\|^2-\tau  \| g^k_{T_{k-1}}  \|^2- ({1}/{\tau}) \|\bx^k_{\overline T_{k}}\|^2 \right] +L \|\bx^{k}_{\overline T_{k} }\|^2\\
&\leq&c_1\| \bd^k\|^2  + c_2\|\bx^k_{\overline{ T }_{k}}\|^2-(1-\alpha\sigma)\tau  \| g^k_{T_{k-1}}  \|^2\\
&\leq& c_1\| \bd^k\|^2  + c_2\|\bx^k_{\overline{ T }_{k}}\|^2,
 \end{eqnarray*}
where $1-\alpha\sigma>0$ due to $0<\alpha<1,0<\sigma\leq1/2$ and $c_1$ and $c_2$ are given by
\begin{eqnarray*}c_1&:=& -\alpha(1-\sigma)2\delta +(1-\alpha\sigma){ {n\tau {L}^2}}+ {L}\alpha^2, \\
&\leq& -\alpha(1-\sigma)2\delta +(1-\alpha\sigma)\delta \alpha+ {L}\alpha^2 \hspace{0.6cm} {\rm by}~1-\alpha\sigma >0, \tau\leq   { \alpha \delta  }/{(n{L}^2)}  \\
&=& \alpha \left[( {L}-\sigma\delta )\alpha-(1-2\sigma)\delta \right]\leq0,\hspace{1.05cm} {\rm by}~L-\sigma\delta>0, 1-2\sigma>0, \eqref{alpha-L}\\
c_2&:=& \alpha(1-\sigma)/(2\tau)-(1-\alpha\sigma)/\tau + {L}\\
&\leq&(1-\alpha\sigma)/(2\tau)-(1-\alpha\sigma)/\tau + {L}\hspace{0.8cm}{\rm by}~1-\alpha\sigma>0\\
&\leq&-(1-\alpha\sigma)/(2\tau) + {L}\leq 0. \hspace{2.0cm} {\rm by}~1-\alpha\sigma>0, \tau\leq {1}/{(4{L})}  \end{eqnarray*}
If $\bd^k$ is updated by  (\ref{d-k-singular}), namely $\bd^{k}_{T_{k}}  =-g^k_{T_{k}}$, then
 \begin{eqnarray*}
\psi&\overset{\eqref{d-k-singular}}  {\leq}& -2\alpha(1-\sigma)  \|\bd^{k}_{T_{k}}  \|^2     + L  \alpha^2\|\bd^{k}_{T_{k} }\|^2 \\
&\overset{ \eqref{existence-alpha-facts-3}}  {+}& (1-\alpha\sigma)\left[ n\tau  \| g^k_{T_{k}}  \|^2-\tau  \| g^k_{T_{k-1}}  \|^2- ({1}/{\tau}) \|\bx^k_{\overline T_{k}}\|^2 \right] +L \|\bx^{k}_{\overline T_{k} }\|^2\\
&\overset{ \eqref{d-k-singular}}  {\leq}& c_3\| \bd^k_{T_{k}}\|^2  + c_4\|\bx^k_{\overline{ T }_{k}}\|^2-(1-\alpha\sigma)\tau  \| g^k_{T_{k-1}}  \|^2,
%&\leq&  c_3\| \bd^k_{T_{k}}\|^2  + c_4\|\bx^k_{\overline{ T }_{k}}\|^2,
 \end{eqnarray*}
  where $c_3$ and $c_4$ are given by
  \begin{eqnarray*}c_3&:=& -2\alpha(1-\sigma)+(1-\alpha\sigma){ {n\tau }}+ {L}\alpha^2  \\
  &\leq&-2\alpha(1-\sigma)+(1-\alpha\sigma)\alpha+ {L}\alpha^2  \hspace{1.1cm} {\rm by}~1-\alpha\sigma>0,\tau\leq {\alpha}/{n}\\
&=&\alpha\left[( {L}-\sigma )\alpha-(1-2\sigma) \right]\\%\hspace{1.2cm} {\rm by}~ L-\sigma>0, 1-2\sigma>0, \eqref{alpha-L} \\
&\leq&\alpha\left[\max\{0, {L}-\sigma\}\alpha-(1-2\sigma) \right]\leq0 \hspace{0.6cm} {\rm by}~1-2\sigma>0, \eqref{alpha-L} \\
c_4&:=& -(1-\alpha\sigma)/\tau+ {L}\\
%&=&(1/\tau-\tau L^2_{2s})\sigma \alpha-(1/\tau-\tau L^2_{2s}-{L})\\
&\leq&-1/(2\tau)+{L}\leq0,\hspace{3.4cm} {\rm by}~1-\alpha\sigma\geq  { 1}/{2},\tau\leq {1}/({4{L}}) 
\end{eqnarray*}
Thus we verify  \eqref{alpha-decreasing-property}. If further $\tau\in(0, \overline \tau)$, then for any $\alpha\in[\beta \overline  \alpha,   \overline \alpha]$, one can check that
 $$0<\tau\overset{ \eqref{alpha-r}}  {<}\min\left\{ \overline{\alpha} \delta  \beta/(n{L}^2),~\overline{\alpha}\beta/n,~1/(4{L}) \right\}\leq\min\left\{ \alpha \delta / (nL ^2),~ \alpha/n ,~1/(4{L})\right\}.$$
 Therefore, (\ref{alpha-decreasing-property}) holds for any for any $\alpha\in[\beta \overline  \alpha,   \overline \alpha]$. 
% and \eqref{decreasing-direction} that
% $\langle g^k, \bd^{k}\rangle  \leq 0$ shows
%  \begin{eqnarray}\label{decreasing-property-0}
%f(\bx^{k}(\alpha))- f(\bx^{k}) &\leq& \sigma \alpha \langle g^{k} ,  \bd^{k} \rangle\leq \sigma \beta \overline  \alpha \langle g^{k} ,  \bd^{k} \rangle.
% \end{eqnarray}
Finally, the Armijo-type step size rule  means  that  $\{\alpha_k\}$ must be bounded from below by $\beta \overline \alpha$, that is,
\begin{equation} \label{Positive-Lower-Bound}
\inf_{k\geq 0}\{\alpha_k\} \geq \beta \overline \alpha > 0.
\end{equation}
The whole proof is completed.\qed
\end{proof}
\Cref{existence-alpha} allows us to conclude that the objective $f$ is strictly decreasing for each step, and the difference of two consecutive iterates and the entries of the stationary equation will vanish.
\begin{lemma} \label{inf-alpha-xk1-xk}
Let $f$ be  strongly smooth with $L>0$ and $\overline{\tau}$ be defined as $(\ref{alpha-r})$. Let  $ \{\bx^k\}$ be the sequence generated by {\nzno} with $\tau\in(0,\overline{\tau})$
and $\delta \in(0,\min \{1,2{L}\})$. Then  $\{f(\bx^k)\}$ is a strictly nonincreasing sequence and  
\begin{eqnarray}\label{Fk-gk-0}\lim_{k\rightarrow\infty}\max\left\{~\|F_{\tau}(\bx^{k};T_{k})\|,~\|\bx^{k+1}-\bx^k\|,~\| g^k_{T_{k-1}}  \|,~\| g^k_{T_{k}}  \|\right\}=0.
\end{eqnarray}
\end{lemma}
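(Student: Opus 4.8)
The plan is to turn the Armijo acceptance test into a telescoping estimate on the function values. By \cref{existence-alpha} the accepted step size $\alpha_k=\beta^{m_k}$ satisfies \eqref{armijio}, so $f(\bx^{k+1})=f(\bx^k(\alpha_k))\leq f(\bx^k)+\sigma\alpha_k\langle g^k,\bd^k\rangle$. Substituting the descent estimate \eqref{decreasing-direction} of \cref{descent-direction} yields
$$f(\bx^k)-f(\bx^{k+1})\ \geq\ \sigma\alpha_k\Big(\rho\|\bd^k\|^2+\tfrac{\tau}{2}\|g^k_{T_{k-1}}\|^2\Big)\ \geq\ 0,$$
which already shows $\{f(\bx^k)\}$ is nonincreasing, and strictly so whenever $\bd^k\neq0$. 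Since $f$ is bounded from below by hypothesis, $\{f(\bx^k)\}$ converges, and hence the left-hand side tends to $0$.

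Next I would invoke the uniform lower bound $\alpha_k\geq\beta\overline\alpha>0$ from \eqref{Positive-Lower-Bound}, together with $\rho>0$ (also from \cref{descent-direction}). Replacing $\alpha_k$ by $\beta\overline\alpha$ in the displayed inequality and letting $k\to\infty$ forces both $\rho\|\bd^k\|^2\to0$ and $\tfrac{\tau}{2}\|g^k_{T_{k-1}}\|^2\to0$; as $\rho,\tau>0$ are fixed, this gives $\|\bd^k\|\to0$ and $\|g^k_{T_{k-1}}\|\to0$, i.e.\ two of the four quantities in \eqref{Fk-gk-0}.

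The remaining three follow from $\|\bd^k\|\to0$ by elementary bookkeeping. For the step difference, \eqref{xk-alpha} with $\alpha_k\le1$ gives $\|\bx^{k+1}-\bx^k\|^2=\alpha_k^2\|\bd^k_{T_k}\|^2+\|\bx^k_{\overline T_k}\|^2\le\|\bd^k_{T_k}\|^2+\|\bd^k_{\overline T_k}\|^2=\|\bd^k\|^2$, using $\bd^k_{\overline T_k}=-\bx^k_{\overline T_k}$ from \eqref{clarify}; hence $\|\bx^{k+1}-\bx^k\|\le\|\bd^k\|\to0$. For $\|g^k_{T_k}\|$ I would split into the two branches of Step~2: if $\bd^k$ is the Newton direction then \eqref{nabla-T-f-0} gives $\|g^k_{T_k}\|\le L\|\bd^k\|$, while if $\bd^k$ is the gradient direction then $\bd^k_{T_k}=-g^k_{T_k}$ by \eqref{d-k-singular}, so $\|g^k_{T_k}\|=\|\bd^k_{T_k}\|\le\|\bd^k\|$; either way $\|g^k_{T_k}\|\to0$. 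Finally the stationary residual \eqref{station} is $F_{\tau}(\bx^k;T_k)=[\,g^k_{T_k};\,\bx^k_{\overline T_k}\,]$, so $\|F_{\tau}(\bx^k;T_k)\|^2=\|g^k_{T_k}\|^2+\|\bd^k_{\overline T_k}\|^2$, and both summands have just been shown to vanish, completing \eqref{Fk-gk-0}.

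The argument is essentially assembly: every nontrivial inequality has already been established in \cref{descent-direction} and \cref{existence-alpha}. The one point needing genuine care is that extracting $\|\bd^k\|\to0$ requires \emph{both} $\rho>0$ and the uniform positive lower bound on $\alpha_k$ \emph{simultaneously}; without the latter, a vanishing function-value gap would fail to control $\|\bd^k\|$. The case split for $\|g^k_{T_k}\|$ and the correct identification of the lower block of $F_{\tau}$ via $\bd^k_{\overline T_k}=-\bx^k_{\overline T_k}$ are the only other places where one must keep the Newton/gradient branches and the $T_k$ versus $T_{k-1}$ index sets straight.
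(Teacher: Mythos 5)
Your proposal is correct and follows essentially the same route as the paper: combine the Armijo acceptance \eqref{armijio} with the descent bound \eqref{decreasing-direction} and the step-size floor \eqref{Positive-Lower-Bound}, use boundedness of $f$ from below to kill the decrease terms, and then recover $\|\bx^{k+1}-\bx^k\|$, $\|g^k_{T_k}\|$ and $\|F_\tau(\bx^k;T_k)\|$ from $\|\bd^k\|\to 0$ via \eqref{xk-alpha}, the Newton/gradient case split with \eqref{nabla-T-f-0}, and $\bd^k_{\overline T_k}=-\bx^k_{\overline T_k}$. In fact you are slightly more careful than the paper's displayed chain, which drops the factor $\sigma\alpha_k$ in front of the $\tfrac{\tau}{2}\|g^k_{T_{k-1}}\|^2$ term, whereas you correctly retain it and only then bound $\alpha_k\geq\beta\overline\alpha$.
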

\begin{proof}
By (\ref{alpha-decreasing-property}),  (\ref{decreasing-direction}) and denoting $c_0:=\sigma \overline \alpha \beta\rho,$ we have
\begin{eqnarray*}
f(x^{k+1})-f(x^k) \leq  \sigma \alpha_k \langle g^k,  \bd^{k} \rangle
&\overset{(\ref{decreasing-direction})}{\leq}& - \sigma \alpha_k \rho\| \bd^k\|^2-\frac{\tau}{2}\| g^k_{T_{k-1}} \|^2\\
&\overset{(\ref{Positive-Lower-Bound})}{\leq}&  -c_0 \| \bd^k\|^2-\frac{\tau}{2}  \| g^k_{T_{k-1}}  \|^2.
\end{eqnarray*}
Then it follows from the above inequality that
\begin{eqnarray*}
 \sum^{\infty}_{k=0}\Big[c_0\| \bd^k\|^2+ \frac{\tau}{2}  \| g^k_{T_{k-1}} \|^2\Big]
& \leq&
 \sum^{\infty}_{k=0}\Big[ f(\bx^k)-f(\bx^{k+1})\Big]\\
 &= & \Big[f(\bx^0)-\lim _{k\rightarrow +\infty}f(\bx^k)\Big]
 <+\infty,
\end{eqnarray*}
where the last inequality is due to $f$ being bounded from below.
Hence  $\| \bd^k\|\rightarrow0, \| g^k_{T_{k-1}} \|\rightarrow0$, which
suffices to $\|\bx^{k+1}-\bx^k\|\rightarrow0$ because of
$$ \|\bx^{k+1}-\bx^k\|^2\overset{(\ref{xk-alpha})}{=}  \alpha_k^2 \| \bd^k_{T_{k}}\|^2+\| \bx^k_{\overline T_{k}}\|^2\leq \| \bd^k_{T_{k}}\|^2+\| \bd^k_{\overline T_{k}}\|^2=\| \bd^k\|^2.$$
The above relation also indicates $\| \bx^k_{\overline T_{k}}\|^2\rightarrow0$. In addition, if $ \bd^k$ is taken from (\ref{d-k-nonsingular}), then $\|g^k_{ T_{k}} \|  \leq  L \|\bd^k\|\rightarrow0$ by (\ref{nabla-T-f-0}). If it is taken from  (\ref{d-k-singular}) then $\|g^k_{ T_{k}}  \|=\| \bd^k_{T_{k}}\|\rightarrow0$. Those together with (\ref{station}) that
$\|F_{\tau}(\bx^{k};T_{k})\|^2= \|g^k_{ T_{k}}  \|^2 + \| \bx^k_{\overline T_{k}}\|^2 \rightarrow0,$ finishing the whole proof.\qed
\end{proof}
We are ready to conclude from \Cref{inf-alpha-xk1-xk} that the index set of $T_K$ can be  {identified within finite steps and the sequence converges to a $\tau$-stationary point or a local minimizer globally, which are presented by the following theorem.
\begin{theorem}[Convergence and identification of $T_k$] \label{whole-sequence-converge}
Let $f$ be  strongly smooth with $L>0$ and $\overline{\tau}$ be defined as $(\ref{alpha-r})$.
Let  $ \{\bx^k\}$ be the sequence generated by {\nzno} with $\tau\in(0,\overline{\tau})$ and $\delta \in(0,\min \{1,2L\})$. Then the following results hold.
\begin{enumerate}
\item[{\rm 1)}]For any sufficiently large $k$, $T_{k}\equiv T_{k-1}\equiv :T_\infty$.
\item[{\rm 2)}]Any accumulating point (say $\bx^*$) of the sequence satisfies
\begin{equation}\label{sta-model}
 \nabla_{ T_\infty}  f (\bx^*)=0, ~~~~\bx^*_{\overline{ T }_\infty}=0,~~~~\supp(\bx^*)\subseteq  T_\infty
\end{equation}
and is non-trivial ($\bx^*\neq0$), and it is necessary a $\tau_*$-stationary point of \eqref{L0O} with 
\begin{equation}\label{sta-model-tau*}
0<\tau_*<
\min\Big\{\overline{\tau}, \min_{i\in\supp(\bx^*)}|x_i^*|/(2\lambda))\Big\}.
\end{equation}
%\begin{equation}\label{sta-model-tau*}
%0<\tau_*<
%\begin{cases}
%\overline{\tau},& \bx^* =0,\\
%\min\{\overline{\tau}, \min_{i\in\supp(\bx^*)}|x_i^*|/(2\lambda)),& \bx^*\neq0.
%\end{cases}
%\end{equation}
%Furthermore, $\bx^*$ is a local minimizer of the above model if $f$ is convex.
\item[{\rm 3)}] If $\bx^*$ is isolated, then the whole sequence converges to $\bx^*$. 
\end{enumerate}
\end{theorem}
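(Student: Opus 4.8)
The plan is to handle the three claims in order, with \Cref{inf-alpha-xk1-xk} (in particular the limit \eqref{Fk-gk-0}) serving as the engine throughout. For claim 1), I would show that $S_k\neq\emptyset$ can happen only finitely often. Whenever $S_k\neq\emptyset$, Step 1 sets $T_k=\widetilde T_k$, so every fresh index $i\in S_k=T_k\setminus T_{k-1}\subseteq T_k$ has $x^k_i=0$ and hence, by \eqref{existence-alpha-facts-21}, satisfies $|\tau g^k_i|^2=|x^k_i-\tau g^k_i|^2\geq 2\tau\lambda$. Consequently $\|g^k_{T_k}\|\geq\|g^k_{S_k}\|\geq\sqrt{2\lambda/\tau}>0$, which contradicts $\|g^k_{T_k}\|\to 0$ from \eqref{Fk-gk-0} once $k$ is large. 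Thus there is a $K$ with $S_k=\emptyset$ for all $k\geq K$, and Step 1 then forces $T_k=T_{k-1}$ for every $k\geq K$; calling this common set $T_\infty$ settles claim 1). I would record two by-products valid for $k\geq K$: $\widetilde T_k\subseteq T_{k-1}=T_\infty$ (since $S_k=\emptyset$) and $\supp(\bx^k)\subseteq T_{k-1}=T_\infty$ by \eqref{supp-T}, so that $x^k_i=0$ for every $i\notin T_\infty$.

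For claim 2), let $\bx^{k_j}\to\bx^*$ along a subsequence. Since $T_{k_j}=T_\infty$ for large $j$, passing to the limit in $F_\tau(\bx^{k_j};T_{k_j})\to 0$ and using continuity of $\nabla f$ yields $\nabla_{T_\infty}f(\bx^*)=0$ and $\bx^*_{\overline T_\infty}=0$, whence $\supp(\bx^*)\subseteq T_\infty$; this is exactly \eqref{sta-model}. The non-triviality $\bx^*\neq 0$ is the delicate part, and here the initialization $\lambda\in(0,\underline\lambda)$ is indispensable. Suppose $\bx^*=0$. Since $\nabla f(0)\neq 0$, choose an index $i$ with $\nabla_i f(0)\neq 0$; by the definition of $\underline\lambda$ in \eqref{lamda_upper} together with $\lambda<\underline\lambda$, one has $\tau|\nabla_i f(0)|>\sqrt{2\tau\lambda}$. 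As $x^{k_j}_i\to 0$ and $g^{k_j}_i\to\nabla_i f(0)$, the threshold quantity $|x^{k_j}_i-\tau g^{k_j}_i|\to\tau|\nabla_i f(0)|>\sqrt{2\tau\lambda}$, so by \eqref{Tk} we get $i\in\widetilde T_{k_j}\subseteq T_\infty$ for large $j$; but $i\in T_\infty$ forces $\nabla_i f(\bx^*)=\nabla_i f(0)=0$, contradicting the choice of $i$. Hence $\bx^*\neq 0$.

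It remains to verify that $\bx^*$ is a $\tau_*$-stationary point by checking the conditions of \Cref{sts} index by index. For $i\in\supp(\bx^*)\subseteq T_\infty$, the relation $\nabla_{T_\infty}f(\bx^*)=0$ gives $\nabla_i f(\bx^*)=0$, while choosing $\tau_*$ small enough that $\sqrt{2\tau_*\lambda}\leq\min_{i\in\supp(\bx^*)}|x^*_i|$ secures $|x^*_i|\geq\sqrt{2\tau_*\lambda}$. For $i\notin\supp(\bx^*)$ I would split: if $i\in T_\infty$ then $\nabla_i f(\bx^*)=0$ and the bound is trivial; if $i\notin T_\infty$, then $x^{k_j}_i=0$ and $i\notin\widetilde T_{k_j}$ give $\tau|g^{k_j}_i|<\sqrt{2\tau\lambda}$, so in the limit $|\nabla_i f(\bx^*)|\leq\sqrt{2\lambda/\tau}\leq\sqrt{2\lambda/\tau_*}$ for any $\tau_*\leq\tau$. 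Since $\tau<\overline\tau$, every such $\tau_*$ also lies in the range \eqref{sta-model-tau*}, so $\bx^*$ satisfies \eqref{tau-sta-cond} and is $\tau_*$-stationary.

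Finally, for claim 3) I would invoke the standard fact that a sequence with vanishing successive differences converges to any isolated accumulation point. Since $\|\bx^{k+1}-\bx^k\|\to 0$ by \eqref{Fk-gk-0} and $\bx^*$ is isolated, fix $\epsilon>0$ so that the closed ball $\{\bx:\|\bx-\bx^*\|\leq\epsilon\}$ contains no other accumulation point. If $\bx^k\not\to\bx^*$, the iterates would enter $\{\|\bx-\bx^*\|<\epsilon/2\}$ infinitely often (as $\bx^*$ is an accumulation point) and leave $\{\|\bx-\bx^*\|\leq\epsilon\}$ infinitely often, forcing them to traverse the compact annulus $\{\epsilon/2\leq\|\bx-\bx^*\|\leq\epsilon\}$; with step lengths tending to zero one extracts a subsequence lying in this annulus, whose limit is an accumulation point distinct from $\bx^*$, a contradiction. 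Hence the whole sequence converges to $\bx^*$. The main obstacle in the whole argument is the non-triviality step, where one must rule out $\bx^*=0$ by reconciling the thresholding rule \eqref{Tk} with the precise role of $\lambda<\underline\lambda$; the other passages to the limit are routine once the stabilization of $T_k$ and \eqref{Fk-gk-0} are established.
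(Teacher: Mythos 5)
Your proposal is correct and follows essentially the same route as the paper's proof: claim 1 via the same contradiction between $\|g^k_{T_k}\|\to 0$ from \eqref{Fk-gk-0} and the threshold bound \eqref{existence-alpha-facts-21} on $S_k$; claim 2 via the same $\underline\lambda$-based argument ruling out $\bx^*=0$ (you verify the conditions of \Cref{sts} index by index, where the paper routes through the stationary equation and \Cref{tau-F} --- equivalent bookkeeping); and claim 3 via the standard isolated-accumulation-point argument, which you prove directly instead of citing \cite{more1983computing}. A minor point in your favor: for $i\notin T_\infty$ you explicitly restrict to $\tau_*\leq\tau$ so that $|\nabla_i f(\bx^*)|\leq\sqrt{2\lambda/\tau}\leq\sqrt{2\lambda/\tau_*}$, a restriction the paper's ``it is easy to check'' step silently needs, and your on-support requirement $\sqrt{2\tau_*\lambda}\leq\min_{i\in\supp(\bx^*)}|x_i^*|$ (i.e., $\tau_*\leq\min_{i\in\supp(\bx^*)}|x_i^*|^2/(2\lambda)$) is the correct squared form of the bound stated in \eqref{sta-model-tau*}.
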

\begin{proof} 1) For any sufficiently large $k$, $T_{k}\equiv T_{k-1}$ indicates $S_k=\emptyset$ by Step 1 in \Cref{framework-SNSCO}. Suppose there is a subsequence $\mathcal K$ of $\{0,1,2,\cdots\}$ such that  $S_k\neq \emptyset, k\in\mathcal K$. Then we have $S_k=\widetilde T_{k}\backslash T_{k-1}= T_{k}\backslash T_{k-1}\neq \emptyset, k\in\mathcal K$. \Cref{inf-alpha-xk1-xk} shows that $g^k_{ T_{k}} \rightarrow 0$, which yields $g^k_{ S_{k}} \rightarrow 0$. This contradicts with $|\tau g^k_{ i}| \geq \sqrt{2\tau\lambda}, i\in S_{k}$ by \eqref{existence-alpha-facts-21}. 

2) Let  $\{\bx^{k_t }\}$ be the convergent subsequence of $\{\bx^{k}\}$  that converges to $\bx^*$. 
Since $\bx^{k_t }\rightarrow \bx^*$ and $\|\bx^{k+1}-\bx^k\| \rightarrow 0$ from \Cref{inf-alpha-xk1-xk}, we have $\bx^{k_t+1 }\rightarrow \bx^*$ and thus $\supp(\bx^*)\subseteq\supp(\bx^{k_t+1 })$ for sufficiently large $k_t$. Then it follows from $\supp(\bx^{k_t+1 })\subseteq T_{k_{t}} \equiv T_\infty$ by (\ref{supp-T}) and claim 1) that $\supp(\bx^*)\subseteq\supp(\bx^{k_t+1 })\subseteq T_\infty.$ Moreover,
 \begin{eqnarray}\label{gTf0}
  \nabla_{T_\infty}f(\bx^*)= \nabla_{T_{k_t}} f(\bx^*)=\underset{{k_t }\rightarrow \infty}{\lim} \nabla_{T_{k_t}} f(\bx^{k_t})=\underset{{k_t }\rightarrow \infty}{\lim} g^{k_t }_{ T_{k_t}}  \overset{(\ref{Fk-gk-0})}{= }0.\end{eqnarray}
Overall, \eqref{sta-model} is true. Next, we claim that $\bx^*\neq0$. Suppose $\bx^*=0$. \Cref{framework-SNSCO} runs infinite steps only when  $\nabla   f (0)\neq0$.  Under such a scenario and $\lambda\in(0,\underline\lambda)$,   by $\bx^{k_t}\rightarrow \bx^*=0$, for sufficiently large $k$, there is a sufficiently small $\varepsilon>0 $ such that
\begin{eqnarray}\label{x-g-tau-lam}
|x_{i}^{k_t}-\tau g^{k_t}_{i} |&\geq& \tau | \nabla_{i}  f (0)|- |x_{i}^{k_t}|-\tau | \nabla_{i}  f (0)-\nabla_{i}  f (\bx^{k_t}) |\nonumber\\
&\overset{\eqref{lamda_upper}}{\geq}& \sqrt{2\tau\underline\lambda}-\varepsilon \geq \sqrt{2\tau \lambda}.
\end{eqnarray}
 Thus $\widetilde T_{k_t}\neq \emptyset$. Recall that in claim 1), $S_k=\emptyset$ for any sufficiently large $k$. This implies $\widetilde T_{k_t}\subseteq  T_{k_t-1}\equiv T_{k_t} \equiv T_{\infty}$. However,  by \eqref{gTf0}, $g^{k_t}_{i} \rightarrow 0$ and $x_{i}^{k_t} \rightarrow 0$, contradicting with \eqref{x-g-tau-lam}. Thus, $\bx^*\neq0$. Now by \eqref{sta-model-tau*}, it is easy to check that
\begin{eqnarray*}%\label{T-supp-*}
T_*:=\supp(\bx^*) &=&\left\{i\in \N_n: x_{i}^*\neq0\right\}\\
&=& \{i\in \N_n:|x_{i}^*-\tau_* \nabla_{i}  f (\bx^*)|\geq\sqrt{2\tau_*\lambda} \}.
 \end{eqnarray*}
This together with $ \nabla_{T_*}  f (\bx^*)=0, \bx^*_{\overline{ T}_*}=0$ from \eqref{sta-model} suffices to 
$F_{\tau_*}(\bx^*;T_*)=0$. Finally, \cref{tau-F} allows us to claim that $ \bx^*$  is   a $\tau_*$-stationary point.

3) The whole sequence converges because of $\bx^*$ being isolated, \cite[Lemma 4.10]{more1983computing} and
 $\|\bx^{k+1}-\bx^k\|\rightarrow0$ from \Cref{inf-alpha-xk1-xk}. \qed \end{proof}

Finally, we would like to see how fast our proposed method \nzno\ converges.
To proceed that, we need the  locally Lipschitz continuity. We say the Hessian of $f$ is locally Lipschitz continuous around $\bx^*$ with a constant $M_*>0$ if  
$$\|\nabla^2f(\bx)-\nabla^2f(\bx')\|_2\leq M_*\| \bx - \bx'\|.$$
for any points $\bx,\bx'$ in the neighbourhood of $\bx$. In addition, we also need that $f$ is locally strongly convex with a constant $\ell_*>0$ around $\bx^*$. As we mentioned before, the constants $M_*$ and $\ell_*$ depend  on the point $\bx^*$. Now we are able to establish the following results.
\begin{theorem}[Global and quadratic convergence] \label{converge-rate}
Let  $ \{\bx^k\}$ be the sequence generated by \nzno\
and $\bx^*$ be one of its accumulating points.  
Suppose $f$ is  strongly smooth with constant $L>0$  and locally strongly convex with $\ell_*>0$ around $\bx^*$. Choose $\tau\in(0,\overline{\tau})$ and
$\delta \in(0,\min \{1,\ell_*\})$. Then the following results hold.
\begin{enumerate}
\item[{\rm 1)}] The whole sequence converges to $\bx^*$, namely, $\bx^*$ is the limit point.
\item[{\rm 2)}] The Newton direction is always accepted for sufficiently large $k$.
\item[{\rm 3)}]  Furthermore, if the Hessian of $f$ is locally Lipschitz continuous around $\bx^*$ with constant $M_*>0$. Then for sufficiently large $k$,
\begin{eqnarray}
\|\bx^{k+1}-\bx^*\|&\leq& M_*/({2\ell_*})\|\bx ^k-\bx ^*\|^2.
%\| F_{\tau}(\bx^{k+1};T_{k+1})\| &\leq& {M \sqrt{L^2+1}}/{\min\{\ell _*^3,\ell_* \}} \|F_{\tau} (\bx^k; T_k)\|^2.
\end{eqnarray}
\end{enumerate}
\end{theorem}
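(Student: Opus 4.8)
The plan is to leverage \cref{whole-sequence-converge}, which already does the heavy lifting on the active set: for all large $k$ one has $T_k\equiv T_{k-1}\equiv T_\infty$, every accumulation point $\bx^*$ is nonzero with $\nabla_{T_\infty}f(\bx^*)=0$, $\bx^*_{\overline T_\infty}=0$ and $\supp(\bx^*)\subseteq T_\infty$ by \eqref{sta-model} (note $\delta\in(0,\min\{1,\ell_*\})\subseteq(0,\min\{1,2L\})$ since $\ell_*\le L$, so that theorem indeed applies). The key reduction I would exploit is that, once $k$ is large, \eqref{supp-T} forces $\supp(\bx^k)\subseteq T_\infty$, hence $\bx^k_{\overline T_k}=0$, $J_k=\emptyset$ and $G_k=0$; the iteration \eqref{d-k-nonsingular} then collapses to the plain Newton step $H_k\bd^k_{T_\infty}=-g^k_{T_\infty}$, $\bd^k_{\overline T_\infty}=0$, for the smooth equation $\nabla_{T_\infty}f(\bx)=0$ in the reduced variable $\bx_{T_\infty}$, and $\bx^k-\bx^*$ stays supported on $T_\infty$. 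On a neighbourhood of $\bx^*$, local strong convexity gives $H_k=\nabla^2_{T_\infty}f(\bx^k)\succeq\ell_* I$ (a principal submatrix inherits the Rayleigh lower bound, via zero-padding), while strong smoothness gives $H_k\preceq L I$; in particular $H_k$ is invertible with $\|H_k^{-1}\|_2\le 1/\ell_*$.

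For claim 1) I would first show $\bx^*$ is isolated. Every accumulation point solves $\nabla_{T_\infty}f(\cdot)=0$ on the subspace $\{\bx:\bx_{\overline T_\infty}=0\}$, and the reduced gradient there is strongly monotone (its Jacobian is $H\succeq\ell_* I$), so the solution is locally unique; thus $\bx^*$ is isolated, and whole-sequence convergence follows immediately from \cref{whole-sequence-converge} 3).

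For claim 2) I would verify the acceptance test \eqref{condition} directly in the reduced setting. Since $\bx^k_{\overline T_k}=0$ and $\bd^k_{\overline T_k}=0$ for large $k$, we have $\|\bd^k\|^2=\|\bd^k_{T_\infty}\|^2$, and the reduced Newton system gives $\langle g^k_{T_k},\bd^k_{T_k}\rangle=-\langle\bd^k_{T_\infty},H_k\bd^k_{T_\infty}\rangle\le-\ell_*\|\bd^k\|^2\le-\delta\|\bd^k\|^2+\tfrac1{4\tau}\|\bx^k_{\overline T_k}\|^2$, where the last inequality uses $\delta<\ell_*$ and $\bx^k_{\overline T_k}=0$. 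Hence \eqref{condition} holds and the Newton branch \eqref{d-k-nonsingular} is always selected.

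For claim 3) I would argue in two steps. First, the unit step is eventually accepted: a second-order Taylor expansion with Lipschitz Hessian yields $f(\bx^k+\bd^k)=f(\bx^k)-\tfrac12\langle\bd^k_{T_\infty},H_k\bd^k_{T_\infty}\rangle+R$ with $|R|\le(M_*/6)\|\bd^k\|^3$, while the Armijo target is $f(\bx^k)-\sigma\langle\bd^k_{T_\infty},H_k\bd^k_{T_\infty}\rangle$; since $\sigma<1/2$, $H_k\succeq\ell_* I$, and $\|\bd^k\|\to0$ (from \cref{inf-alpha-xk1-xk} together with $\inf_k\alpha_k\ge\beta\overline\alpha>0$ of \cref{existence-alpha}), the condition \eqref{armijio} holds with $m_k=0$, i.e.\ $\alpha_k=1$, for large $k$. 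Second, with the full Newton step I would use $\bx^{k+1}_{T_\infty}-\bx^*_{T_\infty}=H_k^{-1}\bigl[H_k(\bx^k-\bx^*)-(\nabla_{T_\infty}f(\bx^k)-\nabla_{T_\infty}f(\bx^*))\bigr]$, write the bracket (using that $\bx^k-\bx^*$ is supported on $T_\infty$) as $\int_0^1[\nabla^2_{T_\infty}f(\bx^k)-\nabla^2_{T_\infty}f(\bx^*+t(\bx^k-\bx^*))]\,dt\,(\bx^k-\bx^*)_{T_\infty}$, bound its norm by $(M_*/2)\|\bx^k-\bx^*\|^2$ through Lipschitz continuity, and combine with $\|H_k^{-1}\|_2\le1/\ell_*$ and $\bx^{k+1}_{\overline T_\infty}=0=\bx^*_{\overline T_\infty}$ to obtain $\|\bx^{k+1}-\bx^*\|\le(M_*/(2\ell_*))\|\bx^k-\bx^*\|^2$.

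I expect the main obstacle to be the bookkeeping that legitimizes this reduction to the fixed support: confirming that for large $k$ the iterates, the Newton direction and the limit all lie in the subspace indexed by $T_\infty$, so that $G_k$ and $\bx^k_{\overline T_k}$ vanish and the full-space Taylor expansions collapse cleanly to the $T_\infty$-block. Once that is secured, claims 2) and 3) are the classical local analysis of Newton's method, and the only genuinely new ingredient is the isolation argument underpinning claim 1).
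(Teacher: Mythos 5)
Your proposal is correct, and it reaches the paper's conclusions by a partly different route than the paper's own proof. The scaffolding is the same — prove isolation of $\bx^*$ and invoke \cref{whole-sequence-converge} 3), then verify \eqref{condition} for large $k$, then run the local Newton analysis — but three steps differ. (i) For claim 1 the paper shows directly that $\bx^*$ is a strict, isolated local minimizer of \eqref{L0O} in an explicit neighbourhood $N(\bx^*)$, splitting into the cases $T_*=\supp(\bx)$ and $T_*\subsetneq\supp(\bx)$ and using $\lambda$ to absorb the linear term $\langle \nabla_{\overline T_*}f(\bx^*),\bx_{\overline T_*}\rangle$; you instead obtain isolation from strong monotonicity of the reduced map $\bx_{T_\infty}\mapsto\nabla_{T_\infty}f(\bx)$ on the subspace $\{\bx:\bx_{\overline T_\infty}=0\}$, using that by \eqref{sta-model} every accumulation point solves this system for the \emph{same} $T_\infty$. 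Your argument is shorter but certifies less: the paper's version additionally establishes local optimality of the limit for \eqref{L0O}. (ii) For claim 2 the paper verifies \eqref{condition} without first zeroing out $J_k$, via the algebraic identity \eqref{bounded-H} of \cref{lemma-0} combined with the eigenvalue bounds \eqref{Hk2s}; your support-stabilization reduction is legitimate — \eqref{supp-T} holds at every iteration regardless of which direction is taken, and \cref{whole-sequence-converge} 1) fixes $T_k\equiv T_{k-1}\equiv T_\infty$, so indeed $\bx^k_{\overline T_k}=0$, $J_k=\emptyset$ and $G_k$ is vacuous for large $k$ — and it collapses the verification to the one-line Rayleigh bound $\langle g^k_{T_k},\bd^k_{T_k}\rangle=-\langle\bd^k_{T_k},H_k\bd^k_{T_k}\rangle\le-\ell_*\|\bd^k\|^2$, buying simplicity at the cost of applying only after stabilization (which suffices here). (iii) For claim 3 the paper deduces eventual unit step sizes by combining the superlinear estimate \eqref{first-fact} with the cited result \cite[Theorem 3.3]{facchinei1995minimization}, whereas you give a self-contained cubic-remainder Taylor argument using $\sigma<1/2$, $H_k\succeq\ell_* I$ and $\|\bd^k\|\to0$ (which in your regime also follows from $\|g^k_{T_k}\|\to0$ in \eqref{Fk-gk-0} and $\|H_k^{-1}\|_2\le 1/\ell_*$); the final quadratic estimate is then essentially the paper's computation \eqref{facts-7}--\eqref{facts-8}, carried out in the reduced $T_\infty$-block rather than with the row-block $\nabla^2_{T_k:}f$ and the convexity inequality \eqref{facts-0-5-1}. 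Both routes are sound; yours is more elementary and self-contained, the paper's is more general (it does not presuppose the reduction in claim 2) and yields the stronger local-minimality statement along the way.
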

\begin{proof}  1) Denote $T_*:=\supp(\bx^*)$.
\Cref{whole-sequence-converge} shows that  $ \nabla_{T_*} f (\bx ^{*})=0$ and $\bx^*\neq0$. Consider a local region $N(\bx^*):=\{\bx\in\R^n:\|\bx-\bx^*\|<\epsilon_*\}$, where $$\epsilon_*:=\min\Big\{\lambda/(2 \|\nabla_{\overline T_*} f(\bx^*)\|), \min_{i\in T_*}|x^*_i|\Big\}.$$ For any $\bx(\neq\bx^*)\in N(\bx^*)$, we have $T_*\subseteq \supp(\bx)$. In fact if there is a $j$ such that $j\in T_*$ but $j\notin \supp(\bx) $, then we derive a contradiction:
 $$\epsilon_* \leq\min_{i\in T_*}|x^*_i|\leq |x_j^*|=|x_j^*- x_j|\leq \|\bx-\bx^*\|_2 <\epsilon_*. $$
As $f$ is locally strongly convex with $\ell_* >0$ around $\bx^*$, for any $\bx(\neq\bx^*)\in N(\bx^*)$, it holds
\begin{eqnarray*}
&&f(\bx)+\lambda\|\bx\|_0-f(\bx^*)-\lambda\|\bx^*\|_0\\
& \geq & \langle \nabla f(\bx^*), \bx-\bx^*\rangle+(\ell_*/2) \|\bx-\bx^*\|^2+\lambda\|\bx\|_0-\lambda\|\bx^*\|_0\\
%&=& \langle \nabla_{T_*} f(\bx^*),(\bx-\bx^*)_{T_*}\rangle +\langle \nabla_{\overline T_*} f(\bx^*),(\bx-\bx^*)_{\overline T_*}\rangle+(\ell /2) \|\bx-\bx^*\|^2 \\
&= &  \langle \nabla_{\overline T_*} f(\bx^*), \bx _{\overline T_*}\rangle +(\ell_* /2) \|\bx-\bx^*\|^2+\lambda\|\bx\|_0 -\lambda\|\bx^*\|_0=:\phi.
\end{eqnarray*}
where the first equality is owing to $ \nabla_{T_*} f (\bx ^{*})=0$. Clearly, if $T_* = \supp(\bx)$, then $\bx _{\overline T_*}=0, \|\bx\|_0=\|\bx^*\|_0$ and hence  $\phi=(\ell_*/2) \|\bx-\bx^*\|^2>0$. If $T_* \neq(\subseteq) \supp(\bx)$, then $\|\bx\|_0\geq\|\bx^*\|_0+1$ and thus it gives rise to
\begin{eqnarray*} 
\phi& \geq &  
 -\|\nabla_{\overline T_*} f(\bx^*)\|\| \bx _{\overline T_*}\|+(\ell_* /2) \|\bx-\bx^*\|^2+\lambda\\
 & \geq &  
 -\|\nabla_{\overline T_*} f(\bx^*)\|\| \bx-\bx^*\|+(\ell_* /2) \|\bx-\bx^*\|^2+\lambda\\
  & \geq &  
 -\lambda/2+(\ell_* /2) \|\bx-\bx^*\|^2+\lambda>0.
\end{eqnarray*}
Both cases exhibit that $\bx^*$ is a strictly local minimizer of (\ref{L0O}) and is unique in $N(\bx^*)$, namely, $\bx^*$ is isolated local minimizer in $N(\bx^*)$. So  the whole sequence tends to $\bx^*$ by \Cref{whole-sequence-converge} 3).

2) We first verify $H_k$ is nonsingular when $k$ is sufficiently large and
$$ \langle g^k_{T_{k} } ,  \bd^{k}_{T_{k} }\rangle\leq-\delta \|\bd^{k}\|^2+
\|\bx^{k}_{\overline T_{k} }\|^2/(4\tau). $$
Since $f$ is strongly smooth with ${L}$ and locally strongly convex with $\ell_*  $ around $\bx^*$, it follows
\begin{eqnarray}\label{Hk2s}
{\ell_* }\leq \lambda_i(\nabla^2_{T_{k}\cup J_{k}  } f(\bx^k)), \lambda_i(H_{k}),
\lambda_i(\nabla^2_{J_{k} } f(\bx^k))
 \leq L,
%{\ell }I_{|T_{k}\cup J_{k}|}\preceq\nabla^2_{T_{k}\cup J_{k}  } f(\bx^k)
%\preceq LI_{|T_{k}\cup J_{k}|},\\
%\label{Hks}{\ell }I_{|T_{k}|}\preceq
%   H_k   \preceq L  I_{|T_{k}|},~~ {\ell }I_{|J_{k}|}\preceq
%  \nabla^2_{J_{k}  } f(\bx^k) \preceq LI_{{|J_{k}|}}.
\end{eqnarray}
where $\lambda_i(A)$ is the $i$th largest eigenvalue of $A$. Direct verification yields that
\allowdisplaybreaks      \begin{eqnarray*}
2\langle g^k_{T_{k} }  ,  \bd^{k}_{T_{k} }\rangle
   &\overset{(\ref{bounded-H})}{=}& -\langle \bd^{k}_{T_{k}\cup J_{k} },
   \nabla^2_{T_{k}\cup J_{k}  } f(\bx^k)\bd^{k}_{T_{k}\cup J_{k} }\rangle -\langle H_k \bd^{k}_{ T_{k}}, \bd^{k}_{  T_{k}}\rangle+
\langle \bd^{k}_{ J_{k} },   \nabla^2_{ J_{k}  }f(\bx^k )\bd^{k}_{ J_{k} }\rangle \\
 &\overset{(\ref{Hk2s})}{\leq}&- {\ell_* }\left[\|\bd^{k}_{T_{k}\cup J_{k} }\|^2+\|\bd^{k}_{T_{k} }\|^2\right] + L \|\bx^{k}_{\overline T_{k} }\|^2 \nonumber\\
  &=&- {\ell_* }\left[\|\bd^{k}_{T_{k}\cup J_{k} }\|^2+  \|\bd^{k}_{T_{k} }\|^2+  \|\bd^{k}_{ J_{k} }\|^2-  \|\bd^{k}_{ J_{k} }\|^2\right] + L\|\bx^{k}_{\overline T_{k} }\|^2 \nonumber\\
    &=&- 2{\ell_* } \|\bd^{k}_{T_{k}\cup J_{k} }\|^2+\ell \|\bd^{k}_{ J_{k} }\|^2  + L \|\bx^{k}_{\overline T_{k} }\|^2 \\
    & \overset{(\ref{clarify})}{=}&- 2{\ell_* }\|\bd^{k} \|^2 + ({\ell_* }+ L) \|\bx^{k}_{\overline T_{k} }\|^2 \nonumber\\
   & \leq& - 2{\ell_* }\|\bd^{k} \|^2 + 2L \|\bx^{k}_{\overline T_{k} }\|^2 \\
   &\leq&-2\delta \|\bd^k \|^2+  \|\bx^{k}_{\overline T_{k}}\|^2/(2\tau),
\end{eqnarray*}
where   the last inequality  is owing to $\delta\leq{\ell_* }$ and  $\tau<\overline \tau\leq  1/(4{L})$.
This proves that $\bd^k$ from (\ref{d-k-nonsingular}) is always admitted for sufficiently large $k$.

  3) By \Cref{whole-sequence-converge} 2), for sufficiently large $k$, we have \eqref{sta-model}, which suffices  to
\begin{eqnarray}\label{F0-s-1}
\bx^*_{\overline{T}_k}=0, ~~  \nabla_{T_{k}}  f (\bx^*) =0.\end{eqnarray}
For any $0\leq t \leq1$, by letting $\bx(t):=\bx^* + t (\bx^k-\bx^*)$.
%It is easy to verify that $\|\bx(t)-\bx^*\|< \delta_f$.
the Hessian of $f$ being  locally Lipschitz continuous at $\bx^*$ derives
\begin{eqnarray}
\label{facts-0-4} &&\|\nabla^{2}_{{T_{k} }:} f (\bx^k)-\nabla_{{T_{k} }:}^{2} f (\bx(t)) \|_2 
%\max\left\{\|\nabla^{2}_{{T_{k} }\cdot} f (\bx^k)-\nabla_{{T_{k} }\cdot}^{2} f (\bx_t)\|,~\|\nabla^{2}_{{\overline T}^k\cdot} f (\bx^k)-\nabla_{{\overline T_{k} }\cdot}^{2} f (\bx_t)\|\right\}\\
%\leq \|\nabla^{2} f (\bx^k)-\nabla^{2} f (\bx(t))\|\\
%& \leq&  \|\nabla^{2} f (\bx^k)-\nabla^{2} f (\bx^*)\|+\|\nabla^{2} f (\bx^*)-\nabla^{2} f (\bx(t))\| \nonumber\\
\leq  M_*\|\bx^k-\bx(t)\|=(1-t)M_*\|\bx^k- \bx^*\|.
\end{eqnarray}
 Moreover, by Taylor expansion, one has
\begin{eqnarray}\label{facts-0-3}
\nabla  f (\bx ^k)-\nabla  f (\bx ^*)=\int_0^1\nabla^{2} f ( \bx(t))(\bx ^k-\bx ^*)dt.
\end{eqnarray}
Now, we have the following chain of inequalities
 \begin{eqnarray}
\|\bx^{k+1}-\bx^*\|^2&=& \| \bx^{k+1}_{ T_{k} }-\bx^*_{ T_{k} }\|^2+\| \bx^{k+1}_{ \overline T_{k} }-\bx^*_{  \overline T_{k} }\|^2 \nonumber\\
&\overset{(\ref{xk-alpha},\ref{F0-s-1})}{=}&\| \bx^{k+1}_{ T_{k} }-\bx^*_{ T_{k} }\|^2\overset{(\ref{xk-alpha})}{=}\| \bx^{k}_{ T_{k} }-\bx^*_{ T_{k} }+\alpha_k \bd^{k}_{ T_{k} }\|^2\nonumber\\
&=&\|(1-\alpha_k) (\bx^{k}_{ T_{k} }-\bx^*_{ T_{k} })+\alpha_k (\bx^{k}_{ T_{k} }-\bx^*_{ T_{k} }+ \bd^{k}_{ T_{k} })\|^2\nonumber\\
\label{facts-0-5-1}&\leq&(1-\alpha_k)\| \bx^{k}_{ T_{k} }-\bx^*_{ T_{k} }\|^2+\alpha_k \|\bx^{k}_{ T_{k} }-\bx^*_{ T_{k} }+\bd^{k}_{ T_{k} }\|^2 \\
\label{facts-0-5}&\overset{(\ref{Positive-Lower-Bound})}{\leq}&(1-\overline{\alpha}\beta )\| \bx^{k} -\bx^* \|^2+\overline{\alpha} \|\bx^{k}_{ T_{k} }-\bx^*_{ T_{k} }+\bd^{k}_{ T_{k} }\|^2,
\end{eqnarray}
where (\ref{facts-0-5-1}) is due to $\|\cdot\|^2$ is a convex function. From 2), $\bd^k$ is always updated by  (\ref{d-k-nonsingular}) for sufficiently large $k$. Therefore, we have
\allowdisplaybreaks \begin{eqnarray}
\ell_*  \|\bx^{k}_{ T_{k} }-\bx^*_{ T_{k} }+\bd^{k}_{ T_{k} }\|
&\overset{(\ref{sequation-k-0})}{=}&\ell_*  \|H_k^{-1}(\nabla^2_{T_{k} \overline T_{k}  } f (\bx ^k)\bx^k_{\overline T_{k} } -g^k_{T_{k} }  )+\bx^{k}_{ T_{k} }-\bx^*_{ T_{k} }\|\nonumber\\
&\leq& \|\nabla^2_{T_{k} : } f (\bx ^k)\bx^k -g^k_{T_{k} }  -H_{k} \bx^*_{ T_{k} } \|\nonumber\\
&\overset{(\ref{F0-s-1})}{=}& \|\nabla^2_{T_{k} : } f (\bx ^k)\bx^k
- g^k_{T_{k}}-\nabla_{{T_{k}}:}^{2} f (\bx^k)\bx^* +\nabla_{T_{k} } f (\bx ^*) \|\nonumber\\
&\overset{(\ref{facts-0-3})}{=}& \|\nabla_{{T_{k} }:}^{2} f (\bx^k)(\bx^{k}-\bx^*)-\int_0^1\nabla_{{T_{k}}{:}}^{2} f ( \bx(t))(\bx ^k-\bx )dt\|\nonumber\\
&=&\|\int_0^1[\nabla_{{T_{k} }:}^{2} f (\bx^k)-\nabla_{{T_{k} }{:}}^{2} f ( \bx(t))](\bx ^k-\bx ^*)dt\|\nonumber\\
&\leq& \int_0^1\|\nabla_{{T_{k} }:}^{2} f (\bx^k)-\nabla_{{T_{k} }{:}}^{2} f ( \bx(t))\|_2\|\bx ^k-\bx ^*\|dt\nonumber\\
&\overset{(\ref{facts-0-4})}{\leq}&M_*\|\bx ^k-\bx ^*\|^2 \int_0^1(1-t)dt\nonumber\\
\label{facts-7} &\leq&  0.5M_*\|\bx ^k-\bx ^*\|^2.
\end{eqnarray}
It follows from $\bd^k_{\overline T_{k}}=-\bx^k_{\overline T_{k}}$ and (\ref{F0-s-1})  that $\|\bx^k+\bd^k-\bx^*\| = \|\bx^k_{T_{k} }+\bd^k_{T_{k} }-\bx^*_{T_{k} }\|$ and thus
\begin{eqnarray}\label{first-fact} \frac{\|\bx^k+\bd^k-\bx^*\|}{\|\bx^k-\bx^*\|} = \frac{\|\bx^k_{T_{k} }+\bd^k_{T_{k} }-\bx^*_{T_{k} }\|}{\|\bx^k-\bx^*\|}
 \overset{( \ref{facts-7})}{\leq} \frac{M_*\|\bx ^k-\bx ^*\|^2}{2\ell_* \|\bx^k-\bx^*\|}\rightarrow 0. \end{eqnarray}
Now we have three facts: (\ref{first-fact}), $\bx^k\rightarrow\bx^*$ from 1), and
$ \langle \nabla f(\bx^{k}), \bd^{k}\rangle  \leq -  \rho\| \bd^{k}\|^2 $ from \Cref{descent-direction}, which  together with \cite[Theorem 3.3]{facchinei1995minimization}
 allow us to claim that  eventually the step size $\alpha_k$ determined by the Armijo rule is 1,
 namely $\alpha_k=1$. Then it follows from (\ref{facts-0-5-1}) that
  \begin{eqnarray}
\label{facts-8}\|\bx^{k+1}-\bx^*\|^2&\overset{( \ref{facts-0-5-1})}{\leq}&(1-\alpha_k)\| \bx^{k}_{ T_{k} }-\bx^*_{ T_{k} }\|^2+\alpha_k \|\bx^{k}_{ T_{k} }-\bx^*_{ T_{k} }+\bd^{k}_{ T_{k} }\|^2\nonumber\\
&=&  \|\bx^{k}_{ T_{k} }-\bx^*_{ T_{k} }+\bd^{k}_{ T_{k} }\|^2
\overset{( \ref{facts-7})}{\leq} (0.5M_*/{\ell_* })^2\|\bx ^k-\bx ^*\|^4.
\end{eqnarray}
Namely, the sequence converges quadratically, 
which completes the whole proof.\qed
\end{proof}
\section{Numerical Experiments}\label{Sec: numerical exp}
\noindent In this part, we will conduct extensive numerical experiments of our algorithm \nzno\
by using MATLAB (R2019a) on a laptop of 32GB memory and Inter(R) Core(TM) i9-9880H 2.3Ghz CPU for solving the CS problems and the sparse linear complementarity problems.
\subsection{Implementation of \nzno}\label{imp}

\noindent We  initialize  \nzno\ with $\bx^0=0$ so that $\widetilde T_0$ in \eqref{Tk} is non-empty if  $\lambda\in(0,\underline\lambda)$.  first need to set up The halting conditions is set up as follows. 

{\bf (a) Halting conditions.} If a point $\bx^k$ satisfies $\supp(\bx^k)\subseteq T_k=T_{k-1}$, $\nabla_{ T_k}  f (\bx^k)=0$ and $\bx^k_{\overline{ T }_k}=0$, then similar reasoning to prove \cref{whole-sequence-converge} 2) allows us to show it is necessary a $\tau$-stationary point of \eqref{L0O} with  $0<\tau<\min_{i}\{|x_i^k|/(2\lambda), i\in\supp(\bx^k)\}.$  Therefore, it makes sense to terminate \nzno\ at $k$th step if it meets one of following conditions: I) $k$ reaches the maximum number  of iterations (e.g., 2000) or  II) $\supp(\bx^k)\subseteq T_k=T_{k-1}$ and $\| F_{\tau_k} (\bx^k; T_k) \|\leq 10^{-6}$.

{\bf (b) Selection of parameters.}   We fix $\sigma=5\times10^{-5}$ and $ \beta=0.5$. While for $\lambda$, $\delta$ and $\tau$,  the empirical numerical experience have indicated a better strategy is to update them adaptively. Note that conditions in   \Cref{converge-rate} are sufficient but not necessary. Therefore, there is no need to set parameters strictly meeting them in practice. 

More precisely, \Cref{converge-rate} states any positive $\delta \in(0,\min \{1,\ell \})$  is acceptable, but in practice to guarantee more steps with Newton directions, it is suggested to be relatively small \cite{de1996semismooth,facchinei1997nonsmooth}.
On the other side,  the condition  $0<\tau<\overline{\tau}\leq2\overline{\alpha} \delta  \beta/(nL^2)$ from (\ref{alpha-r}) 
suggests $\tau$ should  be small enough if $\delta$ is chosen to be  small. However,  $\widetilde T_{k}$ would not vary too much in \eqref{Tk} if a sufficiently small $\tau$ is selected at the beginning. This might push \nzno\ to fall in a local area rapidly,
which clearly degrades the performance of the algorithm. So, we set
 $$\delta:=\delta_k=\left\{\begin{array}{ccc}
                      10^{-10}, & {\rm if}&  S_{k} =\emptyset,  \\
                       10^{-4}, &   {\rm if}& S_{k}\neq \emptyset.
                     \end{array}
 \right.$$
%This can be explained by \Cref{remark-1}. If $T_{k} = T_{k-1}$,  there is no restriction on $\tau$ by $\delta$, which means $\tau$ is not necessary to be sufficiently small. Otherwise, $\delta_k=10^{-4}$ would not result in a small $\tau$, which benefits for finding the support set $T_k$.
In spite of that \Cref{converge-rate} has given us a clue to choose $0<\tau<\overline{\tau}$, it is still difficult to fix a proper one since ${L}$ is not easy to compute in general.  An alternative is to update $\tau$ adaptively. Typically, we use the following rule: starting $\tau$ with a fixed scalar $\tau_0$ (e.g., $\tau_0=1/2$ if no extra explanations are given) and then  update it as,
\begin{eqnarray*}
%\tau_0&=&\max\{1,10/{\rm ln}(n)\},\\
\tau_{k+1}&=&\left\{
\begin{array}{ll}
\tau_k/1.25,&\text{if}~k/10=\lceil k/10\rceil~  \text{and}\ \|F_{\tau_k}(\bx^{k};T_{k})\|>  k^{-2}, \\
\tau_k 1.25,&\text{if}~k/10=\lceil k/10\rceil~  \text{and}\ \|F_{\tau_k}(\bx^{k};T_{k})\|\leq  k^{-2}, \\
\tau_k,&\text{otherwise}.
\end{array}
\right.
\end{eqnarray*}

{\bf (c) Tuning $\lambda$.} It is suggested to set $\lambda\in(0,\underline \lambda)$ in  \Cref{framework-SNSCO} to avoid a trivial solution $0$, where $\underline \lambda$ is given by \eqref{lamda_upper}. However, $\underline \lambda$ might incur a very small $\lambda$ and thus a big size $|\widetilde T_{k}|$ by \eqref{Tk}. Note that the complexity of  deriving the Newton direction by \eqref{d-k-nonsingular} is at least about $O(|\widetilde T_{k}|^3)$. Therefore,   a small $\lambda$ not only increases the computational complexity but also results in a solution that is not sparse enough. On there other hand, as mentioned in \cref{lambda-upper-lower}, a too big value of $\lambda$ (e.g. $\lambda>\overline{\lambda}$ defined in \eqref{lamda_upper}) would result in a trivial solution $0$. To balance these two aspects, we start with a slightly bigger $\lambda_0:= \max\{\underline \lambda, c \overline \lambda\}$ and gradually reduce it by $\lambda_k=r\lambda_{k-1}$, where $r,c\in(0,1]$. We pick $r=0.75$ and $c=0.5$ in our numerical experiments if no extra explanations are provided. 
\begin{figure}[!t]
\centering
  \includegraphics[width=1\linewidth]{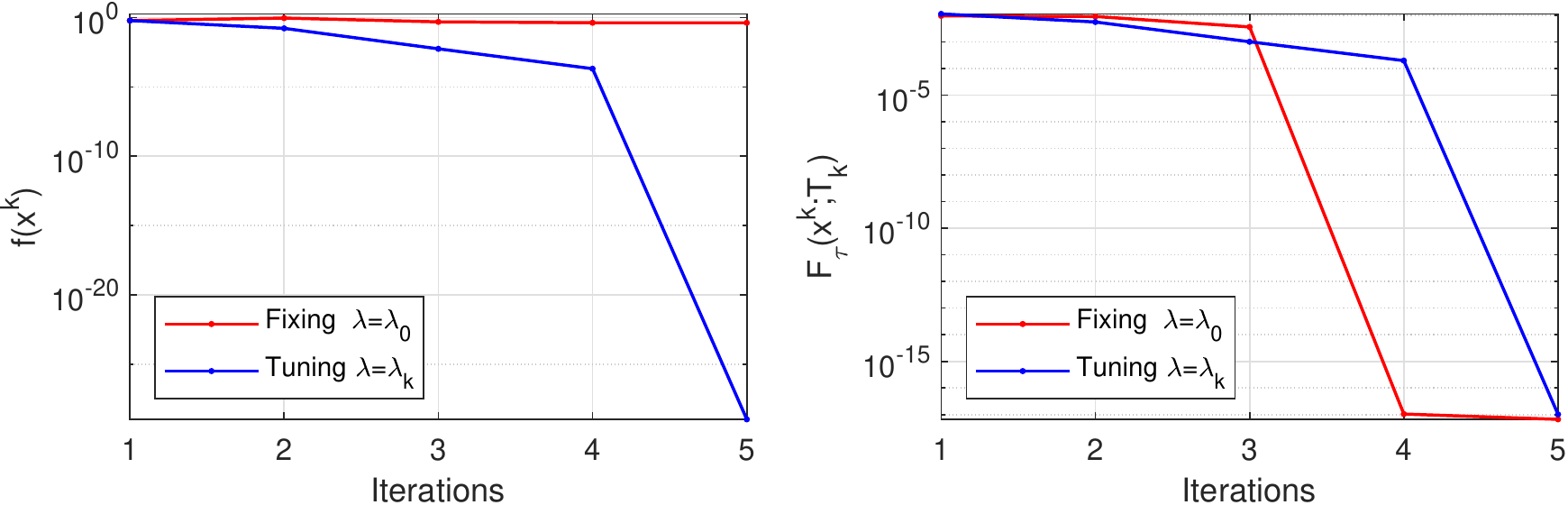}\\\vspace{3mm}
  \includegraphics[width=1\linewidth]{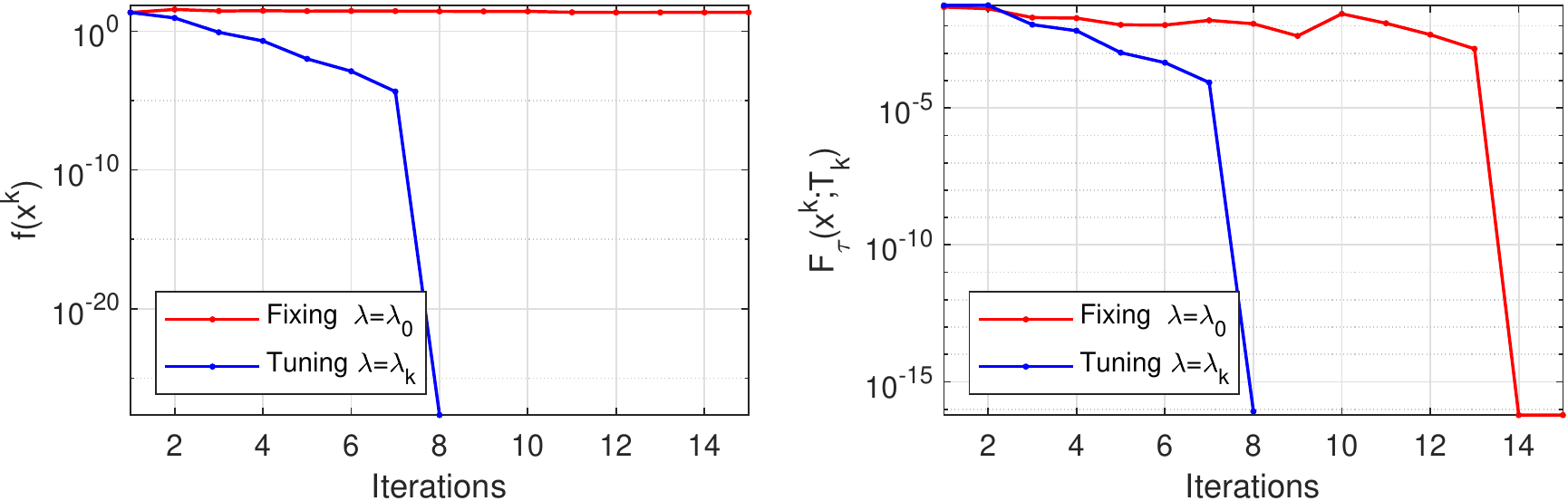} 
\caption{Two strategies for setting $\lambda$ in NL0R for solving \Cref{cs-ex}. The sub-figures in the top (bottom) row are produced by NL0R under $s_*=100$ ($s_*=500$). }  
\label{fig:tau-error}
\end{figure}

To see the performance of NL0R under fixing $\lambda=\lambda_0$ or updating $\lambda=\lambda_k$,  two instances of \Cref{cs-ex}  are tested and according results are shown in \cref{fig:tau-error}. It can be clearly seen that $\| F_{\tau_k} (\bx^k; T_k) \|$ declines dramatically for both fixing $\lambda=\lambda_0$ and updating $\lambda=\lambda_k$, indicating NL0R enjoys a quadratic convergence property. While the objective $f(\bx^k)$ produced by NL0R under fixing $\lambda=\lambda_0$ stabilizes at a level, which means it achieves a local minima. By contrast, NL0R under updating $\lambda=\lambda_k$ delivers the objective $f(\bx^k)$ that drops down sharply and approaches to a globally optimal value. Therefore, the updating rule makes NL0R perform better and thus is adopted to proceed with our numerical comparisons in the sequel.

\subsection{Compressed  {sensing}}
\noindent CS  has seen revolutionary advances both in theory and
 {algorithm} over the past decade. Ground-breaking papers that pioneered the advances are \cite{donoho2006compressed, candes2006robust,candes2005decoding}.
% The model related to CS we are interested here is
%\begin{equation}\label{CS}
% f(\bx)=0.5\|A\bx-\by\|^2,
%\end{equation}
%where $A\in\R^{m\times n}$ is the so-called sensing matrix and $\by\in\R^{m}$ is the measurement.
We will focus on two types of data: the randomly generated data and the 2-dimensional image data. For the first data, we consider the exact recovery $\by=A \bx$, where the sensing matrix $A$  chosen as in \cite{yin2015minimization,zhou2016null}. While for the image data, we consider the inexact recovery $\by=A \bx +\xi$, where $\xi$ is the noise and $A$ will be described in \Cref{cs-imag}.
\begin{example}[Random data]\label{cs-ex} Let $A\in\mathbb{R}^{m\times n}$  be a random Gaussian matrix  with each column   being identically and independently distributed  (iid) samples of the standard normal distribution. We then normalize each column to be a unit length. Next, the $s_*$ non-zero components of the `ground  truth' signal $\bx^*$ are also iid samples of the standard normal distribution, and their locations are picked randomly. Finally, the measurement  is given by $\by=A \bx^*$. %Here $s_*$ is the sparsity level of the  signal $\bx^*$.  
\end{example}

\begin{example}[2-D image data]\label{cs-imag} Some images are naturally not sparse themselves but could be sparse under some wavelet transforms. Here, we take advantage of the  Daubechies wavelet 1, denoted as $W(\cdot)$.  Then images under this transform (i.e., $x^*:=W(\omega)$) is sparse,  $\omega$ be the vectorized intensity of an input image. Because of this, the explicit
form of  the sampling matrix may not be available. We consider a sampling matrix taking the form $A=FW^{-1}$, where $F$ is the partial fast Fourier transform, and $W^{-1}$ is the inverse of $W$. Finally, the added noise $\xi$  has each element $\xi_i\sim {\tt nf }\cdot\mathcal{N}$ with $\mathcal{N}$ being the standard normal distribution and ${\tt nf }$ being the noise factor. Three typical choices of ${\tt nf }$ are taken into account, namely ${\tt nf }\in\{0.01,0.05,0.1\}$. For this experiment, we compute  a gray image (see the original image in  $\Cref{fig:image-1}$) with size $512\times512$ (i.e. $n=512^2=262144$) and the sampling size $m=20033$ and $29729$ respectively.

% For this experiment, we compute two images.
%\begin{itemize}
%%\item[{\tt Img1}:] A gray image (see the original image in  $\Cref{fig:image-2}$) with size $256\times256$ (i.e. $n=256^2=65536$). The sampling size is $m=7417$ and $9793$.
% \item  {\tt Img1}: A gray image (see the original image in  $\Cref{fig:image-1}$) with size $512\times512$ (i.e. $n=512^2=262144$). The sampling size is $m=20033$ and $29729$.
% \item {\tt Img2}: A color image (see the original image in  $\Cref{fig:image-1-color}$) with size $256\times256\times3$ (i.e. $n=256^2=65536$). The sampling size is $m=3767$ and $6213$.
%\end{itemize}
\end{example}
%\subsubsection{Effectiveness of {\tt PSS}}First of all, we would like to see the effectiveness   of {\tt PSS} in   \Cref{framework-psd}. We compute two instances from \Cref{cs-ex} with $n = 100, m=25$. One has the solution being $6$-sparse and another is 8-sparse. As shown in \Cref{fig:T68}, where points on lines are correct indices, \nzno\ is able to identify the true support within a few iterations. %More detailed, in Fig \ref{fig:T6}, at the first 9 iterations, only $4$ indices were taken, namely $|T_k|=4$, and then
%Actually, as long as  the true support set is found, it  stops  very quickly, usually with a couple of steps (2 steps for both instances). Note that, for example in Fig \ref{fig:T6}, at step 1, index 11 is a false one since 11 does not belong to the true support set $\{18,35,67,70,81,96\}$. This means  \nzno\ is able to correct $T_k$.}
%\begin{figure}
%\centering
%\begin{subfigure}{0.49\textwidth}
%  \includegraphics[width=.9\linewidth]{T6.pdf}
%\caption{True sparsity level  $6$}
%\label{fig:T6}
%\end{subfigure}
%\begin{subfigure}{0.49\textwidth}
%  \includegraphics[width=.9\linewidth]{T8.pdf}
% \caption{True sparsity level $8$}
%\label{fig:T8}
%\end{subfigure} 
%\caption{$T_k$ at $k$th iteration for \Cref{cs-ex} with $n = 100, m=25$.} 
%\label{fig:T68}
%\end{figure}
\subsubsection{Comparisons for random data}
\noindent Since a large number of state-of-the-art methods have been proposed to solve the CS problems, it is far beyond our scope to compare all of them. To make comparisons fair, we  only focus on those algorithms (often referred as regularized methods) which aim at solving (\ref{L0O}) or its relaxations, where $\ell_0$ norm is replaced by some approximations such as  $\ell_q (0<q\leq1)$  \cite{lai2013improved} or $\ell_1-\ell_2$ \cite{lou2018fast}.  Note that greedy methods mentioned in Subsection \ref{subsec:lit-rev}, for the model \eqref{SCO} with $s$ being given, have been famous for the super-fast computational speed  and the high order of accuracy when $s$ is relatively small to $n$.  However, we will not compare them with \nzno\ since we would like to consider the scenario when $s$ is unknown. We select MIRL1 \cite{zhou2016null}, AWL1 \cite[ADMM for weighted $\ell_{1-2}$]{lou2018fast}  which is a faster approximation of the method proposed in \cite{yin2015minimization}, IRSLQ  \cite{lai2013improved} (we choose $q=1/2$) and PDASC \cite{jiao2015primal}. All parameters are set as default except for setting the maximum iteration number as 100 and removing the final refinement step for MRIL1 and \texttt{del=1e-8} for PDASC. Note that PDASC and \nzno\ are the second-order methods and the other  three belong to the category of the first-order methods.

% We begin with running $500$ independent trials with fixind $n=256, m=\lceil n/4\rceil$ and then report the corresponding success rates (which is defined by the percentage of the number of successful recoveries over all trails) at sparsity levels $s_*$ from 10 to 36. From \Cref{fig:SuccRate-s}, one can observe that MIRL1, \nzno\ and IRSLQ basically generate similar results for each sparsity level, but all outperform AWL12 and PDASC. Moreover, all lines  decline along with the rising of $s_*$, which indicates that the CS problems become  more difficult to get recovered successfully.
%Our next experiment is to see how sample size $m$ affects the performance of those methods. We  run $500$ independent trials with fixing  $n=256, s=13$ but varying $m=\lceil r n\rceil$ where $r\in\{0.1,0.12,\cdots,0.3\}$. Obviously,  the larger $m$ is, the easier the problem is  to be solved, which is illustrated by \Cref{fig:SuccRate-mn}.  Again MIRL1, \nzno\ and IRSLQ  behave better than  the other  two.

%% % % % % % % % % % % % % % % % % % % % % % % % % % % % % % % %
%\begin{figure}
%\centering
%\begin{subfigure}{0.49\textwidth}
%  \includegraphics[width=.9\linewidth]{Successrates.pdf}
%\caption{Success rates v.s. $s_*$ with $m=64$ }
%\label{fig:SuccRate-s}
%\end{subfigure}
%\begin{subfigure}{0.49\textwidth}
%  \includegraphics[width=.9\linewidth]{Successratemn.pdf}
% \caption{Success rates v.s. $m/n$ with $s_*=13$}
%\label{fig:SuccRate-mn}
%\end{subfigure} 
%\caption{Success rates of five methods for solving \Cref{cs-ex} with $n = 256$.}  
%\label{fig:SuccRate}
%\end{figure}

To see the accuracy of the solutions and the speed of these five methods, we run 20 trials with medium dimensions $n$ increasing from 10000 to 30000 and keeping $m=\lceil 0.25n\rceil, s_*=\lceil 0.01n\rceil$ or $s_*=\lceil 0.05n\rceil$. Average results  are reported in \Cref{fig:aver-error-time}, where $s_*=\lceil 0.01n\rceil$, and \cref{tab:cs-1}, where $s_*=\lceil 0.05n\rceil$. As shown in \Cref{fig:aver-error-time},
\nzno\ always  generates  the smallest $\|\bx-\bx^*\|$, the most accurate recovery, with accuracy order at least $10^{-14}$, followed by PDSAC.  By contrast, the other three methods  {get} accuracy with the order being above $10^{-5}$. This phenomenon well testifies that the second-order methods have their advantages in producing a higher order of accuracy. When it comes to the computational speed, it can be clearly seen   that \nzno\ always runs the fastest, with only consuming about 2 seconds when $n=30000$.
PDSAC  is  the runner up. This shows that, for problems in higher dimensions,   \nzno\ and PDSAC are able to run  faster than the first-order methods. Similar observations can be seen in \cref{tab:cs-1}. In a nutshell, \nzno\ delivers  the most accurate recovery within the shortest computational time.

\begin{figure}[!b]
\centering
\begin{subfigure}{0.495\textwidth}
  \includegraphics[width=.9\linewidth]{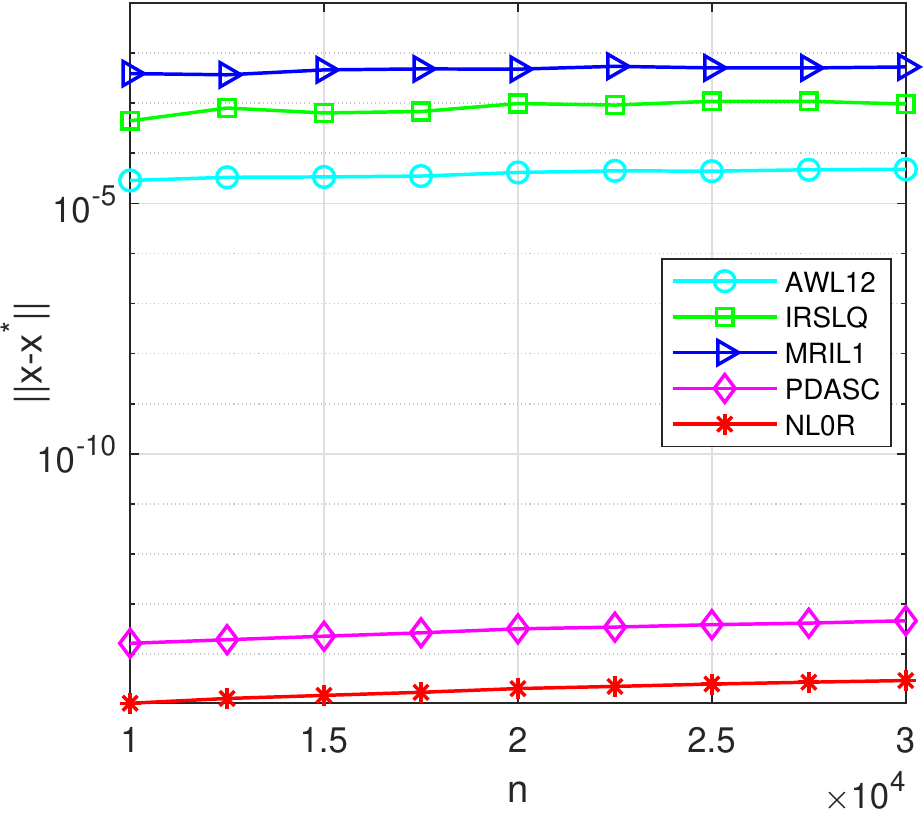}
%\caption{$\|\bx-\bx^*\|$ v.s. $n$  }
%\label{fig:aver-error}
\end{subfigure}
\begin{subfigure}{0.49\textwidth}
  \includegraphics[width=.9\linewidth]{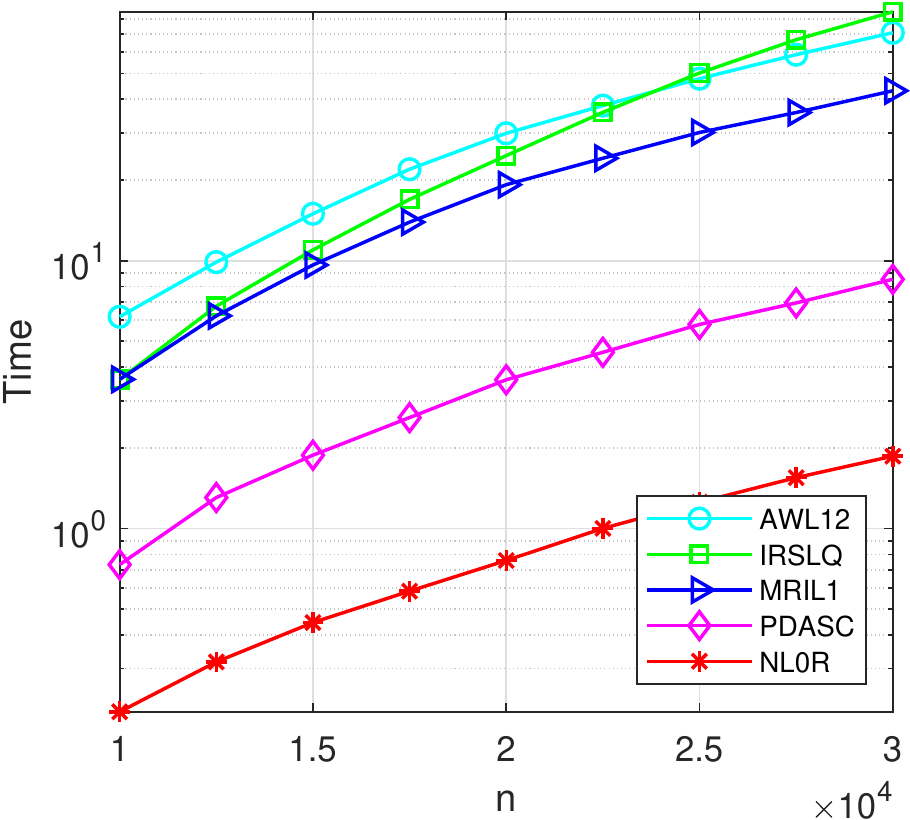}
% \caption{CPU time v.s. $n$}
%\label{fig:aver-time}
\end{subfigure} 
\caption{Average recovery error and time of five methods for solving \Cref{cs-ex}.} 
\label{fig:aver-error-time}
\end{figure}

\begin{table}[!th]
 \caption{Performance of five methods for \Cref{cs-ex}. \label{tab:cs-1}}\vspace{-3mm}
{\renewcommand{\arraystretch}{1}\addtolength{\tabcolsep}{-2.9pt}
{\centering
\begin{tabular}{ lccccccccccc} \\ \hline
& \multicolumn{5}{c}{$\|\bx-\bx^*\|$}&& \multicolumn{5}{c}{Time (in seconds)}\\ \cline{2-6}\cline{8-12}
n&10000&15000&20000&25000&30000 &&10000&15000&20000&25000&30000 \\\hline 
AWL12	&	8.39e-05	&	1.10e-04	&	1.21e-04	&	1.32e-04	&	1.43e-04	&	&	17.71 	&	42.70 	&	85.46 	&	133.3 	&	195.4 	\\
RSLQ 	&	3.79e-04	&	4.32e-04	&	4.05e-04	&	3.58e-04	&	6.25e-04	&	&	7.653 	&	23.84 	&	56.38 	&	113.1 	&	189.3 	\\
MRIL1	&	1.57e-02	&	1.96e-02	&	2.48e-02	&	2.63e-02	&	2.54e-02	&	&	4.595 	&	12.00 	&	23.21 	&	36.93 	&	52.36 	\\
PDASC	&	5.36e-14	&	7.81e-14	&	1.07e-13	&	1.33e-13	&	1.59e-13	&	&	0.972 	&	2.290 	&	4.680 	&	7.514 	&	11.12 	\\
SNL0	&	1.16e-14	&	6.58e-15	&	2.37e-14	&	2.96e-14	&	3.55e-14	&	&	0.602 	&	1.363 	&	2.549 	&	4.175 	&	6.303 	\\
\hline	
 \end{tabular}\par} }
\end{table}

\subsubsection{Comparisons for 2-D image data}
\noindent	In \Cref{cs-imag}, data size $n$ is relatively large,
which possibly makes most regularized methods  suffer extremely slow computation. Hence, we  select three greedy methods CSMP (denoted for CoSaMP) \cite{needell2009cosamp}, HTP \cite{foucart2011hard} and  AIHT \cite{blumensath2012accelerated} as well as PDSCA. As suggested in package PDSCA, we set another rule to stop each method  if at $k$th iteration  it satisfies $\|A\bx^k-y\|\leq \|A\bx^*-y\|$ to speed up the termination. Moreover, to make comparisons fair, we fist run PDSCA, which is capable of delivering a solution with a good sparsity level $s$.  Then  we set this sparsity level  $s$ for CSMP, HTP and  AIHT since they need such prior information. Let $\bx$ be a solution produced by a method. Apart from reporting the sparsity level $\|\bx\|_0$ and the CPU time of a method, we also compute the  peak signal to noise ratio (PSNR) defined by
$${\rm PSNR}:=10\log_{10}(n\|\bx-\bx^*\|^{-2})$$
to measure the performance of the method. Note that the  larger  PSNR is,  the much closer $\bx$  approaches to the true image $\bx^*$, namely the better performance of a method yields. Results for  \Cref{cs-imag} are presented in
\Cref{fig:image-1} and \Cref{tab:cs-imag-1} , where
SPDSA offers the biggest PSNR when {\tt nf}$=0.01$, whilst \nzno\ produces the biggest  ones  when {\tt nf}$=0.05$ and {\tt nf}$=0.1$, which means our method is more robust to the noise. In addition, \nzno\  runs the fastest and renders the sparsest representations for most cases. 

\begin{figure}[!th]
\centering 
  \includegraphics[width=.95\linewidth]{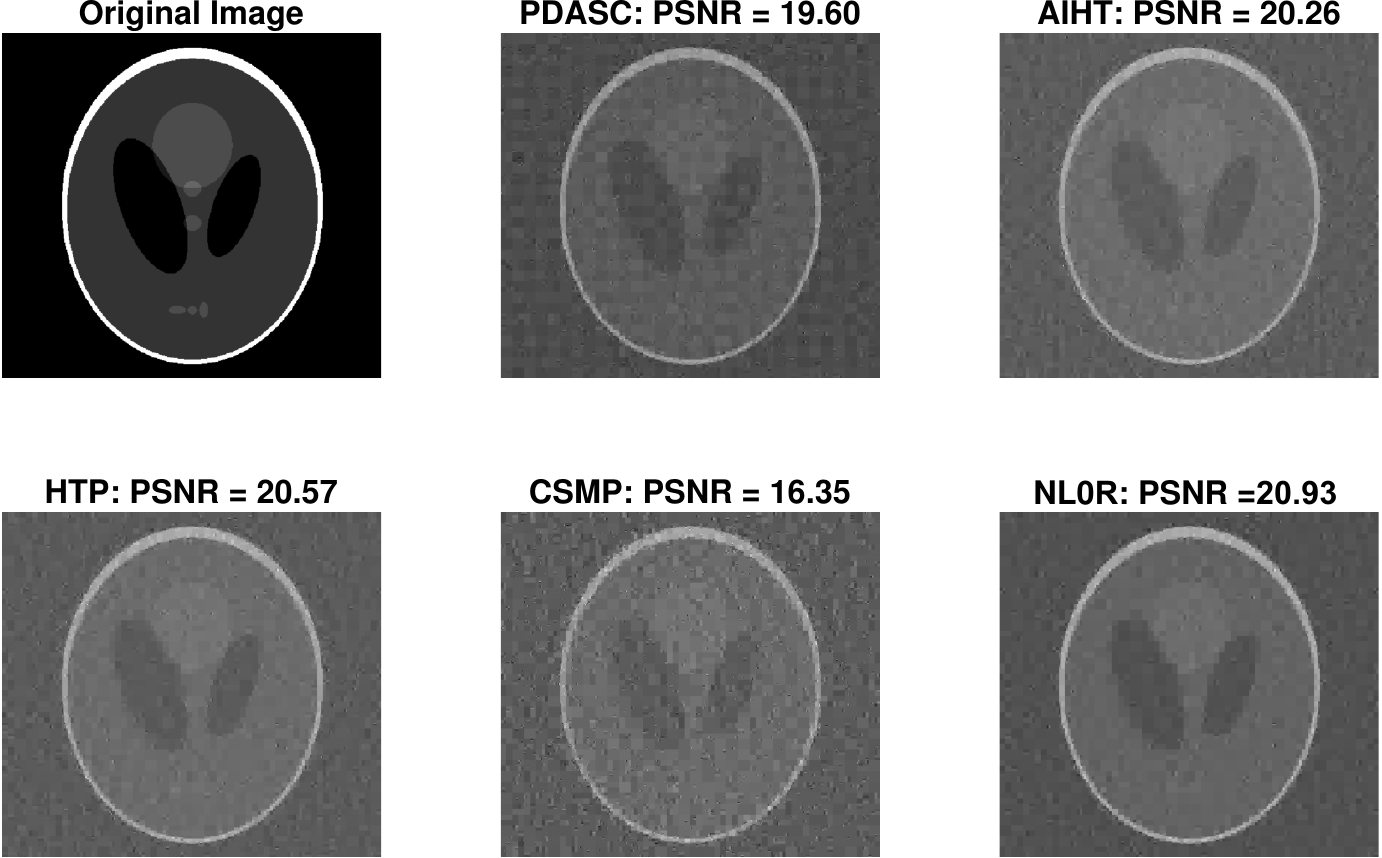}
\caption{Recovery results for  \Cref{cs-imag} with $m=20033$ and {\tt nf}$=0.1$.} 
\label{fig:image-1}
\end{figure}

\begin{table}[!th]
 \caption{Performance of five methods for \Cref{cs-imag}. \label{tab:cs-imag-1}}\vspace{-3 mm}
{\renewcommand{\arraystretch}{1}\addtolength{\tabcolsep}{-1.5pt}
{\centering\begin{tabular}{ lllcc ccccccccc}\\ \hline
&&\multicolumn{3}{c}{ {\tt nf}$=0.01$}    &&\multicolumn{3}{c}{{\tt nf}$=0.05$} &&\multicolumn{3}{c}{{\tt nf}$=0.1$}\\\cline{3-5}\cline{7-9}\cline{11-13}
&&PSNR&Time&$\|\bx\|_0$&&PSNR&Time&$\|\bx\|_0$&&PSNR&Time&$\|\bx\|_0$\\\hline 
%& \multicolumn{11}{c}{$m=20033,~~ n=262144$}\\ \hline
&	SPDSA	&	21.62 	&	15.53 	&	9716	&&	20.11 	&	8.45 	&	5982	&&	19.60 	&	5.72 	&	2969	\\
$m=20033$
&	AIHT	&	19.81 	&	148.5 	&	9716	&&	20.15 	&	2.23 	&	5982	&&	20.26 	&	19.3 	&	2969	\\
$n=262144$
&	HTP	&	19.66 	&	19.15 	&	9716	&&	20.27 	&	3.40 	&	5982	&&	20.57 	&	3.41 	&	2969	\\
&	SCMP	&	12.49 	&	51.54 	&	9716	&&	18.44 	&	63.1 	&	5982	&&	16.35 	&	14.8 	&	2969	\\
&	NL0R	&	23.21 	&	7.130 	&	9690	&&	21.91 	&	4.43 	&	4173	&&	20.93	&	3.07 	&	2803\\\hline	
%& \multicolumn{11}{c}{$m=29729,~~ n=262144$}\\ \hline
&SPDSA	&	35.37 	&	11.54 	&	9902	&&	25.07 	&	6.58 	&	5002	&&	22.61 	&	5.44 	&	3513	\\
$m=29729$
&	AIHT	&	32.21 	&	71.42 	&	9902	&&	24.78 	&	9.52 	&	5002	&&	23.07 	&	9.16 	&	3513	\\
$n=262144$&	HTP	&	34.89 	&	14.38 	&	9902	&&	25.14 	&	4.57 	&	5002	&&	23.19 	&	2.02 	&	3513	\\
&	SCMP	&	21.48 	&	39.79 	&	9902	&&	23.00 	&	9.94 	&	5002	&&	20.73 	&	2.26 	&	3513	\\
&	NL0R	&	33.59 	&	6.761 	&	8787	&&	25.31 	&	3.99 	&	 3885	&&	23.23 	&	2.58 	&	2641	\\\hline
 \end{tabular}\par} }
\end{table}

\subsection{Sparse linear complementarity problem}
\noindent Sparse linear complementarity problems have been applied into dealing with real-world applications such as bimatrix games and portfolio selection problems \cite{CPS92,XHZ08, SZX14}. The problem aims at finding a sparse vector $\bx\in\R^n$ from $\Omega:=\{\bx\in\R^n:~\bx\geq0,\  M\bx+q \geq0, \ \langle \bx, M\bx+q\rangle=0 \},$
where $M\in\R^{n\times n}$ and $q\in\R^{n}$. A point $\bx\in \Omega$ is equivalent to
\begin{eqnarray}\label{sco-obj}
f(x)&:=&\sum_{i=1}^n \phi(x_i,M_i\bx+q_i)=0,
\end{eqnarray}
where $\phi$ is the so-called NCP function, which is defined by $\phi(a,b)=0$ if and only if $a\geq0,b\geq0,ab=0$. We take advantage of an NCP function $\phi(a,b)=a_+^2b_+^2+(-a)_+^2+(-b)_+^2$, where $a_+:=\max\{a,0\}$, and a testing example from \cite{zhou2020newton}.
  \begin{example}\label{sdp-matrix}   Let $M=ZZ^\top$ with $Z\in \R^{n\times m}$ and $m\leq n$ (e.g. $m=n/2$). Elements of $Z$ are  iid samples from the standard normal distribution. Each column is then normalized to have a unit length. The `ground truth' sparse solution $\bx^*$ with a sparsity level $s_*$ is produced the same as in \cref{cs-ex} and $q$ is obtained by
$q_i= -(M \bx^*)_i$ if  $x_i^*>0$ and $q_i=|(M \bx^*)_i|$ otherwise.
 \end{example}
  % \begin{example}\label{sdp-matrix-no}   Let $M$ be generated the  same as in \cref{sdp-matrix}.   Then, for a randomly selected subset $T\subseteq\N_n$ with $|T|=s_*$, $q_i$ is randomly picked from the interval $(-1, 0)$ for  $i\in T$ and   from the interval $(0,1)$ for  $i\in \overline T$. This example does not have a `ground truth' sparse solution, but may admit a sparse one. \end{example}
% \subsubsection{Effect to $\lambda_0$}
%\noindent We first run an experiment with $n=500$ and $s_*=25$ to see how initial guess $\lambda_0:= \max\{\underline \lambda, c \overline \lambda\}$ would affect the performance of \nzno. To proceed this, we select $c$ from $\{0.1,0.2,\cdots,1\}$.  Average results over 100 trials are presented in \Cref{fig:s-1}, where the bigger $c$ is, the more number of the iterations is and the longer the computational time is taken. In addition,  NL0R$_{\rm MAX}$  performs better  than NL0R$_{FB}$, since the former runs faster and renders more accurate recovery even though it requires more number of iterations.
%\subsubsection{Comparisons for \cref{sdp-matrix}}

Since there are very few methods that have been proposed to process the sparse LCP, we only select two solvers: the half-thresholding projection (HTP) method \cite{shang2015extragradient} and LEMKA's method (LEMKE\footnote{\url{http://ftp.cs.wisc.edu/math-prog/matlab/lemke.m}}). We alter the sample size $n$ but fix $m=n/2, s_*=0.01n$ and $s_*=0.05n$. Average results over 20 trials are reported in \cref{fig:s-1} where $s_*=0.01n$ and \cref{tab:lcp} where $s_*=0.05n$. Comparing with HTP, LEMKE and NL0R produce much more accurate solutions since their obtained objective function values $f(\bx)$ and the recovered accuracy $\|\bx-\bx^*\|$ almost tend  to zero. When it comes to the computational speed, the picture is significantly different. As shown in \cref{fig:s-1}, NL0R runs super-fast, followed by LEMKE, and HTP comes the last. Similar observations can be seen in \cref{tab:lcp}, where for the case of $n=20000$, NL0R only consumes about 8.826 seconds  while  LEMKE takes 531.1 seconds and HTP needs 207.9 seconds. Therefore, NL0R evidently outperforms the others in the high dimensional settings.

 \begin{figure}[H]
\centering
\begin{subfigure}{0.325\textwidth}
  \includegraphics[width=1\linewidth]{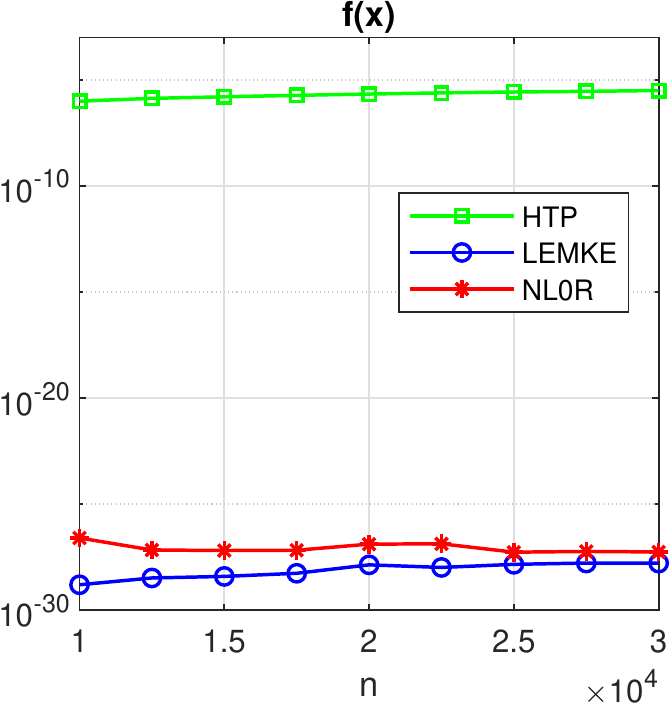}
%\caption{$\|\bx-\bx^*\|$ v.s. $n$  }
%\label{fig:aver-error}
\end{subfigure}
\begin{subfigure}{0.325\textwidth}
  \includegraphics[width=1\linewidth]{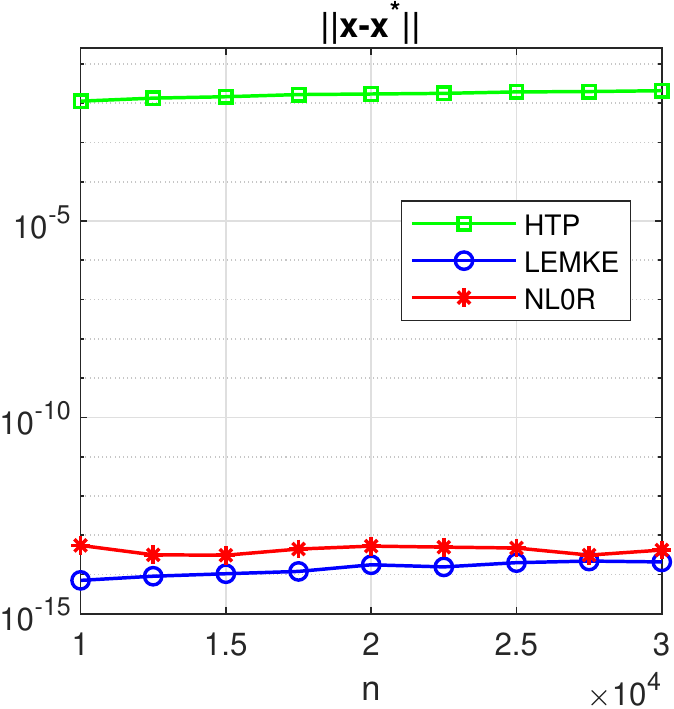}
% \caption{CPU time v.s. $n$}
%\label{fig:aver-time}
\end{subfigure} 
\begin{subfigure}{0.325\textwidth}
  \includegraphics[width=.95\linewidth]{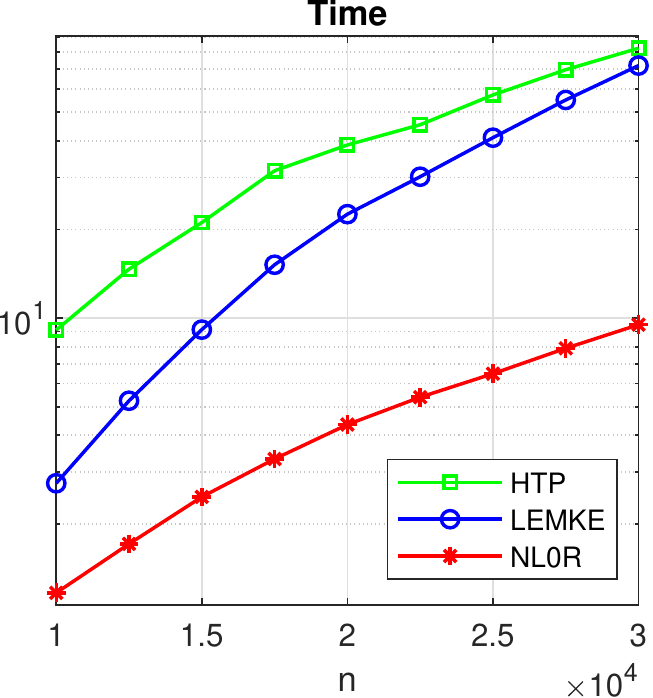}
% \caption{CPU time v.s. $n$}
%\label{fig:aver-time}
\end{subfigure} 
\caption{Performance of \nzno\ effected by $\lambda_0$ for solving \Cref{sdp-matrix}.} 
\label{fig:s-1}
\end{figure}

% \begin{figure}[H]
%\centering
%\includegraphics[width=1\linewidth]{lcp-n-eps-converted-to.pdf}
%\caption{Performance of \nzno\ effected by $n$ for solving \Cref{sdp-matrix}.} 
%\label{fig:lcp-n}
%\end{figure}

\begin{table}[!th]
 \caption{Performance of three methods for \Cref{sdp-matrix}. \label{tab:lcp}}\vspace{-3mm}
{\renewcommand{\arraystretch}{1}\addtolength{\tabcolsep}{-2.9pt}
{\centering
\begin{tabular}{ lccccccccccc} \\ \hline
& \multicolumn{3}{c}{$f(\bx)$}&&\multicolumn{3}{c}{$\|\bx-\bx^*\|$}&& \multicolumn{3}{c}{Time (in seconds)}\\ \cline{2-4}\cline{6-8}\cline{10-12}
$n$	&	HTP	&	LEMKE	&	NL0R	&	&	HTP	&	LEMKE	&	NL0R	&	&	HTP	&	LEMKE	&	NL0R	\\\hline
5000	&	2.52e-06	&	1.15e-28	&	1.94e-27	&	&	5.55e-02	&	2.05e-14	&	6.46e-14	&	&	11.83 	&	7.911 	&	0.581 	\\
7500	&	4.20e-06	&	3.22e-28	&	3.84e-28	&	&	7.04e-02	&	3.15e-14	&	4.01e-14	&	&	27.69 	&	27.14 	&	1.240 	\\
10000	&	5.38e-06	&	7.21e-28	&	3.33e-27	&	&	8.36e-02	&	4.58e-14	&	9.88e-14	&	&	50.71 	&	64.64 	&	2.216 	\\
12500	&	6.76e-06	&	9.06e-28	&	4.00e-28	&	&	8.87e-02	&	5.10e-14	&	3.18e-14	&	&	79.96 	&	127.7 	&	3.434 	\\
15000	&	7.99e-06	&	1.18e-27	&	8.83e-28	&	&	9.86e-02	&	6.80e-14	&	6.07e-14	&	&	114.7 	&	221.1 	&	4.994 	\\
17500	&	9.30e-06	&	2.19e-27	&	8.37e-28	&	&	1.08e-01	&	7.69e-14	&	4.23e-14	&	&	158.4 	&	354.0 	&	6.862 	\\
20000	&	1.12e-05	&	3.22e-27	&	9.71e-27	&	&	1.18e-01	&	1.10e-13	&	2.72e-13	&	&	207.9 	&	531.1 	&	8.826 	\\

\hline	
 \end{tabular}\par} }
\end{table}

\section{Conclusion}
\noindent A vast body of work has developed numerical methods that only make use of the first-order information of the involved functions. Because of this, they are able to run fast but suffer from slow convergence. When Newton steps are integrated into some of these methods, then much more rapid convergence can be achieved. To the best of our knowledge, the current theoretic guarantees include two groups: either the (sub)sequence converges to a stationary point of $\ell_0$-regularized optimization or the distance between each iterate and any given sparse reference point is bounded by an error bound in the sense of probability.  However, those do not thoroughly reveal the reasons why those methods with Newton steps perform exceptionally well. In this paper,  we designed a Newton-type method for the $\ell_0$-regularized optimization and proved that the generated sequence converges to a stationary point globally and quadratically. This well explains such a method is expected to enjoy an appealing performance from the theoretical perspective, which was testified by the numerical experiments where it is capable of rendering a relatively high order of accuracy with fast computational speed.

%\appendix
%\section{An example appendix}
%\lipsum[71]
%
%\begin{lemma}
%Test Lemma.
%\end{lemma}

\section*{Acknowledgements}
\noindent This work was funded by the the National Science Foundation of China (11971052, 11801325, 11771255) and Young Innovation Teams of Shandong Province (2019KJI013).

\bibliographystyle{abbrv}
\bibliography{references}
\end{document}